\newtheorem{theorem}{Theorem}[section]
\newtheorem{lemma}[theorem]{Lemma}
\newtheorem{corollary}[theorem]{Corollary}
\newtheorem{proposition}[theorem]{Proposition}
\theoremstyle{definition}
\newtheorem{definition}[theorem]{Definition}
\newtheorem{example}[theorem]{Example}
\theoremstyle{remark}
\newtheorem{remark}[theorem]{Remark}
\newcounter{tempenum}
\newcommand{\Q}{\mathbb{Q}}
\newcommand{\ra}{\rightarrow}
\newcommand{\Z}{\mathbb{Z}}
\newcommand{\F}{\mathbb{F}}
\newcommand{\tors}{\operatorname{tors}}
\newcommand{\AV}{\textup{\textsf{AV}}}
\newcommand{\pp}{\mathfrak{p}}
\newcommand{\qq}{\mathfrak{q}}
\newcommand{\GL}{{\rm GL}}
\newcommand{\CC}{\mathbb{C}}
\newcommand{\PP}{\mathbb{P}}
\newcommand{\Fbar}{\overline{\F}}
\newcommand{\kbar}{\overline{k}}
\newcommand{\Qbar}{\overline{\Q}}
\newcommand{\calC}{\mathcal{C}}
\newcommand{\calD}{\mathcal{D}}
\newcommand{\calO}{\mathcal{O}}
\newcommand{\OO}{\mathscr{O}}
\newcommand{\directsum}{\oplus}
\newcommand{\Directsum}{\bigoplus}
\newcommand{\Galois}{\mathcal{G}}
\newcommand{\isom}{\simeq}
\newcommand{\longisomto}{\stackrel{\sim}{\To}}
\newcommand{\intersect}{\cap}
\newcommand{\Intersection}{\bigcap}
\newcommand{\injects}{\hookrightarrow}
\newcommand{\surjects}{\twoheadrightarrow}
\newcommand{\tensor}{\otimes}
\newcommand{\boldalpha}{\boldsymbol{\alpha}}
\newcommand{\To}{\longrightarrow}
\DeclareMathOperator{\Aut}{Aut}
\DeclareMathOperator{\Char}{char}
\DeclareMathOperator{\coker}{coker}
\DeclareMathOperator{\End}{End}
\DeclareMathOperator{\Ext}{Ext}
\DeclareMathOperator{\Frac}{Frac}
\DeclareMathOperator{\Gal}{Gal}
\DeclareMathOperator{\Hom}{Hom}
\DeclareMathOperator{\HOM}{\mathscr{H}\!\!\mathit{om}}
\DeclareMathOperator{\Lie}{Lie}
\DeclareMathOperator{\M}{M}
\DeclareMathOperator{\opp}{opp}
\DeclareMathOperator{\Pic}{Pic}
\DeclareMathOperator{\rk}{rk}
\DeclareMathOperator{\Sets}{\textbf{Sets}}
\begin{document}

\title[Abelian varieties isogenous to a power]{Abelian varieties isogenous to a power of an elliptic curve}

\author{Bruce~W.~Jordan}
\address{Department of Mathematics, Baruch College, The City University
of New York, One Bernard Baruch Way, New York, NY 10010-5526, USA}
\email{bruce.jordan@baruch.cuny.edu}

\author{Allan~G.~Keeton}
\address{Center for Communications Research, 805 Bunn Drive, Princeton, 
NJ 08540-1966, USA}
\email{agk@idaccr.org}

\author{Bjorn~Poonen}
\address{Department of Mathematics, Massachusetts Institute of Technology, Cambridge, MA 02139-4307, USA}
\email{poonen@math.mit.edu}
\urladdr{\url{http://math.mit.edu/~poonen/}}

\author{Eric~M.~Rains}
\address{Department of Mathematics, California Institute of Technology,
Pasadena, CA 91125, USA}
\email{rains@caltech.edu}

\author{Nicholas~Shepherd-Barron}
\address{Mathematics Department, King's College London,
Strand, London WC2R 2LS, United Kingdom}
\email{N.I.Shepherd-Barron@kcl.ac.uk}

\author{John~T.~Tate}
\address{Mathematics Department, Harvard University, 1 Oxford Street,
Cambridge MA 02138-2901, USA}
\email{Tate@math.utexas.edu}

\thanks{B.P.\ was supported in part by National Science Foundation grant DMS-1069236 and DMS-1601946 and grants from the Simons Foundation (\#340694 and \#402472 to Bjorn Poonen).  Any opinions, findings, and conclusions or recommendations expressed in this material are those of the authors and do not necessarily reflect the views of the National Science Foundation or the Simons Foundation.}

\date{September 20, 2016}  

\begin{abstract} 
Let $E$ be an elliptic curve over a field $k$.
Let $R\colonequals \End E$.
There is a functor $\HOM_R(-,E)$ from
the category of finitely presented torsion-free left $R$-modules
to the category of abelian varieties isogenous to a power of $E$,
and a functor $\Hom(-,E)$ in the opposite direction.
We prove necessary and sufficient conditions on $E$
for these functors to be equivalences of categories.
We also prove a partial generalization in which $E$ is replaced
by a suitable higher-dimensional abelian variety over $\F_p$.
\end{abstract}

\maketitle

\section{Introduction}
\label{S:introduction}

 Let $E$ be an elliptic curve over a field $k$.
Let $R\colonequals \End E$.
We would like to classify all abelian varieties isogenous to a power of $E$.
There is a functor $\HOM_R(-,E)$ that takes as input 
a finitely presented (f.p.)\ left $R$-module $M$ 
and produces a commutative group scheme.
(This functor appears in articles by Giraud~\cite[\S1]{Giraud1968}
and Waterhouse~\cite[Appendix]{Waterhouse1969},
and is attributed by the former to Serre and Tate;
we will give a self-contained exposition 
in Section~\ref{S:functor to an abelian category}.)
We will prove that when restricted to torsion-free modules,
it becomes a fully faithful functor of additive categories
\begin{align}
\label{E:abelian variety functor}
	\HOM_R(-,E) \colon & \{ \textup{f.p.\ torsion-free left $R$-modules} \}^{\opp} \\
\nonumber	& \rightarrow
	\{\textup{abelian varieties isogenous to a power of $E$}\}.
\end{align}
In the other direction, we have a functor
\begin{align}
\label{E:reverse abelian variety functor}
	\Hom(-,E) \colon & \{\textup{abelian varieties isogenous to a power of $E$}\} \\
\nonumber	& \rightarrow
	\{ \textup{f.p.\ torsion-free left $R$-modules} \}^{\opp} 
\end{align}
that provides the inverse on the essential image 
of~\eqref{E:abelian variety functor}.
These are useful because the modules can be classified for each possible $R$.

 We find necessary and sufficient conditions on $E$ for 
\eqref{E:abelian variety functor} and~\eqref{E:reverse abelian variety functor}
to be equivalences of categories.
For simplicity, in this introduction 
% it would be wrong to capitalize the word ``introduction'' here
we state the answer only for elliptic curves over finite fields.

\begin{theorem}
\label{T:elliptic curve over finite field}
 Let $E$ be an elliptic curve over a finite field $k = \F_q$.
Let $R\colonequals \End E$.
Let $\pi \in R$ be the $q$-power Frobenius endomorphism.
Then~\eqref{E:abelian variety functor} and~\eqref{E:reverse abelian variety functor}
are equivalences of categories
if and only if one of the following holds\textup{:}
\begin{itemize}
\item $E$ is ordinary and $\Z[\pi]=R$\textup{;}
\item $E$ is supersingular, $k=\F_p$, and $\Z[\pi]=R$\textup{;} or
\item $E$ is supersingular, $k=\F_{p^2}$, and $R$ is of rank~$4$ over $\Z$.
\end{itemize}
\end{theorem}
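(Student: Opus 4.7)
The plan is to prove sufficiency and necessity separately, building on the general theory that establishes~\eqref{E:abelian variety functor} as a fully faithful functor of additive categories and characterizes when $\HOM_R(-,E)$ and $\Hom(-,E)$ are mutually inverse equivalences. In each direction, the task reduces to controlling essential surjectivity, i.e., identifying which abelian varieties isogenous to a power of $E$ lie in the image of $\HOM_R(-,E)$.

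\emph{Sufficiency.} In the first two cases ($E$ ordinary, or $E$ supersingular over $\F_p$, each with $\Z[\pi]=R$), $R$ is an order in the imaginary quadratic field $(\End E)\otimes\Q$, and the hypothesis $\Z[\pi]=R$ says that $R$ is the \emph{minimal} order that occurs as the endomorphism ring of an elliptic curve in the isogeny class of $E$, by Waterhouse's classification. Thus every abelian variety isogenous to $E^n$ inherits a faithful $R$-action, and I would invoke a Deligne-type (respectively Centeleghe--Stix-type) equivalence between the isogeny class and a category of projective $R$-modules to match the inverse of that equivalence with $\HOM_R(-,E)$. In the third case ($E$ supersingular over $\F_{p^2}$, $\rk_\Z R=4$), $\pi$ acts as the scalar $\pm p$, all geometric endomorphisms of $E$ are already $\F_{p^2}$-rational, and $R$ is a maximal order in the quaternion algebra $B_{p,\infty}$; essential surjectivity follows from the Deuring--Eichler--Waterhouse parametrization of supersingular abelian varieties over $\F_{p^2}$ by right $R$-ideal classes and direct sums thereof.

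\emph{Necessity.} The guiding principle is that $\HOM_R(M,E)$ always carries a natural right $R$-action (from the left $R$-action on $M$), so its endomorphism ring contains an embedded copy of $R^{\opp}$ of the same $\Z$-rank as $R$. Any abelian variety in the isogeny class whose endomorphism ring cannot accommodate such an action is an obstruction to essential surjectivity. In the ordinary and supersingular-over-$\F_p$ cases with $\Z[\pi]\subsetneq R$, Waterhouse's classification produces an elliptic curve $E'$ isogenous to $E$ with $\End E'=\Z[\pi]\subsetneq R$; such an $E'$ cannot admit a faithful $R$-action and so lies outside the essential image. In the supersingular-over-$\F_{p^2}$ case with $\rk_\Z R<4$, an analogous obstruction is supplied either by a curve in the isogeny class with smaller endomorphism ring (when $\Z[\pi]\subsetneq R$) or by an abelian surface isogenous to $E^2$ whose endomorphism ring contains a rank-$4$ order incompatible with a faithful embedding of the rank-$2$ ring $R$ (when $\Z[\pi]=R$, exploiting the fact that the geometric endomorphism ring jumps upon base change). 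Finally, supersingular $E$ over $\F_{p^n}$ with $n\ge 3$ is excluded by constructions of the same flavor, using the persistent mismatch between $R$ and the geometric endomorphism ring.

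\emph{Main obstacle.} The most delicate step is essential surjectivity in Case 3. The noncommutativity of $R$ forces a careful distinction between left and right modules, and matching $\HOM_R(-,E)$ with the classical Deuring--Waterhouse parametrization requires the Gorenstein property of maximal quaternion orders to ensure that the relevant dualities behave correctly. The necessity direction is conceptually clean but requires constructing, in each excluded case, an explicit isogenous abelian variety with the ``wrong'' endomorphism structure; bookkeeping with Waterhouse's enumeration of endomorphism rings in each isogeny class is where that effort goes.
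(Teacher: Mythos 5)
Your overall architecture is sound --- reduce to essential surjectivity (full faithfulness being general), and argue case by case --- and your guiding principle for necessity in the commutative cases is essentially the paper's argument: anything in the image of $\HOM_R(-,E)$ inherits an $R$-action, so $R \subseteq Z(\End A)$, and Waterhouse gives an $E'$ in the isogeny class with $\End E' = \Z[\pi] \subsetneq R$, which is therefore not in the image (cf.\ Theorems~\ref{T:equivalence of categories for CM curves} and~\ref{T:equivalence of categories for supersingular over F_p}). Your sufficiency steps, routed through Deligne/Centeleghe--Stix in the ordinary and $\F_p$ cases and through a Deuring--Waterhouse ideal-class parametrization in the quaternionic case, are \emph{alternatives} to the paper's method (the paper instead runs everything through the ``kernel subgroup'' framework of Section~\ref{S:kernel subgroups} plus the direct isogeny-minimization argument of Section~\ref{S:new proof}); these routes could be made to work, though they import more.

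The genuine gap is in the necessity direction for the excluded supersingular cases over $\F_{p^2}$ with $\rk R=2$ and over $\F_{p^n}$ with $n \ge 3$. Your stated obstruction --- ``an abelian surface isogenous to $E^2$ whose endomorphism ring contains a rank-$4$ order incompatible with a faithful embedding of the rank-$2$ ring $R$'' --- is not an actual obstruction: a commutative rank-$2$ order embeds in almost any rank-$4$ order, and the presence of a large endomorphism ring on a quotient of $E^2$ does not by itself stop that quotient from arising as $\HOM_R(M,E)$. Moreover, in these cases one may well have $\Z[\pi]=R$, so there need not be any elliptic curve $E'$ in the isogeny class with smaller endomorphism ring, and the ``smaller endomorphism ring'' obstruction simply vanishes. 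The correct argument (Proposition~\ref{P:p-power in supersingular}\eqref{I:alpha_p over F_{p^2}},\eqref{I:alpha_p over F_{p^3}}) is a counting comparison at $\boldalpha_p$: if $\boldalpha_p \subseteq E$ were a kernel subgroup it would equal $E[I]$ for the unique left $R$-ideal $I$ with $\#(R/I) = p^{\rk R/2}$, and then the kernel copies of $\boldalpha_p$ inside $E^2$ are in bijection with $\PP^1(R/I)$, whereas \emph{all} copies of $\boldalpha_p$ in $E^2$ are in bijection with $\PP^1(k)$ because $\End\boldalpha_p \cong k$. Whenever $\#k > \#(R/I)$ --- which happens precisely for $k=\F_{p^2}$ with $\rk R = 2$ and for $k=\F_{p^n}$ with $n \ge 3$ --- there are copies of $\boldalpha_p$ that are not kernel subgroups, so the quotient $E^2/\boldalpha_p$ is outside the essential image and the equivalence fails. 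Without this (or an equivalent) counting argument, your necessity proof does not go through in those cases.
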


\noindent
Theorem~\ref{T:elliptic curve over finite field} is close
to many results in the literature.
Waterhouse in~\cite{Waterhouse1969} proves
many results relating the isogeny class of an elliptic curve $E$ 
to the ideal classes of $\End E$,
and he also considers such issues when $E$ is replaced by an abelian variety.
An analogue of Theorem~\ref{T:elliptic curve over finite field} 
with the functors $\HOM$ and $\Hom(-,E)$
replaced by similarly-defined functors $\tensor$ and $\Hom(E,-)$
is proved in Serre's appendix to~\cite{Lauter2002}
in the case where $\Z[\pi]$ is 
the maximal order in an imaginary quadratic field
(in this case, $R = \Z[\pi]$ necessarily).
Other cases are handled in 
\cite{Shioda-Mitani}, \cite{Lange1975}, \cite{Schoen1992}, 
and especially Kani's work~\cite{Kani2011};
although these works do not define the functor $\HOM$,
they too classify all abelian varieties isogenous to a power of $E$ 
in the case where $E$ is ordinary and $\rk R = 2$ 
(see Theorems 1, 2, and~3 of~\cite{Kani2011}).
In fact, at one point (in the proof of 
our Theorem~\ref{T:maximal order equivalence}\eqref{I:fully faithful}), 
we make use of one of the easier results of~\cite{Kani2011}.

The category of all ordinary abelian varieties over a finite field
is equivalent to the category of \emph{Deligne modules}~\cite{Deligne}, 
which are f.p.\ torsion-free $\Z$-modules provided with an 
endomorphism that corresponds to the Frobenius.
The ordinary case of Theorem~\ref{T:elliptic curve over finite field}
could be deduced from Deligne's equivalence.
For a \emph{prime} ground field $\F_p$,
Centeleghe and Stix~\cite{Centeleghe-Stix2015} 
extended Deligne's equivalence 
to a category including most non-ordinary abelian varieties.
For suitable abelian varieties $B$ over $\F_p$,
this leads to a classification of the quotients of powers of $B$;
in particular, when $B$ is simple, these quotients 
are the abelian varieties isogenous to a power of $B$.
Centeleghe and Stix did not mention the functor $\HOM_R(-,B)$,
but in Section~\ref{S:CS} we prove that 
a functor they used is isomorphic to $\HOM_R(-,B)$.
Combining their work with ours, we can rewrite their classification
in terms of the functor $\HOM_R(-,B)$.
In particular, this yields a second proof of 
Theorem~\ref{T:elliptic curve over finite field}
in the case where the ground field $k$ is $\F_p$.
Our first proof, although only for elliptic curves,
applies also to non-ordinary elliptic curves over $\F_{p^n}$ for $n>1$
and to elliptic curves over infinite fields 
(see Theorems \ref{T:End E = Z} and~\ref{T:ordinary over infinite field},
for example).
It includes the quaternionic
endomorphism case, and also determines exactly when
the functors above give an equivalence.

Let us now outline the rest of the paper.
Section~\ref{S:notation} introduces notation to be used.
If $R$ is the endomorphism ring of an elliptic curve,
then $R$ is $\Z$, an imaginary quadratic order, or a maximal quaternionic order;
Section~\ref{S:classifying torsion-free modules} reviews the classification
of f.p.\ torsion-free left $R$-modules in each case,
and in a little more generality.
Section~\ref{S:categorical constructions}
introduces the two functors above and proves their basic properties;
in particular it is shown that applying $\HOM_R(-,E)$ to torsion-free modules
produces abelian varieties isogenous to a power of $E$.
Moreover, Section~\ref{S:duality} relates duality of modules to duality of abelian varieties.
Section~\ref{S:new proof} proves that 
when $E$ is a supersingular elliptic curve over $\F_{p^2}$
with $\rk \End E = 4$, the functors 
\eqref{E:abelian variety functor} and~\eqref{E:reverse abelian variety functor}
are equivalences of categories,
so that there is a clean classification of abelian varieties isogenous to
a power of $E$.
In preparation for the other cases, Section~\ref{S:kernel subgroups}
defines the notion of a kernel subgroup,
and shows that the functors
\eqref{E:abelian variety functor} and~\eqref{E:reverse abelian variety functor}
are equivalences of categories
if and only if every finite subgroup scheme of every power of $E$
is a kernel subgroup.
All this is combined in Section~\ref{S:abelian varieties},
which gives a complete answer to the question of when
\eqref{E:abelian variety functor} and~\eqref{E:reverse abelian variety functor}
are equivalences of categories.
Section~\ref{S:CS} contains the argument involving
the work of Centeleghe and Stix
for certain abelian varieties of higher dimension over $\F_p$.

\section{Notation}
\label{S:notation}

 Let $R$ be a noetherian integral domain.
Let $K=\Frac R$.
The \emph{torsion submodule} of an $R$-module $M$ is 
\[
	M_{\tors}\colonequals \{m \in M: rm=0 \textup{ for some nonzero $r \in R$}\}.
\]
Call $M$ \emph{torsion-free} if $M_{\tors}=0$.
Call a submodule $N$ of $M$ (or an injection $N \rightarrow M$)
\emph{saturated} if the cokernel of $N \rightarrow M$ is torsion-free.
Given a f.p.\ $R$-module $M$, define its \emph{rank}
as $\rk M \colonequals \dim_K(K \tensor_R M)$.
The notion of rank extends to f.p.\ left modules
over a subring $R$ in a division algebra $K$.

 If $k$ is a field, let $\kbar$ be an algebraic closure of $k$,
let $k_s$ be the separable closure of $k$ in $\kbar$,
and let $\Galois_k\colonequals \Gal(k_s/k)$.
If $G$ is a finite group scheme over a field $k$,
its \emph{order} is $\#G \colonequals \dim_k \Gamma(G,\OO_G)$.
If $A$ is any commutative group scheme over a field $k$,
and $n \in \Z_{>0}$,
then $A[n]$ denotes the group scheme kernel of $A \stackrel{n}\rightarrow A$.
If $\ell$ is a prime not equal to $\Char k$,
then the $\ell$-adic \emph{Tate module} of $A$ is 
\[
	T_\ell A \colonequals \varprojlim_{e} A[\ell^e](k_s).
\]

 If $X$ is a scheme over a field $k$ of characteristic~$p>0$,
and $q$ is a power of $p$,
let $\pi_{X,q} \colon X \rightarrow X^{(q)}$ be the $q$-power Frobenius morphism;
if $k=\F_q$, then let $\pi_X$ be $\pi_{X,q} \colon X \rightarrow X$.
If $E$ is a commutative group scheme over a field $k$,
then $\End E$ denotes its endomorphism ring as a commutative group scheme over $k$,
i.e., the ring of endomorphisms defined over $k$; the same comment applies to $\Hom$.
% The previous sentence is needed since other authors sometimes use $\End E$ 
% to denote the ring of endomorphisms defined over $\kbar$.

 Recall that the \emph{essential image} of a functor $F \colon \calC \rightarrow \calD$
consists of the objects of $\calD$ isomorphic to $FC$ for some $C \in \calC$;
from now on, we call this simply the \emph{image} of $F$.

\section{Classifying torsion-free modules}
\label{S:classifying torsion-free modules}

\subsection{Dedekind domains}
\label{S:Dedekind domains}

Suppose that $R$ is a Dedekind domain.  Finitely presented
(henceforth denoted f.p.) torsion-free $R$-modules can be completely classified,
as is well known~\cite[Theorem~4.13]{Reiner2003}.
To describe the result, we need the notion of determinant of a module.
Given a torsion-free $R$-module $M$ of rank~$r$,
its \emph{determinant} $\det M \colonequals \bigwedge^r M$ 
is a f.p.\ torsion-free $R$-module of rank~$1$;
sometimes we identify $\det M$ with its class in $\Pic R$.
For example, if $M = I_1 \directsum \cdots \directsum I_r$,
where each $I_j$ is a nonzero ideal of $R$,
then $\rk M = r$ and 
\[
	\det M \; 
	\isom \; I_1 \underset{R}\tensor \cdots \underset{R}\tensor I_r \;
	\isom \; I_1 \cdots I_r \quad \textup{(the product ideal in $R$).}
\]

\begin{theorem}
\label{T:modules over Dedekind domain}
\hfill
\begin{enumerate}[\upshape (a)]
   % Please do not remove the \upshape commands, because references will become wrong if you do.
\item\label{I:torsion-free equals projective over Dedekind domain} 
 A f.p.\ $R$-module is torsion-free if and only if it is projective.
\item\label{I:direct sum of ideals over Dedekind domain} 
 Every f.p.\ projective $R$-module is isomorphic to a finite direct sum of invertible ideals.
\item  The isomorphism type of a f.p.\ projective $R$-module 
is determined by its rank and determinant.
\item  Every pair $(r,c) \in \Z_{>0} \times \Pic R$ arises as the rank and determinant 
of a nonzero f.p.\ projective $R$-module $M$\textup{;}
one representative is $M \colonequals R^{r-1} \directsum I$ where $[I]=c$.
\end{enumerate}
\end{theorem}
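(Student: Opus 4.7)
The plan is to follow the standard Steinitz-style classification of finitely presented torsion-free modules over Dedekind domains. Part~(a) is classical: projective modules over any integral domain are torsion-free because they embed into free modules; conversely, for a f.p.\ torsion-free $R$-module $M$, I would verify projectivity locally. At each nonzero prime $\pp$, the localization $R_\pp$ is a DVR, and any f.p.\ torsion-free module over a DVR is free (by the elementary divisor theorem), so $M$ is locally free, and being f.p.\ and locally free, it is projective.

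For part~(b), I would induct on the rank $r$. A f.p.\ torsion-free rank-$1$ $R$-module embeds into $K$ and is thus isomorphic to a nonzero fractional ideal, which is invertible because $R$ is Dedekind. Given a f.p.\ torsion-free $M$ of rank $r \geq 2$, choose a nonzero $K$-linear map $K \tensor_R M \to K$ and restrict it to obtain a surjection $\phi \colon M \surjects I$ onto a nonzero fractional ideal $I \subset K$. Since $I$ is invertible and therefore projective, the surjection splits, yielding $M \isom \ker\phi \directsum I$; the kernel has rank $r-1$, so by the inductive hypothesis $M$ decomposes as a direct sum of $r$ invertible ideals.

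For parts~(c) and~(d), the key tool is the Steinitz identity $I \directsum J \isom R \directsum IJ$ for invertible fractional ideals $I,J$ of $R$. Applied repeatedly to $M \isom I_1 \directsum \cdots \directsum I_r$, it collapses the decomposition to $M \isom R^{r-1} \directsum (I_1 \cdots I_r)$. The determinant computation already recorded just before the theorem gives $\det M \isom I_1 \cdots I_r$, so the isomorphism type of $M$ is recovered from $r$ together with the class $[\det M] \in \Pic R$; this proves (c). Part~(d) is then immediate: $R^{r-1} \directsum I$ with $[I]=c$ manifestly has rank $r$ and determinant class $c$.

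The main obstacle is the Steinitz identity itself, which is more than bookkeeping: one exploits invertibility of $I$ and $J$ to produce elements generating the unit ideal in a suitable way, then writes down an explicit $R$-module isomorphism $I \directsum J \longisomto R \directsum IJ$ given by a $2\times 2$ matrix whose entries lie in the appropriate fractional ideals. All remaining steps are routine inductions and applications of determinant functoriality, so once the Steinitz identity is established the theorem follows mechanically.
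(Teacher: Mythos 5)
The paper does not actually prove Theorem~\ref{T:modules over Dedekind domain}; it treats it as classical and simply cites Reiner's \emph{Maximal Orders}, Theorem~4.13. Your reconstruction is the standard Steinitz-style argument and is correct: (a) via localizing at primes (a f.p.\ module over a Noetherian ring that is free over every $R_\pp$ is projective); (b) by induction on rank, splitting off an invertible ideal as a direct summand via a nonzero linear functional; (c) and (d) by iterating the Steinitz identity $I \directsum J \isom R \directsum IJ$ to collapse to $R^{r-1} \directsum (I_1\cdots I_r)$, so that rank and determinant class in $\Pic R$ are complete invariants. Since there is no proof in the paper to compare against, there is nothing further to reconcile; your argument is exactly what the cited reference provides.
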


\subsection{Quadratic orders}
\label{S:quadratic orders}

 For a general order in a Dedekind domain, 
the structure theory of torsion-free f.p.\ modules is wild.
Fortunately, for \emph{quadratic} orders there is a theory 
that is only slightly more complicated than that for Dedekind domains.
Recall that if $R_{\max}$ is the ring of integers in a quadratic field $K$,
then every order in $K$ is of the form $R_f\colonequals \Z + f R_{\max}$
for a positive integer $f$ called the \emph{conductor}.
The orders containing $R_f$ are the orders $R_g$ for $g|f$.

\begin{theorem}
\label{T:modules over quadratic order}
 Let $R$ be a quadratic order, i.e., an order in a degree~$2$ extension $K$ of $\Q$.
Let $M$ be a f.p.\ torsion-free $R$-module.
\begin{enumerate}[\upshape (i)]
\item \label{I:chain of quadratic orders} 
There exists a unique chain of orders $R_1 \subseteq \cdots \subseteq R_n$ between $R$ and $K$
and invertible ideals $I_1,\ldots,I_n$ of $R_1,\ldots,R_n$, respectively,
such that $M \isom I_1 \directsum \cdots \directsum I_n$ as an $R$-module.
\item \label{I:quadratic Steinitz class}
The $I_i$ are not unique, but their product $I_1 \cdots I_n$
is an invertible $R_n$-ideal whose class $[M] \in \Pic R_n$ depends only on $M$.
\item \label{I:invariants of quadratic module}
The isomorphism type of $M$ 
is uniquely determined by the chain $R_1 \subseteq \cdots \subseteq R_n$
and the class $[M] \in \Pic R_n$.
\end{enumerate}
\end{theorem}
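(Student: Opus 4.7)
The plan is to reduce to Bass's structure theorem for finitely generated torsion-free modules over one-dimensional Noetherian Gorenstein rings, noting that every quadratic order $R$ is a complete intersection (of the form $\Z[x]/(f(x))$), hence Gorenstein, and that its over-orders in $K$ form a finite totally ordered chain $R \subsetneq \cdots \subsetneq R_{\max}$ indexed by divisors of the conductor.  I would carry out the argument by induction on $n \colonequals \rk M$.

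For \textup{(i)}, the base case $n = 1$ proceeds by realizing $M$ as a fractional $R$-ideal in $K$ and taking $R_1 \colonequals \{r \in K : rM \subseteq M\}$, the multiplier ring of $M$, which is automatically an over-order of $R$ in $K$.  Then $M$ is a fractional $R_1$-ideal whose multiplier ring is $R_1$ itself---a \emph{proper} $R_1$-ideal---and the classical theory of quadratic orders shows that proper ideals are exactly the invertible ones.  For $n \geq 2$, the heart of the matter is to split off a rank-one direct summand; this is the content of Bass's theorem for one-dimensional Gorenstein rings and produces $M \isom L \directsum M'$ with $L$ of rank~$1$ and $\rk M' = n-1$, whereupon induction applies.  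Reordering the resulting rank-one summands by their (totally ordered) multiplier rings yields the decomposition of part~\textup{(i)} with $R_1 \subseteq \cdots \subseteq R_n$.

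For \textup{(ii)} and \textup{(iii)}, uniqueness of the chain follows by characterizing it via numerical invariants of $M$ alone.  The smallest element $R_1$ equals the multiplier ring $\{r \in K : rM \subseteq M\}$ (using the action of $K$ on $V \colonequals K \tensor_R M$), since every $R_j$ contains $R_1$ and $M$ contains $I_1$ as a summand.  For the subsequent $R_j$, localize at each maximal ideal $\mathfrak{p}$ of $R$ dividing the conductor: the local ring $R_\mathfrak{p}$ is a one-dimensional local Gorenstein domain, and the local form of Bass's theorem determines the multiset of over-orders appearing in any decomposition of $M_\mathfrak{p}$ (via lengths of torsion quotients $R'_\mathfrak{p} M_\mathfrak{p}/M_\mathfrak{p}$ for over-orders $R'_\mathfrak{p}$); these local data assemble into the global chain because the lattice of over-orders of $R$ is totally ordered.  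Having fixed the chain, the $R_n$-submodule $R_n \cdot M$ of $V$ generated by $M$ is a finitely presented projective $R_n$-module of rank~$n$, and Theorem~\ref{T:modules over Dedekind domain} provides its determinant class in $\Pic R_n$; this equals $[I_1 \cdots I_n]$ and depends only on $M$, establishing \textup{(ii)}.  Part~\textup{(iii)} follows by induction: given the chain and Steinitz class, one determines the $I_1$-summand (whose class is determined by $R_1$ and the contribution to $[M]$) and applies the inductive hypothesis to its complement.

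The main obstacle is the rank-one splitting lemma in the inductive step of \textup{(i)}; it is not formal and genuinely uses the Gorenstein hypothesis on $R$.  A subsidiary technical difficulty is tracking the compatibility of the local Bass invariants at different primes $\mathfrak{p} \mid f$ in the uniqueness argument, which ultimately relies on the fact that over-orders of a quadratic order are totally ordered---a feature not shared by orders in number fields of higher degree, where the analogous theorem can fail.
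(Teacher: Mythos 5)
The paper does not actually prove Theorem~\ref{T:modules over quadratic order}; it simply cites Borevi\v{c}--Faddeev 1960 (with pointers to Bass 1963 and others for generalizations). Your sketch attempts to reconstruct the argument from that literature, and the overall scaffolding — Gorenstein/Bass ring property of quadratic orders, fractional-ideal embedding and multiplier rings in rank one, a rank-one splitting lemma for the inductive step — is the right skeleton.

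However, there is a concrete false premise that your sketch leans on at three separate points: the over-orders of a quadratic order are \emph{not} totally ordered in general. The over-orders of $R = R_f$ are the $R_g = \Z + g R_{\max}$ for $g \mid f$, with $R_g \subseteq R_{g'}$ iff $g' \mid g$; so the poset of over-orders is the opposite of the divisor lattice of $f$, and as soon as $f$ has two distinct prime factors $p$ and $q$, the orders $R_p$ and $R_q$ are incomparable. You use the supposed total order (a) to ``reorder the resulting rank-one summands by their multiplier rings'' into a chain, (b) to glue local Bass invariants into a global chain in the uniqueness argument, and (c) in your closing remark that total order of over-orders is the feature distinguishing quadratic orders from higher-degree orders. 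All three steps fail as stated; none of the three are repaired by the (true) fact that over-orders are totally ordered \emph{locally} at each maximal ideal, since the issue is assembling the local chains into a single global one. The correct phenomenon is stronger and genuinely requires proof: Bass's exchange lemma shows that for incomparable multiplier rings $O_I$, $O_J$ of rank-one summands $I$, $J$, one has $I \oplus J \cong I' \oplus J'$ with multiplier rings $O_I \cap O_J$ and $O_I O_J$, and iterating this produces a chain among the multiplier rings of the summands of a \emph{fixed} module $M$. That exchange lemma is where the Gorenstein hypothesis (more precisely, that every over-order is Gorenstein, i.e., $R$ is a Bass ring) actually enters; ``over-orders form a chain'' is not a hypothesis one gets for free. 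Also, in part~(ii), applying Theorem~\ref{T:modules over Dedekind domain} to $R_n \cdot M$ is only legitimate when $R_n$ is the maximal order; for general $R_n$ you need the Steinitz isomorphism $I \oplus J \cong R_n \oplus IJ$ for invertible $R_n$-ideals, which again rests on the Gorenstein structure rather than on the Dedekind theory.
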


\begin{proof}
 See~\cite{Borevich-Faddeev1960}.
For generalizations to other integral domains, 
see \cite[Section~7]{Bass1963}, \cite{Borevich-Faddeev1965}, \cite{Levy1985}, 
and the survey article \cite{Salce2002}.
\end{proof}

\subsection{Maximal orders in quaternion algebras}
\label{S:maximal orders}

 Let $B$ be a quaternion division algebra over $\Q$.
Let $\calO$ be a maximal order in $B$.
Given a f.p.\ left $\calO$-module $M$,
the nonnegative integer $\rk M$ 
is the dimension of the left $B$-vector space $B \tensor_\calO M$,
which is also $\frac14 \rk_\Z M$.
Call $M$ \emph{torsion-free} if the natural map $M \rightarrow B \tensor_\calO M$
is an injection, or equivalently if $M$ is torsion-free as a $\Z$-module.

 The classification of f.p.\ torsion-free left $\calO$-modules
is similar to the classification over a Dedekind domain,
and even simpler in ranks at least $2$.

\begin{theorem}
\label{T:modules over quaternion order}
\hfill
\begin{enumerate}[\upshape (a)]
\item\label{I:torsion-free equals projective over O} 
 A f.p.\ left $\calO$-module is torsion-free if and only if it is projective.
\item\label{I:direct sum of ideals over O}
 Every f.p.\ projective left $\calO$-module is isomorphic to a finite direct sum 
of ideals.
\item \label{I:free over quaternion order}
 A f.p.\ projective left $\calO$-module of rank at least $2$ is free.
\end{enumerate}
\end{theorem}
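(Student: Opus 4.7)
The plan rests on the classical fact that $\calO$, being a maximal $\Z$-order in the semisimple $\Q$-algebra $B$, is hereditary: every left $\calO$-submodule of a projective left $\calO$-module is projective. This reduces parts~\eqref{I:torsion-free equals projective over O} and~\eqref{I:direct sum of ideals over O} to routine arguments. For~\eqref{I:torsion-free equals projective over O}, projective trivially implies torsion-free; conversely, a f.p.\ torsion-free $M$ of rank $r$ embeds in $M \tensor_\calO B \isom B^r$, so after clearing denominators it becomes a left $\calO$-submodule of $\calO^r$ and hence is projective by hereditarity. For~\eqref{I:direct sum of ideals over O}, I would induct on rank: projecting a nonzero f.p.\ projective $P \subseteq B^n$ onto a coordinate produces, after rescaling, a nonzero surjection $P \twoheadrightarrow I$ onto a left $\calO$-ideal $I$; this splits because $I$ is projective, and induction finishes the decomposition.

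For part~\eqref{I:free over quaternion order}, I would first use~\eqref{I:direct sum of ideals over O} to write $P \isom I_1 \directsum \cdots \directsum I_n$ and then show that $P$ is locally isomorphic to $\calO^n$ at every prime: the completion $\calO_p$ is a maximal $\Z_p$-order, namely a matrix ring $\mathrm{M}_2(\Z_p)$ (when $B_p$ is split) or a maximal order in a quaternion division algebra over $\Q_p$ (when $B_p$ is ramified), and in either case it is a principal ideal ring, so every $I_{j,p}$ is free of rank one. Hence $P$ lies in the genus of $\calO^n$. Finally I would invoke strong approximation for the simply connected semisimple algebraic group $\mathrm{SL}_n(B)$ over $\Q$, which (because $n \geq 2$) holds even when $B$ is definite: the point is that $\mathrm{SL}_n(B \tensor_\Q \mathbb{R})$ is non-compact as soon as $n \geq 2$, whereas $\mathrm{SL}_1(\mathbb{H}) \isom S^3$ is compact. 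Combined with the triviality of the class group of $\Z$, strong approximation forces the genus of $\calO^n$ to contain a single isomorphism class, so $P \isom \calO^n$.

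The main obstacle is part~\eqref{I:free over quaternion order}: the first two parts are formal consequences of hereditarity, but the freeness statement is a genuinely arithmetic cancellation theorem whose proof requires strong approximation (equivalently, Eichler--Jacobinski cancellation in rank $\geq 2$). The rank threshold is sharp, since in rank one the class number of $\calO$ may exceed one when $B$ is definite, and then non-principal left ideals produce non-free rank-one projective modules.
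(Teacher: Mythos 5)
Your outline is sound, but be aware that the paper itself gives no argument here: it settles (a) and~(b) by citing Corollary~21.5 of Reiner's \emph{Maximal Orders}, and (c) by citing Eichler's 1938 paper (with a pointer to Shioda). What you have done is unpack those citations. Your proofs of (a) and~(b) are exactly the hereditarity argument underlying Reiner's corollary (maximal orders over Dedekind domains are hereditary, submodules of projectives are projective, and a rank-reduction by projecting onto a coordinate gives the decomposition into ideals). Your proof of (c) via strong approximation for $\mathrm{SL}_n(B)$ is the standard modern packaging of Eichler's cancellation theorem, and the observation that $\mathrm{SL}_n(B \otimes_\Q \mathbb{R})$ becomes noncompact once $n \geq 2$, even when $B$ is definite (which is the case relevant to supersingular elliptic curves), is precisely why the theorem applies there while failing in rank one. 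The one step you compress is deducing a singleton class set: strong approximation for $\mathrm{SL}_n$ shows that the reduced-norm (Steinitz-class) map on the genus of $\calO^n$ has singleton fibers, and one must separately note that the target class group over $\Z$ is trivial, which you do invoke; a fully rigorous write-up would spell out this reduced-norm bookkeeping. In short, your route trades the paper's bare citations for an actual self-contained argument, at the cost of invoking strong approximation (or equivalently Eichler--Jacobinski cancellation) as a black box.
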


\begin{proof}
\hfill
\begin{enumerate}[\upshape (a)]
\item  See~\cite[Corollary~21.5]{Reiner2003}.
\item  This follows from the final statement of~\cite[Corollary~21.5]{Reiner2003}.
\item  This is a classical result due to Eichler~\cite{Eichler1938}; 
see also~\cite[Theorem~3.5]{Shioda1979}.
\qedhere
\end{enumerate}
\end{proof}

\section{Categorical constructions}
\label{S:categorical constructions}

\subsection{A functor to an abelian category}
\label{S:functor to an abelian category}

We recall the following general construction 
(cf.\ \cite[\S1]{Giraud1968}, \cite[Appendix]{Waterhouse1969}, 
or~\cite[pp.~50--51]{Serre1985}).
Fix an abelian category $\calC$, an object $E \in \calC$,
a ring $R$,
and a ring homomorphism $R \rightarrow \End E$.
For each f.p.\ left $R$-module $M$,
choose a presentation 
\begin{equation}
\label{E:presentation of M}
	R^m \to R^n \to M \to 0.
\end{equation}
If we view $R^m$ and $R^n$ as spaces of row vectors,
then the $R$-module homomorphism $R^m \rightarrow R^n$ 
is represented by right-multiplication by some matrix $X \in \M_{m,n}(R)$.
Since $R$ acts on $E$, left-multiplication by $X$ defines a morphism
$E^n \rightarrow E^m$, whose kernel we call $A$:
\begin{equation}
\label{E:definition of A}
	0 \to A \to E^n \to E^m.
\end{equation}
For any $C \in \calC$, applying $\Hom(C,-)$ yields an exact sequence
\[
	0 \to \Hom(C,A) \to \Hom(C,E)^n \to \Hom(C,E)^m.
\]
On the other hand, applying $\Hom_R(-,\Hom(C,E))$ 
to~\eqref{E:presentation of M} yields an exact sequence
\[
	0 \to \Hom_R(M,\Hom(C,E)) \to \Hom(C,E)^n \to \Hom(C,E)^m.
\]
Comparing the previous two sequences yields an isomorphism
\[
	\Hom(C,A) \isom \Hom_R(M,\Hom(C,E)),
\]
and it is functorial in $C$.
This gives a presentation-independent description of $A$ 
up to isomorphism as an object of $\calC$ representing 
the functor $\Hom_R(M,\Hom(-,E)) \colon \calC \rightarrow \Sets$.
Define $\HOM_R(M,E) \colonequals A$.

An $R$-module homomorphism $M \rightarrow M'$ induces a homomorphism
\[
	\Hom_R(M',\Hom(C,E)) \to \Hom_R(M,\Hom(C,E))
\]
for each $C \in \calC$, functorially in $C$, so by Yoneda's lemma
it induces also a morphism between the representing objects 
$\HOM_R(M',E) \rightarrow \HOM_R(M,E)$.
Thus we obtain a functor
\begin{equation}
\label{E:functor}
	\HOM_R(-,E) \colon \{ \textup{f.p.\ left $R$-modules} \}^{\opp}
	\rightarrow
	\calC.
\end{equation}

If $0 \rightarrow M_1 \rightarrow M_2 \rightarrow M_3$ 
is an exact sequence of f.p.\ left $R$-modules,
then for each $C \in \calC$,
\[
	0 \to \Hom_R(M_1,\Hom(C,E)) 
	\to \Hom_R(M_2,\Hom(C,E)) 
	\to \Hom_R(M_3,\Hom(C,E))
\]
is exact.
This implies that the sequence of representing objects
\[
	0 \to \HOM_R(M_1,E) \to \HOM_R(M_2,E) \to \HOM_R(M_3,E)
\]
is exact.
That is, the functor $\HOM_R(-,E)$ is left exact.

\begin{remark}
\label{R:tensor functor}
Following Serre's appendix to~\cite{Lauter2002},
one can also define a functor 
\[
	- {\tensor_R} E \colon \{ \textup{f.p.\ right $R$-modules} \}
	\rightarrow
	\calC.
\]
Namely, given a f.p.\ right $R$-module $M$, choose a presentation
\[
	R^m \to R^n \to M \to 0,
\]
and define $M \tensor_R E$ as the cokernel of $E^m \to E^n$.
\end{remark}

\subsection{The functor for an elliptic curve produces abelian varieties}

The category of commutative proper group schemes over a field $k$
is an abelian category 
(the hardest part of this statement is the existence of cokernels, 
which is~\cite[Corollaire~7.4]{Grothendieck1961}).
{}From now on, we assume that $\calC$ is this category.

\begin{proposition}
\label{P:functor of points}
Let $M$ be an $R$-module.
Let $A \colonequals \HOM_R(M,E)$.
For every $k$-algebra $L$, we have $A(L) \isom \Hom_R(M,E(L))$.
\end{proposition}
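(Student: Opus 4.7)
The plan is to redo, for the $L$-points functor, the representability calculation that was carried out in the general abelian category in Section~\ref{S:functor to an abelian category}. Fix a presentation $R^m \to R^n \to M \to 0$ as in~\eqref{E:presentation of M}, with the first arrow given by right multiplication by a matrix $X \in \M_{m,n}(R)$. By construction, $A$ fits in the exact sequence $0 \to A \to E^n \to E^m$ of~\eqref{E:definition of A}, where the second arrow is left multiplication by $X$ via the action $R \to \End E$.

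First I observe that the sequence $0 \to A \to E^n \to E^m$ stays left exact after evaluating on $L$-points: since $A$ is defined as a kernel in the category of commutative group schemes, it represents, as a scheme, the functor sending a $k$-scheme $S$ to $\ker\bigl(E^n(S) \to E^m(S)\bigr)$. Taking $S = \Spec L$ shows that $A(L)$ is the kernel of the map $E(L)^n \to E(L)^m$ induced by left multiplication by $X$.

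Next I compute the other side. The group $E(L)$ is an $R$-module via $R \to \End E \to \End(E(L))$, so applying $\Hom_R(-,E(L))$ to~\eqref{E:presentation of M} yields an exact sequence
\[
0 \to \Hom_R(M, E(L)) \to \Hom_R(R^n, E(L)) \to \Hom_R(R^m, E(L)).
\]
The standard evaluation-on-basis isomorphism $\Hom_R(R^n, E(L)) \isom E(L)^n$ turns the last arrow into left multiplication by $X$ on $E(L)^n$. Hence both $A(L)$ and $\Hom_R(M, E(L))$ are identified with the same kernel inside $E(L)^n$, yielding the desired isomorphism, functorially in $L$.

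The main thing to check is the compatibility between the two occurrences of ``multiplication by $X$''---once as a morphism $E^n \to E^m$ of group schemes built from $R \to \End E$, and once as an $R$-linear map $E(L)^n \to E(L)^m$ obtained by dualizing $R^m \to R^n$. This is a routine unwinding of conventions (rows versus columns) and poses no real obstacle. Independence from the chosen presentation is automatic, since $A$ itself is already defined up to canonical isomorphism independently of the presentation.
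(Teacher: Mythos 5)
Your proof is correct and follows essentially the same route as the paper: take $L$-points of the defining sequence $0 \to A \to E^n \to E^m$, apply $\Hom_R(-,E(L))$ to the presentation $R^m \to R^n \to M \to 0$, and observe that the two resulting maps $E(L)^n \to E(L)^m$ coincide. The extra discussion about why $L$-points preserve the kernel (scheme-theoretic fiber product) and about row/column conventions is fine but not materially different from what the paper leaves implicit.
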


\begin{proof}
Taking $L$-points of \eqref{E:definition of A} yields an exact sequence
\[
	0 \to A(L) \to E(L)^n \to E(L)^m.
\]
On the other hand, applying $\Hom_R(-,E(L))$ to \eqref{E:presentation of M}
yields an exact sequence
\[
	0 \to \Hom_R(M,E(L)) \to E(L)^n \to E(L)^m.
\]
The maps $E(L)^n \to E(L)^m$ in both sequences are the same, 
so the result follows.
\end{proof}

\begin{proposition}
\label{P:dimension of A}
 Let $E$ be an abelian variety over a field $k$.
Let $R$ be a domain that is f.p.\ as a $\Z$-module.
Let $R \rightarrow \End E$ be a ring homomorphism.
Let $M$ be a f.p.\ left $R$-module.
Let $A \colonequals \HOM_R(M,E)$.
Then $\dim A = (\rk M)(\dim E)$.
\end{proposition}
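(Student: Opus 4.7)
The plan is to unwind the definition of $A = \HOM_R(M,E)$ as a kernel and then compute its dimension after passing to the isogeny category.

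Choose a presentation $R^m \to R^n \to M \to 0$ represented by a matrix $X \in \M_{m,n}(R)$ as in Section~\ref{S:functor to an abelian category}; by construction $A$ is the kernel of the induced morphism $\phi_X \colon E^n \to E^m$. Since $R$ is a domain that is f.p.\ as a $\Z$-module, the $\Q$-algebra $K \colonequals R \otimes_\Z \Q$ is finite-dimensional and has no zero divisors, hence is a division algebra. Tensoring the presentation with $K$ yields $K^m \to K^n \to K \otimes_R M \to 0$, in which the first map is still given by $X$, so
\[
	\rk M = \dim_K(K \otimes_R M) = n - s,
\]
where $s$ denotes the rank of $X$ viewed as a matrix over the division ring $K$. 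The proposition therefore reduces to the claim that $\dim \im \phi_X = s \dim E$.

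For this I would pass to the isogeny category. Over the division ring $K$, Gaussian elimination produces $P \in \GL_n(K)$ and $Q \in \GL_m(K)$ with $QXP = \left(\begin{smallmatrix} I_s & 0 \\ 0 & 0 \end{smallmatrix}\right)$. Clear denominators by choosing a positive integer $N$ with $NP \in \M_n(R)$ and $NQ \in \M_m(R)$; these matrices define endomorphisms $\alpha$ of $E^n$ and $\beta$ of $E^m$ that are isogenies, since they remain invertible in $\M_n(K)$ and $\M_m(K)$. A direct computation shows that $\beta \circ \phi_X \circ \alpha$ is the morphism $(u_1,\ldots,u_n) \mapsto (N^2 u_1, \ldots, N^2 u_s, 0, \ldots, 0)$, whose image is the abelian subvariety $E^s \times \{0\}^{m-s}$ of $E^m$, of dimension $s \dim E$. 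Because pre- and post-composing by isogenies does not change the dimension of the image, $\dim \im \phi_X = s \dim E$ as well, and the exact sequence $0 \to A \to E^n \to \im \phi_X \to 0$ gives $\dim A = (n-s) \dim E = (\rk M)(\dim E)$.

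The only step requiring any care is the reduction of $X$ to normal form, which must be carried out in a possibly noncommutative setting; this is where Goldie's theorem (producing the division algebra $K$) and Gaussian elimination over a skew field come in. Both are standard, so I do not expect a real obstacle — the argument is essentially the usual ``pass to the generic fiber and invoke linear algebra'' move, with the minor wrinkle that the generic fiber is a division algebra rather than a field.
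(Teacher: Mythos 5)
Your argument is correct, but it is a genuinely different route from the paper's. The paper proves $\dim A = (\rk M)(\dim E)$ by choosing an embedding $R^r \injects M$ with torsion cokernel $T$, applying the functor to get $0 \to \HOM_R(T,E) \to A \to E^r$, observing that $\HOM_R(T,E)$ is finite (so $\dim A \le r\dim E$), and then using the hypothesis that $R$ is f.p.\ over $\Z$ to produce a positive integer $n$ with $nT=0$, which forces $A \to E^r$ to be surjective (so $\dim A \ge r\dim E$). You instead work directly with the presentation matrix $X$, diagonalize it over the division algebra $K = R\otimes\Q$ via Gaussian elimination, clear denominators to obtain integral matrices inducing isogenies, and compute $\dim\im\phi_X$ from the normal form. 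Both are sound: the paper's argument is shorter and stays closer to the universal property (no normal form, no isogenies), while yours is the more concrete ``linear algebra over the generic fiber'' computation. One small point that deserves a sentence in your write-up: you should note that $R$ must be $\Z$-torsion-free (as it is in all applications) for $R \otimes_\Z \Q$ to be a division algebra and for the matrices $NP, NQ$ to induce isogenies---the fact that an integral matrix invertible over $K$ induces an isogeny requires observing that its inverse, after clearing denominators, is again integral, so the composite is multiplication by an integer.
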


\begin{proof}
For any $n \ge 1$, 
the presentation $R \stackrel{n}\rightarrow R \rightarrow R/nR \rightarrow 0$
shows that $\HOM_R(R/nR,E) \isom E[n]$.
If $M$ is torsion, then it is a quotient of $(R/nR)^m$ for some $m,n \ge 1$;
then $A \subseteq E[n]^m$, so $A$ is finite.

In general, let $r=\rk M$.
There is an exact sequence 
\[
	0 \rightarrow R^r \rightarrow M \rightarrow T \rightarrow 0
\]
for some torsion module $T$;
this yields 
\begin{equation}
\label{E:A to E^r}
	0 \rightarrow \HOM_R(T,E) \rightarrow A \rightarrow E^r.
\end{equation}
By the previous paragraph, $\HOM_R(T,E)$ is finite, so $\dim A \le r \dim E$.
There exists a nonzero $\rho \in R$ such that $\rho T = 0$.
Since $R$ is f.p.\ as a $\Z$-module, 
it follows that there exists a positive integer $n$ such that $nT=0$.
Then $R^r \stackrel{n}\rightarrow R^r$ factors as $R^r \injects M \rightarrow R^r$,
which induces $E^r \rightarrow A \rightarrow E^r$ whose composition is multiplication by $n$,
which is surjective.
Thus $A \rightarrow E^r$ is surjective, so $\dim A \ge r \dim E$.
Hence $\dim A = r \dim E$.
\end{proof}

 If $E$ is an elliptic curve, and $I$ is a subset of $\End E$,
let $E[I] \colonequals \Intersection_{\alpha \in I} \ker \alpha$.

\begin{theorem}
\label{T:functor for E}
 Let $E$ be an elliptic curve over a field $k$.
Let $R$ be a saturated subring of $\End E$ \textup{(}saturated as a $\Z$-module\textup{)}.
Let $M$ be a torsion-free f.p.\ left $R$-module.
Let $A \colonequals \HOM_R(M,E)$.
Then 
\begin{enumerate}[\upshape (a)]
\item\label{I:A is an abelian variety}
The group scheme $A$ is an abelian variety isogenous to a power of~$E$.
\item\label{I:Hom is exact}
The functor $\HOM_R(-,E)$ is exact.
\item\label{I:image of E^r --> E^s}
If $f \colon E^r \rightarrow E^s$ is a homomorphism arising from 
applying $\HOM_R(-,E)$ to an $R$-homomorphism $g \colon R^s \rightarrow R^r$,
then the image of $f$ is 
isomorphic to $\HOM_R(N,E)$ for some f.p.\ torsion-free $R$-module $N
\subseteq R^r$. 
(Moreover, if $R = \End E$,
then every homomorphism $f \colon E^r \rightarrow E^s$ 
arises from some $g$.)
\item \label{I:Hom for ideal}
If $I$ is a nonzero left $R$-ideal,
then $\HOM_R(R/I,E) \isom E[I]$ and $\HOM_R(I,E) \isom E/E[I]$.
\item \label{I:order of Hom(N,E)}
If $T$ is an $R$-module that is finite as a set, 
then $\HOM_R(T,E)$ is a finite group scheme of order $(\#T)^{2/\rk R}$.
\item \label{I:n-torsion of HOM}
If $n \in \Z_{>0}$, then $A[n] \isom \HOM_R(M,E[n])$,
where the latter is defined by using the induced ring homomorphism $R \rightarrow \End E[n]$.
\item \label{I:Tate module of HOM}
If $\ell$ is a prime not equal to $\Char k$,
then $T_\ell A \isom \Hom_R(M,T_\ell E)$.
\end{enumerate}
\end{theorem}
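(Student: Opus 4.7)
The strategy is to prove the seven parts in a dependency order, using the left-exactness of $\HOM_R(-,E)$ from Section~\ref{S:functor to an abelian category} and Proposition~\ref{P:functor of points} throughout. The first isomorphism in (d) is immediate from construction: if $I=(\alpha_1,\ldots,\alpha_m)$, the presentation $R^m\xrightarrow{(\alpha_i)}R\to R/I\to 0$ realizes $\HOM_R(R/I,E)=\bigcap_i\ker\alpha_i=E[I]\subseteq E$. Parts (f) and (g) then follow by Yoneda from Proposition~\ref{P:functor of points}: on any $k$-algebra $L$, $A[n](L)=A(L)[n]=\Hom_R(M,E(L))[n]=\Hom_R(M,E[n](L))=\HOM_R(M,E[n])(L)$ functorially in $L$, which gives (f); taking $\varprojlim_e$ on $k_s$-points for $n=\ell^e$ then gives (g), since $\Hom_R(M,-)$ commutes with inverse limits.

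The key technical input is the second half of (d): the natural map $E\to\HOM_R(I,E)$ induced by $I\injects R$ has kernel $E[I]$ by left-exactness, so the task is to prove it surjective. The clean case is when $I$ is projective over $R$: then $I$ is a direct summand of some $R^n$, and since $\HOM_R(-,E)$ converts direct sums to products, $\HOM_R(I,E)$ is a direct factor of $E^n$, hence an abelian variety of dimension one (by Proposition~\ref{P:dimension of A}), i.e.\ an elliptic curve; the map $E\to\HOM_R(I,E)$ is then a nonzero morphism of elliptic curves, hence an isogeny, and the kernel computation gives $\HOM_R(I,E)\isom E/E[I]$. This handles $R=\Z$, maximal quadratic orders, and maximal quaternion orders (Theorems~\ref{T:modules over Dedekind domain} and~\ref{T:modules over quaternion order}). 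For a non-maximal quadratic order (Theorem~\ref{T:modules over quadratic order}) an $R$-ideal $I$ need not be projective over $R$, and I would argue instead that $I$ is invertible over a uniquely determined larger order $R'\supseteq R$ in $K$, reducing to the projective situation; some care is required because $R'$ need not lie in $\End E$, so one must pass between $R$- and $R'$-module structures using that every element of $I$ itself lies in $R\subseteq\End E$.

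With (d) established, the rest falls into place. For (b), any short exact sequence of f.p.\ $R$-modules decomposes (via Section~\ref{S:classifying torsion-free modules}) into summands that are ideals plus a torsion piece; since $\HOM_R(-,E)$ converts direct sums to products and the ideal case is (d), the required surjectivity $\HOM_R(M_2,E)\surjects\HOM_R(M_1,E)$ follows. For (c), factor $g\colon R^s\to R^r$ as $R^s\surjects N\injects R^r$ with $N=\im g$ (f.p.\ and torsion-free since $R$ is noetherian and $R^r$ is torsion-free); exactness (b) turns these into $\HOM_R(N,E)\injects E^s$ and $E^r\surjects\HOM_R(N,E)$, so $\im f=\HOM_R(N,E)$, and the ``moreover'' clause uses the standard identification $\Hom(E^r,E^s)=\M_{s\times r}(\End E)=\Hom_R(R^s,R^r)$. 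For (a), embed $M\injects R^r$ with torsion cokernel $T$ (by clearing denominators in $K\otimes_R M\isom K^r$); (b) turns this into $0\to\HOM_R(T,E)\to E^r\to A\to 0$ with $\HOM_R(T,E)$ finite (Proposition~\ref{P:dimension of A}), exhibiting $A$ as the quotient of $E^r$ by a finite flat subgroup scheme, which is an abelian variety isogenous to $E^r$. For (e), a composition series $0=T_0\subset\cdots\subset T_\ell=T$ with simple quotients $R/\mathfrak{m}_i$ combined with exactness and (d) yields $\#\HOM_R(T,E)=\prod_i\#E[\mathfrak{m}_i]$, reducing the formula to the verification that $\#E[\mathfrak{m}]=(\#R/\mathfrak{m})^{2/\rk R}$ for each maximal ideal $\mathfrak{m}$, which I would check by a direct case analysis on the three possibilities for $\End E$.

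The main obstacle is the non-projective case of (d)'s second half: for a non-maximal quadratic order $R=\End E$, an ideal $I$ may fail to be projective over $R$, and the reduction to the maximal-order case must be handled so that the elements of $I$ remain genuine endomorphisms of $E$ even though the enlarged order governing $I$ need not act on $E$.
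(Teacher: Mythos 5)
Your plan reorders the proof so that the surjectivity $E\surjects\HOM_R(I,E)$ in part~(\ref{I:Hom for ideal}) is established first and then drives parts (\ref{I:Hom is exact}) and~(\ref{I:A is an abelian variety}). That works cleanly when every f.p.\ torsion-free module is projective (so for $R=\Z$ and for a maximal quaternion order), but it leaves the non-maximal quadratic order case unproved, and that case is exactly where the real content lies. You correctly flag this as ``the main obstacle,'' but the plan you sketch --- viewing $I$ as invertible over a larger order $R'$ --- is not carried out, and it is not clear it can be: $R'$ need not embed in $\End E$, so there is no natural functor $\HOM_{R'}(-,E)$ to reduce to, and the ideal $I$ being projective over $R'$ does not make $\HOM_R(I,E)$ (the functor over $R$!) a direct factor of any $E^n$. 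Without (\ref{I:A is an abelian variety}) already in hand you cannot even assert that $\HOM_R(I,E)$ is connected or reduced; you only know it is a $1$-dimensional commutative proper group scheme whose identity component contains the elliptic curve $E/E[I]$, and ruling out a nilpotent thickening or a finite \'etale piece is precisely the hard part. The paper avoids this entirely by proving (\ref{I:A is an abelian variety}) \emph{first} for a quadratic order, via a Lie-algebra computation (using that the conductor is prime to~$p$, from Waterhouse's Theorem~4.2) to get smoothness, and an $\ell$-adic point count to kill the \'etale part; part~(\ref{I:Hom for ideal}) is then a one-line consequence.

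There is a second gap in your proof of (\ref{I:Hom is exact}). You say a short exact sequence of f.p.\ $R$-modules ``decomposes into summands that are ideals plus a torsion piece'' and that surjectivity then follows from the ideal case (\ref{I:Hom for ideal}). But short exact sequences do not split just because the terms do: if $M_1\isom\bigoplus_j I_j\injects M_2$, knowing each composite $\HOM_R(M_2,E)\to\HOM_R(I_j,E)$ is surjective does not make $\HOM_R(M_2,E)\to\prod_j\HOM_R(I_j,E)$ surjective (compare $\Z/4\surjects\Z/2$ in each coordinate versus $\Z/4\to\Z/2\times\Z/2$). The paper's route --- Lemma~\ref{L:projective} reduces exactness to showing $\HOM_R(P,E)\surjects\HOM_R(M,E)$ for $P$ projective, and then uses (\ref{I:A is an abelian variety}) to see both sides are abelian varieties and Proposition~\ref{P:dimension of A} to force equality of dimensions --- gets the surjectivity in one genuinely global step. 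Your treatments of (\ref{I:n-torsion of HOM}), (\ref{I:Tate module of HOM}), (\ref{I:image of E^r --> E^s}), and~(\ref{I:order of Hom(N,E)}) are fine and essentially match the paper (the functor-of-points argument for~(\ref{I:n-torsion of HOM}) is a legitimate variant of the Yoneda argument there), but they all depend on (\ref{I:Hom is exact}), so the two gaps above are what need to be closed, and closing them in your chosen order appears to reproduce the work of the paper's proof of~(\ref{I:A is an abelian variety}).
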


\begin{proof}
\hfill
\begin{enumerate}[\upshape (a)]
\item 
 Let $r=\rk M = \dim A$.
The proof of Proposition~\ref{P:dimension of A} shows that $A$ admits a surjection to $E^r$
with finite kernel, so if $A$ is an abelian variety, it is isogenous to $E^r$.

 The ring $R$ is either $\Z$, a quadratic order, or a maximal quaternionic order.
In the first and third cases, $M$ is projective of rank $r$ over $R$ 
(the quaternionic case is 
Theorem~\ref{T:modules over quaternion order}\eqref{I:torsion-free equals projective over O});
in other words, $M$ is a direct summand of $R^n$ for some $n$;
thus $A$ is a direct factor of $E^n$, so $A$ is an abelian variety.

 So suppose that $R$ is a quadratic order.
Let $c$ be the conductor, i.e., the index of $R$ in its integral closure.
Let $\ell$ denote a prime.
If $\ell \nmid c$, 
then the semi-local ring $R \tensor \Z_{(\ell)}$ is a Dedekind domain,
but a semi-local Dedekind domain is a principal ideal domain,
so $M \tensor \Z_{(\ell)}$ is free of rank~$r$ over $R \tensor \Z_{(\ell)}$,
and $M/\ell M$ is free of rank $r$ over $R/\ell R$.

 We claim that $A$ is smooth.
This is automatic if $\Char k=0$.
So suppose that $\Char k=p>0$.
By \cite[Theorem~4.2]{Waterhouse1969}, 
we have $p \nmid c$,
so by the above, $M/pM$ is free of rank $r$ over $R/pR$.
By Proposition~\ref{P:functor of points}, applying $\Hom_R(M,-)$ to 
\[
	0 \rightarrow \Lie E \rightarrow E(k[\epsilon]/(\epsilon^2)) \rightarrow E(k) \rightarrow 0
\]
yields
\[
	0 \rightarrow \Hom_R(M,\Lie E) \rightarrow A(k[\epsilon]/(\epsilon^2)) \rightarrow A(k) \rightarrow 0.
\]
Thus
\[
	\Lie A \isom \Hom_R(M,\Lie E) \isom \Hom_{R/pR}(M/pM,\Lie E) \isom (\Lie E)^r.
\]
In particular, $\dim \Lie A = r$, so $A$ is smooth.

Since $A$ is also proper,
it is an extension of a finite \'etale commutative group scheme
$\Phi$ by an abelian variety $B$.
The constructed surjection $A \rightarrow E^r$ with finite kernel
restricts to a homomorphism $B \rightarrow E^r$ with finite kernel,
and it must still be surjective since $E^r$ does not have algebraic subgroups
of finite index;
thus $B$ is isogenous to $E^r$.
Since $B(\kbar)$ is divisible, the extension splits over $\kbar$.
In particular, for each prime $\ell$, 
\begin{equation}
\label{E:l-torsion}
	\#A(\kbar)[\ell] = \#E(\kbar)[\ell]^r \#\Phi[\ell].
\end{equation}
On the other hand, Proposition~\ref{P:functor of points} implies
\begin{equation}
\label{E:l-torsion group}
	A(\kbar)[\ell] = \Hom_R(M,E(\kbar)[\ell]) 
	= \Hom_{R/\ell R}(M/\ell M,E(\kbar)[\ell]).
\end{equation}
We claim that
\begin{equation}
\label{E:l-torsion again}
	\#A(\kbar)[\ell] = \#E(\kbar)[\ell]^r.
\end{equation}
If $\ell \nmid c$, then $M/\ell M$ is free of rank~$r$ over $R/\ell R$,
so \eqref{E:l-torsion again} holds;
in particular, this holds if $\ell=\Char k$.
Now suppose that $\ell|c$.
Then $R/\ell R \isom \F_\ell[e]/(e^2)$.
Every module over $R/\ell R$ is a direct sum of copies of $\F_\ell$ and $\F_\ell[e]/(e^2)$.
Since $R$ is saturated in $\End E$, the homomorphisms
\[
	\frac{R}{\ell R} \rightarrow \frac{\End E}{\ell(\End E)} \rightarrow \End E(\kbar)[\ell]
\]
are injective, but $\#E(\kbar)[\ell] = \ell^2 = \#(R/\ell R)$, 
so $E(\kbar)[\ell]$ is free of rank~$1$ over $R/\ell R$.
The equality $\#\Hom_{R/\ell R}(N,R/\ell R) = \#N$ holds for $N = \F_\ell$ and $N = \F_\ell[e]/(e^2)$,
so it holds for every finite $(R/\ell R)$-module $N$,
and in particular for $M/\ell M$.
Thus \eqref{E:l-torsion group} implies 
\[
	\#A(\kbar)[\ell] = \#(M/\ell M) = \#(R/\ell R)^r = \#E(\kbar)[\ell]^r.
\]
Thus \eqref{E:l-torsion again} holds for all $\ell$.

 Comparing \eqref{E:l-torsion} and~\eqref{E:l-torsion again}
shows that $\#\Phi[\ell]=1$ for all $\ell$, so $\Phi$ is trivial.
Thus $A=B$, an abelian variety.
\item \label{itm:b}
By Lemma~\ref{L:projective} below,
it suffices to show that if $M \rightarrow P$ is an injection of modules
with $P$ projective, then $\HOM_R(P,E) \rightarrow \HOM_R(M,E)$ is surjective.
We have an exact sequence
\[
	0 \rightarrow \HOM_R(P/M,E) \rightarrow \HOM_R(P,E) \rightarrow \HOM_R(M,E).
\]
By~\eqref{I:A is an abelian variety},
$\HOM_R(P,E)$ and $\HOM_R(M,E)$ are abelian varieties,
so the image $I$ of $\HOM_R(P,E) \rightarrow \HOM_R(M,E)$ is an abelian subvariety of $\HOM_R(M,E)$.
By Proposition~\ref{P:dimension of A}, 
\[
	\dim \HOM_R(P,E) = \dim \HOM_R(P/M,E) + \dim \HOM_R(M,E),
\]
so $\dim I = \dim \HOM_R(M,E)$.
Thus $I = \HOM_R(M,E)$; i.e., $\HOM_R(P,E) \rightarrow \HOM_R(M,E)$ is surjective.
\item
Since $\HOM_R(-,E)$ is exact, it transforms the co-image of $g$ into the image of $f$.
(Co-image equals image in any abelian category, 
though the proof above does not need this.)
\item 
The proof of \cite[Proposition~A.2]{Waterhouse1969} 
shows that $\HOM_R(R/I,E) \isom E[I]$
(there $R$ is \emph{equal} to $\End E$, but this is not used).
The proof of \cite[Proposition~A.3]{Waterhouse1969} 
shows that $E/E[I]$ is the connected component of $\HOM_R(I,E)$,
but $\HOM_R(I,E)$ is already connected, by~\eqref{I:A is an abelian variety}.
\item 
The function $(\#T)^{2/\rk R}$ of $T$ is multiplicative in short exact sequences.
So is $\#\HOM_R(T,E)$, since $\HOM_R(-,E)$ is exact.
Thus we may reduce to the case in which $T$ is simple,
i.e., $T \isom R/I$ for some maximal ideal $I$.
Then $\HOM_R(T,E) = E[I]$ by~\eqref{I:Hom for ideal}.
We have $I \supseteq \ell R$ for some prime $\ell$.
If $I = \ell R$, then $E[I] = E[\ell]$, which has order $\ell^2 = \#(R/I)^{2/\rk R}$.

 Now suppose that $I \ne \ell R$.
If $R$ has rank~$2$, then $\#(R/I)=\ell$;
if $R$ has rank~$4$, then $\#(R/I)=\ell^2$.
Choose $f \in I \setminus \ell R$;
then $f$ does not kill $E[\ell]$, so $E[I] \subsetneq E[\ell]$.
Thus $\#E[I] \le \ell = \#(R/I)^{2/\rk R}$.
Thus $\#\HOM_R(T,E) \le (\#T)^{2/\rk R}$ holds for each Jordan--H\"older factor of $R/\ell R$,
but for $T=R/\ell R$ equality holds,
so all the inequalities must have been equalities.
\item 
Start with the exact sequence
\[
	0 \rightarrow E[n] \rightarrow E \stackrel{n}\rightarrow E.
\]
Given $S \in \calC$, apply the left exact functors $\Hom_{\calC}(S,-)$
and then $\Hom_R(M,-)$; taken for all $S$, this produces an exact sequence of 
representable functors
\[
	0 \rightarrow \HOM_R(M,E[n]) \rightarrow \HOM_R(M,E) \stackrel{n}\rightarrow \HOM_R(M,E).
\]
Hence $\HOM_R(M,E[n]) \isom A[n]$.
\item 
We have
\begin{align*}
	T_\ell A 
	&\colonequals \varprojlim_e A[\ell^e](k_s) \\
	&\isom \varprojlim_e \HOM_R(M,E[\ell^e])(k_s) \qquad\textup{(by~\eqref{I:n-torsion of HOM})}\\
	&\isom \varprojlim_e \Hom_R(M,E[\ell^e](k_s)) \qquad\textup{(by Proposition~\ref{P:functor of points} with $E$ replaced by $E[\ell^e]$)} \\
	&\isom \Hom_R(M,\varprojlim_e E[\ell^e](k_s)) \\
	&=: \Hom_R(M,T_\ell E). &\qedhere
\end{align*}
\end{enumerate}
\end{proof}

The following was used in the proof of Theorem~\ref{T:functor for E}\eqref{I:Hom is exact}.

\begin{lemma}
\label{L:projective}
 Let $\calC$ be an abelian category with enough projectives.
Let $F \colon \calC^{\opp} \rightarrow \calD$ be a left exact functor.
Suppose that for each monomorphism $M \rightarrow P$ with $P$ projective,
the morphism $FP \rightarrow FM$ is an epimorphism.
Then $F$ is exact.
\end{lemma}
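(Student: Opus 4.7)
The plan is to show that $F$ takes every short exact sequence $0 \to M' \to M \to M'' \to 0$ in $\calC$ to a short exact sequence $0 \to FM'' \to FM \to FM' \to 0$ in $\calD$. Left exactness of $F$ already gives exactness through $FM'$, so only the epimorphism $FM \twoheadrightarrow FM'$ needs to be proved.

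The key idea is to approximate the inclusion $M' \hookrightarrow M$ by a monomorphism into a projective, so as to trigger the hypothesis. Using that $\calC$ has enough projectives, I would pick an epimorphism $\pi \colon P \twoheadrightarrow M$ with $P$ projective and set $K = \ker \pi$; then I would form the pullback $N := P \times_M M'$. Standard pullback properties in an abelian category yield that $N \hookrightarrow P$ is a monomorphism, that $N \twoheadrightarrow M'$ is an epimorphism with kernel $K$, and (by a small chase) that $P/N \cong M/M' = M''$. Together these fit into a commutative diagram in $\calC$ with exact rows
\[
\begin{array}{ccccccccc}
0 & \to & N & \to & P & \to & M'' & \to & 0 \\
& & \downarrow & & \downarrow & & \parallel & & \\
0 & \to & M' & \to & M & \to & M'' & \to & 0.
\end{array}
\]

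Now apply $F$. By hypothesis $FP \to FN$ is an epimorphism (since $N \hookrightarrow P$ with $P$ projective), so left exactness upgrades $F$ of the top row to a short exact sequence $0 \to FM'' \to FP \to FN \to 0$; left exactness applied to the bottom row gives $0 \to FM'' \to FM \to FM'$ exact. Functoriality assembles these into the commutative diagram
\[
\begin{array}{ccccccccc}
0 & \to & FM'' & \to & FM & \to & FM' & & \\
& & \parallel & & \downarrow & & \downarrow & & \\
0 & \to & FM'' & \to & FP & \to & FN & \to & 0,
\end{array}
\]
whose two right vertical maps are monomorphisms (again by left exactness). The snake lemma, or equivalently a direct element chase (lift $y \in FM'$ along $FP \twoheadrightarrow FN$ to $x \in FP$; since $FP \to FN \to FK$ coincides with $FP \to FK$ and $FM = \ker(FP \to FK)$, deduce $x \in FM$; then monomorphy of $FM' \hookrightarrow FN$ forces the image of $x$ in $FM'$ to equal $y$), shows that $\coker(FM \to FM')$ injects into $\coker(FP \to FN) = 0$, yielding the desired epimorphism.

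The main conceptual step — which I expect to be the only real obstacle — is recognizing that the pullback $N = P \times_M M'$ is the right object to mediate between $M'$ and $M$; in particular, the identification $P/N \cong M''$ is what makes the leftmost column in the final diagram an identity, which is precisely what causes the snake sequence to collapse. Everything else reduces to routine abelian-category bookkeeping.
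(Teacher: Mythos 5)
Your proof is correct, but it follows a genuinely different route from the paper's. The paper's argument is a two-line derived-functor computation: for any object $A$, take $0 \to K \to P \to A \to 0$ with $P$ projective and read off from the long exact sequence
\[
0 \to FA \to FP \to FK \to (R^1F)A \to (R^1F)P = 0
\]
that $(R^1F)A = 0$, since the hypothesis makes $FP \to FK$ an epimorphism; as this holds for every $A$, the functor is exact. You instead work directly with a given short exact sequence $0 \to M' \to M \to M'' \to 0$, build the pullback $N = P \times_M M'$ of a projective cover $P \twoheadrightarrow M$, and do an explicit diagram chase through the two rows $0 \to N \to P \to M'' \to 0$ and $0 \to M' \to M \to M'' \to 0$ to squeeze surjectivity of $FM \to FM'$ out of surjectivity of $FP \to FN$. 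Your chase is sound (the identifications $P/N \cong M''$, $\ker(N \to M') \cong K$, and the computation that $FP \to FN \to FK$ equals $FP \to FK$ are all standard and correctly used), and it has the virtue of being entirely elementary, avoiding any appeal to the derived-functor formalism. The trade-off is length: the paper's proof is shorter and cleaner once one grants the standard facts that $R^1F$ vanishes on projectives and that vanishing of $R^1F$ is equivalent to exactness, whereas yours requires setting up and chasing through a two-row diagram but rests on nothing beyond pullbacks and left exactness.
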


\begin{proof}
 Given $A \in \calC$, choose an epimorphism $P \rightarrow A$ with $P$ projective,
and let $K$ be the kernel.
The sequence $0 \rightarrow K \rightarrow P \rightarrow A \rightarrow 0$ yields
\[
	0 \rightarrow FA \rightarrow FP \rightarrow FK \rightarrow (R^1 F)A \rightarrow (R^1 F)P = 0,
\]
and the hypothesis implies that $FP \rightarrow FK$ is surjective, so $(R^1 F)A=0$.
This holds for all $A$, so $F$ is exact.
\end{proof}

\begin{remark}
The hypothesis that $R$ is saturated in Theorem~\ref{T:functor for E}
cannot be dropped.
For example, if $E$ is an elliptic curve over $\CC$ with $\End E = \Z[i]$,
and $R$ is the subring $\Z[2i]$,
then the $R$-module $\Z[i]$ has a presentation 
\[
	R \stackrel{\begin{pmatrix} 2i \\ -2 \end{pmatrix}}\To R^2 \stackrel{\begin{pmatrix} 1 & i \end{pmatrix}}\To \Z[i] \To 0,
\]
so by definition,
%\[
%	\HOM_R(\Z[i],E) 
%	\;\isom\; \ker \left( \raisebox{-1.2ex}{$E^2 \stackrel{\begin{pmatrix} 2i &
%            -2 \end{pmatrix}} \To E$} \right)
%	\;\isom\; E \times E[2],
%\]
%
\[
	\HOM_R(\Z[i],E) 
	\;\isom\; \ker \raisebox{1ex}{$\left( \raisebox{-1.2ex}{$E^2 \stackrel{\begin{pmatrix} 2i &
            -2 \end{pmatrix}} \To E$} \right)$}
	\;\isom\; E \times E[2],
\]
which is not an abelian variety.
Moreover, applying $\HOM_R(-,E)$ to the injection $\Z[i] \stackrel{2}\rightarrow R$
yields a homomorphism $E \rightarrow E \times E[2]$, which is not surjective,
so $\HOM_R(-,E)$ is not exact.
Finally, $\Z[i]$ is isomorphic as $R$-module to the $R$-ideal $I\colonequals 2\Z[i]$,
so $\HOM_R(I,E)$ is not an abelian variety.
\end{remark}

\subsection{Duality of abelian varieties}
\label{S:duality}

 Let $E$ be an elliptic curve over a field $k$.
Let $R\colonequals \End E$.
The \emph{Rosati involution}, sending an endomorphism to its dual,
is an isomorphism $R \rightarrow R^{\opp}$.
If $M$ is a left $R$-module,
then $M^*\colonequals \Hom_R(M,R)$ 
(the group of homomorphisms of left $R$-modules)
is a \emph{right} $R$-module: given $f \in M^*$ and $r \in R$,
let $f \cdot r$ be the composition $M \stackrel{f}\rightarrow R \stackrel{r}\rightarrow R$.
In other words, $M^*$ is a left $R^{\opp}$-module,
which we may view as a left $R$-module by using the Rosati involution.
Moreover, if $M$ is f.p., then it is finite over $\Z$,
and then so is $M^*$.
Also, $M^*$ is torsion-free.

 Given an abelian variety $A$, let $A^\vee$ be the dual abelian variety.
The following lets us understand the duals of abelian varieties arising from modules.

\begin{theorem}
\label{T:dual abelian variety}
Given a f.p.\ torsion-free left $R$-module $M$,
we have
\[
	\HOM_R(M,E)^\vee \isom \HOM_R(M^*,E),
\]
functorially in $M$.
\end{theorem}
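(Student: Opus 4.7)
The plan is to verify the isomorphism first for free modules, where both sides reduce to the principal polarization, and then to extend to general $M$ by choosing a presentation, using the exactness of $\HOM_R(-,E)$ on torsion-free modules to control both sides.

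First I would handle $M = R$. The right $R$-module $R^* = \Hom_R(R,R)$, converted to a left $R$-module via the Rosati involution, is canonically isomorphic to $R$ via the map $r \mapsto (x \mapsto xr^\dagger)$; hence $\HOM_R(R^*,E) \isom \HOM_R(R,E) = E$, while $\HOM_R(R,E)^\vee = E^\vee$, and these are identified by the principal polarization $\lambda_E \colon E \rightarrow E^\vee$. Taking direct sums gives the theorem for $M = R^n$, with the product polarization. The compatibility to check is that the Rosati involution on $R \subseteq \End E$ corresponds to dualization $\phi \mapsto \phi^\vee$ under $\lambda_E$; this is essentially the definition of Rosati and ensures the resulting identification respects the left $R$-action.

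For general torsion-free f.p.\ $M$, choose a presentation $R^a \stackrel{X}{\rightarrow} R^b \rightarrow M \rightarrow 0$ with $X$ a matrix over $R$. Applying $\HOM_R(-,E)$ and setting $B \colonequals \im(X \colon E^b \rightarrow E^a)$ gives a short exact sequence $0 \rightarrow A \rightarrow E^b \rightarrow B \rightarrow 0$ with $A \colonequals \HOM_R(M,E)$, which dualizes to $0 \rightarrow B^\vee \rightarrow (E^b)^\vee \rightarrow A^\vee \rightarrow 0$. Using the componentwise principal polarizations to identify $(E^b)^\vee \isom E^b$ and $(E^a)^\vee \isom E^a$, the composite $(E^a)^\vee \twoheadrightarrow B^\vee \hookrightarrow (E^b)^\vee$ becomes a morphism $E^a \rightarrow E^b$ given by the matrix $X^\dagger$ obtained by transposing $X$ and applying Rosati entrywise. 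Hence $A^\vee \isom E^b / \im(X^\dagger \colon E^a \rightarrow E^b)$.

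Finally I would compute $\HOM_R(M^*,E)$ and match. Applying $\Hom_R(-,R)$ to the presentation of $M$ and converting to left $R$-modules via Rosati yields a left exact sequence $0 \rightarrow M^* \rightarrow R^b \stackrel{X^\dagger}{\rightarrow} R^a$. Setting $Q \colonequals \im(X^\dagger \colon R^b \rightarrow R^a) \subseteq R^a$, which is torsion-free as a submodule of $R^a$, we obtain a short exact sequence $0 \rightarrow M^* \rightarrow R^b \rightarrow Q \rightarrow 0$ of f.p.\ torsion-free left $R$-modules. Applying $\HOM_R(-,E)$ (exact on torsion-free modules by Theorem~\ref{T:functor for E}\eqref{I:Hom is exact}) gives $0 \rightarrow \HOM_R(Q,E) \rightarrow E^b \rightarrow \HOM_R(M^*,E) \rightarrow 0$, and Theorem~\ref{T:functor for E}\eqref{I:image of E^r --> E^s} identifies $\HOM_R(Q,E)$ with $\im(X^\dagger \colon E^a \rightarrow E^b)$. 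Combining with the previous paragraph produces the claimed isomorphism, and naturality in $M$ follows from the functoriality of both constructions together with the canonicity of the free-module identification. The main obstacle will be the Rosati bookkeeping: one must verify carefully that dualization of morphisms between powers of $E$ corresponds, after identifying $E \isom E^\vee$ via $\lambda_E$, to transposition combined with Rosati on matrix entries. Entrywise, this reduces to the definition $r^\dagger = \lambda_E^{-1} \circ r^\vee \circ \lambda_E$.
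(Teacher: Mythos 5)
Your proposal is correct and follows essentially the same route as the paper's proof: pick a presentation, apply $\HOM_R(-,E)$ and abelian-variety duality on one side, apply $\Hom_R(-,R)$ followed by $\HOM_R(-,E)$ on the other, and match the two cokernel descriptions via the Rosati involution. You make explicit two points the paper glosses over — naming the images $B$ and $Q$ so that the exactness of $\HOM_R(-,E)$ on torsion-free modules can be invoked cleanly, and tracking the identification $E \isom E^\vee$ via $\lambda_E$ with the relation $r^\dagger = \lambda_E^{-1}\circ r^\vee\circ\lambda_E$ — but the underlying argument is the same.
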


\begin{proof}
 Let $M$ be a f.p.\ torsion-free left $R$-module.
Choose a presentation
\begin{equation}
\label{E:presentation for duality}
	R^n \stackrel{P}\To R^m \To M \To 0,
\end{equation}
where $P \in \M_{m \times n}(R)$.
Apply $\HOM_R(-,E)$ to obtain
\[
	0 \To A \To E^m \stackrel{P^T}\To E^n 
\]
where $P^T$ is the transpose of $P$.
Taking dual abelian varieties yields
\begin{equation}
\label{E:A^vee for duality}
	E^n \stackrel{P^{\dagger}}\To E^m \To A^\vee \To 0,
\end{equation}
where $P^{\dagger}$ is obtained from $P$ by applying the Rosati involution entrywise.

On the other hand, applying $\Hom_R(-,R)$ to~\eqref{E:presentation for duality} yields
\[
	0 \To M^* \To R^m \stackrel{P^{T\dagger}}\To R^n 
\]
and applying $\HOM_R(-,E)$ yields
\[
	E^n \stackrel{P^{\dagger}}\To E^m \To \HOM_R(M^*,E) \To 0.
\]
Comparing with~\eqref{E:A^vee for duality} shows that 
\begin{equation}
\label{E:final duality}
	\HOM_R(M^*,E) \isom A^\vee = \HOM_R(M,E)^\vee.
\end{equation}

 Given a homomorphism of f.p.\ torsion-free left $R$-modules $M \stackrel{f}\rightarrow N$,
we can build a commutative diagram
\[
\xymatrix{
	R^n \ar[r] \ar[d] & R^m \ar[r] \ar[d] & M \ar[r] \ar[d]^f & 0 \\
	R^i \ar[r] & R^j \ar[r] & N \ar[r] & 0 \\
}
\]
and apply the constructions above to show that \eqref{E:final duality}
is functorial in $M$.
\end{proof}

\subsection{The other Hom functor}

Under the assumptions of Theorem~\ref{T:functor for E}\eqref{I:Hom is exact},
we have a functor of additive categories
\begin{align*}
	\HOM_R(-,E) \colon & \{ \textup{f.p.\ torsion-free left $R$-modules} \}^{\opp} \\
	& \rightarrow
	\{\textup{abelian varieties isogenous to a power of $E$}\},
\end{align*}
as promised in the introduction.
% ``introduction'' should not be capitalized.
{}From now on, $\HOM_R(-,E)$ denotes this functor,
restricted to f.p.\ \emph{torsion-free} left $R$-modules.

 Given an abelian variety $A$ over the same field as $E$,
the abelian group $\Hom(A,E)$ (the group of homomorphisms of abelian varieties) 
is a left \hbox{$(\End E)$-module},
and hence also a left $R$-module, 
and it is f.p.\ because it is f.p.\ over $\Z$ \cite[p.~178, Corollary~1]{Mumford1970}.
In fact, we get a functor in the opposite direction:
\begin{align*}
	\Hom(-,E) \colon & \{\textup{abelian varieties isogenous to a power of $E$}\} \\
	& \rightarrow
	\{ \textup{f.p.\ torsion-free left $R$-modules} \}^{\opp} .
\end{align*}

For which elliptic curves $E$ are $\HOM_R(-,E)$ and $\Hom(-,E)$
inverse equivalences of categories?
If we start with the $R$-module $R$ and apply $\HOM_R(-,E)$ and then $\Hom(-,E)$,
we obtain $\End E$,
so we should have $R \isom \End E$ as $R$-modules;
then the only $R$-module endomorphisms of $\End E$
are given by multiplication by elements of $R$,
but multiplication by elements of $\End E$ also give endomorphisms,
so $R = \End E$.
Thus we assume from now on that $R = \End E$.

\begin{theorem}
\label{T:maximal order equivalence}
 Let $E$ be an elliptic curve over a field.
Let $R\colonequals \End E$.
Then 
\begin{enumerate}[\upshape (a)]
\item\label{I:fully faithful}
The functor $\HOM_R(-,E)$ is fully faithful.
\item\label{I:inverse on essential image}
The functor $\Hom(-,E)$ 
on the image of $\HOM_R(-,E)$
is an inverse to $\HOM_R(-,E)$.
\item \label{I:essential image consists of products}
The image of $\HOM_R(-,E)$
 consists exactly of the products 
of elliptic curves of the form $\HOM_R(I,E)$ for a nonzero left $R$-ideal $I$. 
\end{enumerate}
\end{theorem}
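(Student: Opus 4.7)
My strategy is to reduce parts (a) and (b) to a single natural isomorphism $\Hom(\HOM_R(N,E), E) \cong N$ for every f.p.\ torsion-free left $R$-module $N$, and then to obtain (c) from the classification of such modules in Section~\ref{S:classifying torsion-free modules}.

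The key identity reduces to the case of a single ideal. When $N = R$ both sides equal $\End E = R$, and additivity (both $\HOM_R(-,E)$ and $\Hom(-,E)$ carry finite direct sums to products) handles $N = R^n$. For $N = I$ a nonzero left $R$-ideal, Theorem~\ref{T:functor for E}\eqref{I:Hom for ideal} gives $\HOM_R(I,E) \cong E/E[I]$, and precomposition with the quotient $E \to E/E[I]$ embeds $\Hom(E/E[I], E)$ into $\End E = R$ as the submodule $\{\alpha \in R : \alpha E[I] = 0\}$. The containment $I \subseteq \{\alpha : \alpha E[I] = 0\}$ is tautological; the reverse amounts to the faithfulness assertion that the annihilator of $E[I]$ in $R$ equals $I$. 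I would verify this one prime $\ell$ at a time: the proof of Theorem~\ref{T:functor for E}\eqref{I:A is an abelian variety} already shows that $T_\ell E$ is free of rank one over $R \tensor \Z_\ell$ when $R$ is a quadratic order, analogous statements hold in the quaternionic case via Morita equivalence, and the case $\ell = \Char k$ is handled using the $p$-divisible group of $E$. Taking the intersection over all $\ell$ recovers $\alpha \in I$. To pass to an arbitrary $N$, I would invoke the structure theorems~\ref{T:modules over Dedekind domain}, \ref{T:modules over quadratic order}, and~\ref{T:modules over quaternion order}, which express $N$ as a finite direct sum of fractional left $R$-ideals; additivity then bootstraps the ideal case to the general statement.

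Granted the key identity, (a) is formal. The defining universal property of $\HOM_R(M,E)$ produces a natural isomorphism $\Hom_\calC(S, \HOM_R(M,E)) \cong \Hom_R(M, \Hom_\calC(S,E))$ for every $S \in \calC$; taking $S = \HOM_R(N,E)$ and applying the key identity to $N$ yields $\Hom(\HOM_R(N,E), \HOM_R(M,E)) \cong \Hom_R(M,N)$, and functoriality identifies this composite with the map induced by $\HOM_R(-,E)$. Part (b) is immediate: the key identity says $\Hom(-,E) \circ \HOM_R(-,E)$ is naturally isomorphic to the identity on f.p.\ torsion-free modules, while for $A = \HOM_R(M,E)$ the key identity applied to $M$ gives $\HOM_R(\Hom(A,E), E) \cong \HOM_R(M,E) = A$, naturally in $A$.

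Part (c) then follows from the same direct-sum decomposition $N \cong \bigoplus_j I_j$ into fractional $R$-ideals, together with additivity of $\HOM_R(-,E)$ and the fact (Proposition~\ref{P:dimension of A}) that each $\HOM_R(I_j,E)$ is an elliptic curve, since $\dim \HOM_R(I_j,E) = (\rk I_j)(\dim E) = 1$; the reverse containment is tautological. The principal obstacle I anticipate is the faithfulness step in the proof of the key identity, particularly when $R$ is a non-maximal quadratic order and $I$ fails to be invertible; this is presumably where the authors invoke the result of Kani~\cite{Kani2011} mentioned in the introduction.
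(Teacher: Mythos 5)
Your strategy --- collapsing (a) and (b) to the single natural isomorphism $\Hom(\HOM_R(N,E),E)\cong N$, i.e.\ that the unit map $N\to\Hom(\HOM_R(N,E),E)$ coming from the universal property is an isomorphism --- is sound and genuinely different from what the paper does. The paper never isolates this identity; it reduces directly to showing $\Hom_R(J,I)\to\Hom(\HOM_R(I,E),\HOM_R(J,E))$ is an isomorphism for pairs of ideals, then simply cites Kani's \cite[Proposition~17, (48)]{Kani2011} for the \emph{entire} rank-$2$ case (not just non-invertible ideals, as you guess at the end), uses triviality for $R=\Z$, and uses projectivity ($\Rightarrow$ direct summand of a free module) to dispatch the quaternionic case. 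Your derivation of (a) from the key identity via the representability of $S\mapsto\Hom_R(M,\Hom(S,E))$, and the reductions for (b) and (c), are all correct once the key identity is in hand.

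The gap is in your justification of the key identity for a single ideal $I$, i.e.\ of the equality $\Ann_R(E[I])=I$. First, the freeness statement you lean on is false as stated: in the quaternionic case $R\tensor\Z_\ell\cong\M_2(\Z_\ell)$ has $\Z_\ell$-rank~$4$ while $T_\ell E$ has $\Z_\ell$-rank~$2$, so $T_\ell E$ is certainly not free of rank one over $R_\ell$ (indeed the paper's Lemma~\ref{L:Tate module is free} is deliberately phrased in terms of $(T_\ell E)^2$). Second, and more fundamentally, even when $T_\ell E$ \emph{is} free of rank one over $R_\ell$ (the quadratic case), freeness only translates the problem into a ring-theoretic statement: it identifies $E[\ell^e]$ with $R/\ell^e R$ and reduces $\Ann_R(E[I])\cap(\ell^e)=I\cap(\ell^e)$ to a double-annihilator identity $\Ann_{R/\ell^e R}(\Ann_{R/\ell^e R}(\bar I))=\bar I$. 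That identity is \emph{not} automatic; it is precisely the quasi-Frobenius/Gorenstein property of $R/\ell^e R$ (cf.\ the self-duality exploited in Lemma~\ref{L:C/ell^e C module injects}), and it is the substantive input your sketch leaves implicit. A similar Morita-plus-self-duality argument is needed at $\ell\ne p$ in the quaternionic case, and the $p$-divisible group case is genuinely separate (the ordinary case splits into connected and \'etale parts, the supersingular case involves a single formal group). So the route is viable and arguably more self-contained than the paper's appeal to Kani, but as written it asserts a false freeness claim and omits the Gorenstein step that is really carrying the argument; this needs to be supplied before the proof closes.
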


\begin{proof}
\hfill
\begin{enumerate}[\upshape (a)]
\item
 The ring $R$ is $\Z$, a quadratic order, or a maximal quaternionic order.
By Theorem~\ref{T:modules over Dedekind domain}, 
Theorem~\ref{T:modules over quadratic order}\eqref{I:chain of quadratic orders}, 
or Theorem~\ref{T:modules over quaternion order}\eqref{I:direct sum of ideals over O},
respectively,
every f.p.\ torsion-free left $R$-module is a finite direct sum
of nonzero left $R$-ideals.
Thus, \eqref{I:fully faithful} follows if for any two nonzero $R$-ideals $I$ and $J$,
the natural map
\[
	\Hom_R(J,I) \To \Hom(\HOM_R(I,E),\HOM_R(J,E))
\]
is an isomorphism.
If $R=\Z$, this is trivial.
If $R$ is a quadratic order, this is the elliptic curve case of the  
isomorphism given in~(48) in~\cite[Proposition~17]{Kani2011}.
% (48) is not an equation
If $R$ is a maximal quaternionic order, then by 
Theorem~\ref{T:modules over quaternion order}\eqref{I:torsion-free equals projective over O} 
all f.p.\ torsion-free left $R$-modules are projective,
i.e., direct summands of f.p.\ free left $R$-modules;
since $\HOM_R(-,E)$ is fully faithful when restricted to free modules,
it is also fully faithful on projective modules.
\item
This is a general property of fully faithful functors.
\item 
As remarked in the proof of~\eqref{I:fully faithful},
every f.p.\ torsion-free left $R$-module is a finite direct sum 
of nonzero left $R$-ideals $I$.
\qedhere
\end{enumerate}
\end{proof}

\section{Maximal abelian varieties over \texorpdfstring{\protect{\boldmath{$\F_{p^2}$}}}{Fp2}}
\label{S:new proof}

 Fix a prime $p$.
Call an abelian variety $A$ over $\F_{p^2}$ \emph{maximal}
if $A$ has the maximum possible number of $\F_{p^2}$-points
for its dimension, namely $(p+1)^{2 \dim A}$.

\begin{proposition}
\label{P:characterization of maximality}
 Let $A$ be a $g$-dimensional abelian variety over $\F_{p^2}$.
Let $\ell$ be a prime not equal to $p$.
The following are equivalent\textup{:}
\begin{enumerate}[\upshape (a)]
\item\label{I:A is maximal} 
The abelian variety $A$ is maximal; i.e., $\# A(\F_{p^2}) = (p+1)^{2g}$.
\item The characteristic polynomial of $\pi_A$ on $T_\ell A$ equals $(x+p)^{2g}$.
\item\label{I:pi_A=p} We have $\pi_A=-p$.
\item We have $A(\F_{p^2}) \isom (\Z/(p+1)\Z)^{2g}$ as abelian groups.
\setcounter{tempenum}{\value{enumi}}
\end{enumerate}
If $E$ is a fixed maximal elliptic curve over $\F_{p^2}$, 
then the following also is equivalent to the above:
\begin{enumerate}[\upshape (a)]
\setcounter{enumi}{\value{tempenum}}
\item\label{I:A and E^g}
The abelian variety $A$ is isogenous to $E^g$.
\end{enumerate}
\end{proposition}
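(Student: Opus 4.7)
The plan is to prove the cycle (a) $\Rightarrow$ (b) $\Rightarrow$ (c) $\Rightarrow$ (d) $\Rightarrow$ (a), and then to handle (e) separately via Tate's isogeny theorem. Write $f_A(x) = \prod_{i=1}^{2g}(x - \alpha_i)$ for the characteristic polynomial of $\pi_A$ on $T_\ell A$. The standard identity $\#A(\F_{p^2}) = f_A(1) = \prod_i(1-\alpha_i)$ together with the Weil bound $|\alpha_i| = p$ gives $|1-\alpha_i| \le 1+p$ with equality if and only if $\alpha_i = -p$; taking the product yields the implication (a) $\Rightarrow$ (b), and the reverse direction (b) $\Rightarrow$ (a) is immediate by plugging in $x=1$. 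For (c) $\Rightarrow$ (d), if $\pi_A = -p$ then $A(\F_{p^2}) = \ker(\pi_A - 1) = A[p+1]$; since $\gcd(p+1,p)=1$ this kernel is étale of order $(p+1)^{2g}$, and $\pi_A \equiv 1 \pmod{p+1}$ shows all geometric points are $\F_{p^2}$-rational, so $A(\F_{p^2}) \isom (\Z/(p+1)\Z)^{2g}$. Finally (d) $\Rightarrow$ (a) is just a count.

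The substantive step is (b) $\Rightarrow$ (c). I invoke two of Tate's theorems for abelian varieties over finite fields: first, that $\pi_A$ acts \emph{semisimply} on $V_\ell A \colonequals T_\ell A \tensor_{\Z_\ell} \Q_\ell$; second, that the natural map $\End(A) \tensor \Q_\ell \rightarrow \End_{\Q_\ell}(V_\ell A)$ is injective. Assumption~(b) says the only eigenvalue of $\pi_A$ on $V_\ell A$ is $-p$, and combined with semisimplicity this forces $\pi_A = -p$ as an operator on $V_\ell A$. Injectivity then gives $\pi_A + p = 0$ in $\End(A) \tensor \Q_\ell$, and since $\End(A)$ is torsion-free as a $\Z$-module, we conclude $\pi_A = -p$ in $\End(A)$ itself. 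The opposite direction (c) $\Rightarrow$ (b) is immediate from functoriality of $T_\ell$.

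For the equivalence with (e) under the assumption that a maximal elliptic curve $E/\F_{p^2}$ is fixed, note that $E$ maximal forces $\pi_E = -p$ by what we have already proved, so the characteristic polynomial of $\pi_{E^g}$ on $T_\ell E^g$ is $(x+p)^{2g}$. Hence (c) implies that $A$ and $E^g$ have the same characteristic polynomial of Frobenius, and Tate's isogeny theorem gives (e). Conversely, isogenous abelian varieties over a finite field have the same zeta function, hence the same number of $\F_{p^2}$-points, so (e) $\Rightarrow$ (a). The main obstacle is the implication (b) $\Rightarrow$ (c), whose only nontrivial content is the appeal to Tate's semisimplicity and full-faithfulness results; everything else is Weil-bound bookkeeping and a direct computation of $\ker(\pi_A - 1)$.
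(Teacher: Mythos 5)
Your proof is correct and follows essentially the same route as the paper: the Weil-bound argument for (a)$\Rightarrow$(b), semisimplicity of Frobenius on the Tate module plus injectivity of $\End A \to \End T_\ell A$ for (b)$\Rightarrow$(c), identification of $A(\F_{p^2})$ with $A[p+1]$ for (c)$\Rightarrow$(d), and Tate's isogeny theorem for (e). Your (c)$\Rightarrow$(d) step is in fact slightly more careful than the paper's one-line version, spelling out why the full $(p+1)$-torsion is rational over $\F_{p^2}$; otherwise the two arguments coincide.
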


\begin{proof}\hfill
\begin{enumerate}[\upshape (a) \hspace{-0.75 em}]
\item  $\Rightarrow$(b): 
Let $\lambda_1,\ldots,\lambda_{2g} \in \Qbar$ be the eigenvalues of $\pi_A$ acting on $T_\ell A$.
Then $|\lambda_i|=p$ and $\#A(\F_{p^2}) = \prod (1-\lambda_i) = \prod |1-\lambda_i| \le (p+1)^{2g}$;
if equality holds, then $\lambda_i=-p$ for all $i$.
Thus the characteristic polynomial is $(x+p)^{2g}$.

\item  $\Rightarrow$(c):
Since $\pi_A$ is determined by its action on $T_\ell A$,
which is semisimple \cite[pp.~203--206]{Mumford1970}, 
we obtain $\pi_A=-p$.

\item  $\Rightarrow$(d):
We have
\[
	A(\F_{p^2}) = \ker(\pi_A-1) = \ker(-p-1) = A[p+1] \isom (\Z/(p+1)\Z)^{2g}.
\]

\item  $\Rightarrow$(a): Trivial.

\item $\Leftrightarrow$(b): By (a)$\Rightarrow$(b),
the characteristic polynomial of $\pi_E$ is $(x+p)^2$,
so the characteristic polynomial of $\pi_{E^g}$ is $(x+p)^{2g}$.
Two abelian varieties over a finite field are isogenous 
if and only if their characteristic polynomials 
are equal \cite[Theorem~1(c)]{Tate1966}.
\qedhere
\end{enumerate}
\end{proof}

\begin{lemma}
\label{L:homomorphisms defined over Fp^2}
 If $A$ and $B$ are maximal abelian varieties over $\F_{p^2}$,
then any homomorphism $A_{\Fbar_p} \rightarrow B_{\Fbar_p}$
is the base extension of a homomorphism $A \rightarrow B$.
\end{lemma}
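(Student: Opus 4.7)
My approach is to use Galois descent together with the sharp information about Frobenius provided by Proposition~\ref{P:characterization of maximality}.

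First, I will invoke the standard descent criterion: the natural map $\Hom(A,B) \to \Hom(A_{\Fbar_p}, B_{\Fbar_p})$ identifies $\Hom(A,B)$ with the subgroup of $\Gal(\Fbar_p/\F_{p^2})$-fixed homomorphisms. This Galois group is topologically generated by the $p^2$-power Frobenius $\sigma$, which acts on $A(\Fbar_p)$ and on $B(\Fbar_p)$ by the Frobenius endomorphisms $\pi_A$ and $\pi_B$ respectively. A direct computation on $\Fbar_p$-points (using that such points determine a morphism of finite-type $\Fbar_p$-schemes) shows that a homomorphism $f \in \Hom(A_{\Fbar_p}, B_{\Fbar_p})$ is Galois-fixed if and only if
$$\pi_B \circ f = f \circ \pi_A$$
as morphisms of abelian varieties over $\Fbar_p$.

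Next, I will plug in the Frobenius values supplied by maximality. Proposition~\ref{P:characterization of maximality}\eqref{I:pi_A=p} gives $\pi_A = [-p]_A$ and $\pi_B = [-p]_B$. Since any homomorphism of abelian varieties commutes with multiplication by an integer, we get
$$\pi_B \circ f = [-p]_B \circ f = f \circ [-p]_A = f \circ \pi_A,$$
so the criterion is satisfied and $f$ descends to $\F_{p^2}$.

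The only delicate point is setting up the Galois-invariance criterion correctly (in particular, getting the direction of composition right), but this is a classical fact about abelian varieties over finite fields and does not present any real obstacle. With Proposition~\ref{P:characterization of maximality} in hand, the entire argument reduces to the observation that integer multiplications are central in the ring of homomorphisms between abelian varieties.
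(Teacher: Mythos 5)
Your proof is correct and follows essentially the same route as the paper: the paper's one-line argument is exactly that $\pi_A=\pi_B=-p$ (via Proposition~\ref{P:characterization of maximality}) and that any homomorphism commutes with integer multiplication, hence is Frobenius-equivariant and descends. You have simply spelled out the Galois-descent bookkeeping that the paper leaves implicit.
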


\begin{proof}
 Any homomorphism respects the $p^2$-power Frobenius endomorphisms (both are equal to $-p$),
and hence descends to $\F_{p^2}$.
\end{proof}

 Every supersingular elliptic curve over $\Fbar_p$
admits a unique model over $\F_{p^2}$ that is maximal:
the existence is \cite[Lemma~3.21]{Baker-et-al2005},
and uniqueness follows from Lemma~\ref{L:homomorphisms defined over Fp^2}.
In particular, maximal elliptic curves over $\F_{p^2}$ exist.
If $E$ is any such curve, then $E$ is supersingular,
and Lemma~\ref{L:homomorphisms defined over Fp^2} implies
that $\End E = \End E_{\Fbar_p}$, which is a maximal order $\calO$
in a quaternion algebra over $\Q$ ramified at $p$ and $\infty$.
Also, the kernel of the $p$-power Frobenius morphism $E \rightarrow E^{(p)}$
is isomorphic to $\boldalpha_p$.

By Proposition~\ref{P:characterization of maximality}\eqref{I:A is maximal}$\Rightarrow$\eqref{I:A and E^g},
any maximal abelian variety $A$ over $\F_{p^2}$ is isogenous to a power of $E$.
The main result of this section strengthens this as follows:

\begin{theorem}
\label{T:category of maximal abelian varieties}
\hfill
\begin{enumerate}[\upshape (a)]
\item \label{I:maximal is product}
 Every maximal abelian variety $A$ over $\F_{p^2}$
is \emph{isomorphic} to a product of maximal elliptic curves over $\F_{p^2}$.
\item \label{I:equivalence of categories of maximal abelian varieties}
 Fix a maximal elliptic curve $E$ over $\F_{p^2}$.
Let $\calO \colonequals \End E$.
Then the functors $\HOM_{\calO}(-,E)$ and $\Hom(-,E)$
are inverse equivalences of categories.
Also, the categories involved can be rewritten so that $\HOM_R(-,E)$ becomes
\begin{align*}
	\HOM_{\calO}(-,E) \colon & \{ \textup{f.p.\ projective left $\calO$-modules} \}^{\opp} \\
	& \longisomto
	\{ \textup{maximal abelian varieties$/\F_{p^2}$} \}.
\end{align*}
\item \label{I:isomorphic to E^g} 
 Fix a maximal elliptic curve $E$ over $\F_{p^2}$.
Let $g \ge 2$.
Every $g$-dimensional maximal abelian variety over $\F_{p^2}$ is isomorphic to $E^g$.
In particular, any product of $g$ maximal elliptic curves over $\F_{p^2}$
is isomorphic to any other.
\end{enumerate}
The analogous results hold if maximal is replaced by \emph{minimal}\textup{;}
i.e., we consider abelian varieties $A$ over $\F_{p^2}$
such that $\#A(\F_{p^2}) = (p-1)^{2 \dim A}$.
\end{theorem}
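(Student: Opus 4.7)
The plan is to prove (b) first, from which (a) and (c) follow via Theorem~\ref{T:modules over quaternion order}. The functor $\HOM_{\calO}(-,E)$ on f.p.\ projective left $\calO$-modules (equal to f.p.\ torsion-free by Theorem~\ref{T:modules over quaternion order}\eqref{I:torsion-free equals projective over O}) is fully faithful by Theorem~\ref{T:maximal order equivalence}\eqref{I:fully faithful}. Its image is contained in the maximal abelian varieties: any projective $M$ is a direct summand of $\calO^n$ for some $n$ (Theorem~\ref{T:modules over quaternion order}\eqref{I:direct sum of ideals over O}), so $\HOM_\calO(M,E)$ is a direct factor of $\HOM_\calO(\calO^n,E) = E^n$; since $\pi_{E^n} = -p$ and Frobenius respects direct factor decompositions, the image is maximal by Proposition~\ref{P:characterization of maximality}.

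For essential surjectivity, given a maximal $A$ of dimension $g$, I would set $M \colonequals \Hom(A,E)$ and show the natural evaluation map $\mathrm{ev}\colon A \to \HOM_\calO(M,E)$, given on points by $a \mapsto (f \mapsto f(a))$, is an isomorphism. The module $M$ is f.p.\ and torsion-free over $\Z$ (hence projective by Theorem~\ref{T:modules over quaternion order}\eqref{I:torsion-free equals projective over O}) and has $\calO$-rank $g$, since Proposition~\ref{P:characterization of maximality} gives $A \sim E^g$, forcing $M \otimes \Q \cong B^g$. By Proposition~\ref{P:dimension of A}, $\dim \HOM_\calO(M,E) = g = \dim A$, so $\mathrm{ev}$ is a homomorphism between equidimensional abelian varieties.

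To show $\mathrm{ev}$ is an isomorphism, I would compare Tate modules at each prime. For $\ell \ne p$, Theorem~\ref{T:functor for E}\eqref{I:Tate module of HOM} identifies $T_\ell \HOM_\calO(M,E) = \Hom_\calO(M, T_\ell E)$. Tate's theorem on homomorphisms over finite fields, combined with the observation that Frobenius acts as the scalar $-p$ on both $T_\ell A$ and $T_\ell E$ (and hence commutes with every $\Z_\ell$-linear map), gives $M \otimes \Z_\ell \cong \Hom_{\Z_\ell}(T_\ell A, T_\ell E)$. Since $\calO \otimes \Z_\ell \cong \M_2(\Z_\ell)$ with $T_\ell E$ its unique simple module, Morita double-duality yields $\Hom_{\calO \otimes \Z_\ell}(\Hom_{\Z_\ell}(T_\ell A, T_\ell E), T_\ell E) \cong T_\ell A$, and this identification matches $T_\ell \mathrm{ev}$ to an isomorphism. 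At $\ell = p$, the analogous argument uses the Dieudonn\'e module analog of Tate's theorem and the structure of $\calO \otimes \Z_p$ as the maximal order in the $p$-adic quaternion division algebra.

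With (b) established, (a) and (c) follow. For (a), decompose $M = I_1 \directsum \cdots \directsum I_g$ by Theorem~\ref{T:modules over quaternion order}\eqref{I:direct sum of ideals over O}; then $A \cong \prod_j \HOM_\calO(I_j, E)$ exhibits $A$ as a product of maximal elliptic curves, each factor being maximal by the direct summand argument above. For (c), Theorem~\ref{T:modules over quaternion order}\eqref{I:free over quaternion order} forces $M \cong \calO^g$ when $g \ge 2$, so $A \cong \HOM_\calO(\calO^g, E) = E^g$. The minimal case is entirely symmetric (replace $-p$ with $p$). The main obstacle is the $p$-adic step of the Tate module argument, which requires the $p$-adic analog of Tate's theorem for $p$-divisible groups over finite fields together with an explicit Morita-type understanding of the Dieudonn\'e module of $E[p^\infty]$ as a module over $\calO \otimes \Z_p$.
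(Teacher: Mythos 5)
Your route is genuinely different from the paper's, and it has a real gap at $p$ that you yourself flag. The paper proves part~\eqref{I:maximal is product} \emph{first}, by a geometric minimal-degree argument: among all isogenies $\phi\colon B\to A$ with $B$ a product of maximal elliptic curves, take one of least degree; if $\deg\phi>1$, Lemma~\ref{L:supersingular isogeny factorization} and Lemma~\ref{L:defined over F_{p^2}} produce a subgroup $H\subseteq\ker\phi$ isomorphic to $\boldalpha_p$ or cyclic of prime order $\ell\ne p$, and Corollary~\ref{C:quotient is still product} shows $B/H$ is again a product of maximal elliptic curves, contradicting minimality. Once \eqref{I:maximal is product} is known, \eqref{I:equivalence of categories of maximal abelian varieties} follows cheaply from Theorem~\ref{T:maximal order equivalence} plus Waterhouse's classical observation that every elliptic curve isogenous to $E$ is $\HOM_\calO(I,E)$ for some left ideal $I$; no Tate-module computation is needed. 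You instead try to prove \eqref{I:equivalence of categories of maximal abelian varieties} directly by showing the evaluation map $A\to\HOM_\calO(\Hom(A,E),E)$ is an isomorphism, prime by prime.

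Your argument at $\ell\ne p$ is sound: Frobenius acts as the scalar $-p$, so Tate's theorem gives $\Hom(A,E)\tensor\Z_\ell\isom\Hom_{\Z_\ell}(T_\ell A,T_\ell E)$, and since $\calO\tensor\Z_\ell\isom\M_2(\Z_\ell)$ is a matrix algebra with $T_\ell E$ its unique indecomposable projective, Morita double duality recovers $T_\ell A$. But at $\ell=p$ this breaks down, precisely because $\calO\tensor\Z_p$ is a maximal order in the quaternion \emph{division} algebra, not a matrix algebra, so the Morita equivalence with $\Z_p$-modules is lost. One would need a Tate-style theorem identifying $\Hom(A,E)\tensor\Z_p$ with Dieudonn\'e-module homomorphisms, together with an explicit analysis showing that $D(E[p^\infty])$ (a rank-$2$ $W(\F_{p^2})$-module, hence rank-$4$ over $\Z_p$) is a progenerator for $\calO\tensor\Z_p$ so that double-dualization against it is the identity. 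This is doable but is precisely the nontrivial work you admit you have not done, and it is the step the paper's proof is designed to avoid. As currently written, your proposal establishes fully-faithfulness, the rank/dimension counts, and the image-containment, but essential surjectivity is not proved. For parts \eqref{I:maximal is product} and \eqref{I:isomorphic to E^g}, your deduction from \eqref{I:equivalence of categories of maximal abelian varieties} via Theorem~\ref{T:modules over quaternion order}\eqref{I:direct sum of ideals over O} and \eqref{I:free over quaternion order} is fine once \eqref{I:equivalence of categories of maximal abelian varieties} is in place.
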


 We need a few lemmas for the proof of Theorem~\ref{T:category of maximal abelian varieties}.

\begin{lemma}
\label{L:maximal E over F_p}
There exists an elliptic curve $E$ over $\F_p$ 
such that $E_{\F_{p^2}}$ is maximal.
\end{lemma}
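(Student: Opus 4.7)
The plan is to produce an elliptic curve $E/\F_p$ whose trace of Frobenius $a_p$ equals $0$. Once this is done, the characteristic polynomial of $\pi_E$ is $x^2+p$, so $\pi_E^2 = -p$ in $\End E$; since the $p^2$-power Frobenius of $E_{\F_{p^2}}$ is the square of $\pi_E$, we obtain $\pi_{E_{\F_{p^2}}} = -p$, and Proposition~\ref{P:characterization of maximality}\eqref{I:pi_A=p} then gives that $E_{\F_{p^2}}$ is maximal.

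The main step is therefore the existence of such an $E$. I would appeal to Honda-Tate theory in the form made explicit for elliptic curves over prime fields by Waterhouse (\cite[Theorem~4.1]{Waterhouse1969}): the polynomial $x^2+p \in \Z[x]$ is a Weil $p$-polynomial, and Waterhouse's classification shows that it is realized by a nonempty (necessarily supersingular) isogeny class of elliptic curves over $\F_p$. Any representative of this class does the job.

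For an alternative, more explicit route that works in most residue classes of $p$: the CM curve $y^2 = x^3 - x$ has good reduction over $\F_p$ with $a_p = 0$ exactly when $p \equiv 3 \pmod 4$ (since $p$ is inert in $\Z[i]$, forcing the reduction to be supersingular), and $y^2 = x^3 + 1$ likewise gives $a_p = 0$ when $p \equiv 2 \pmod 3$ and $p \neq 2,3$; together these handle every $p \not\equiv 1 \pmod{12}$ (with the exceptional small primes handled by ad hoc inspection). The residue class $p \equiv 1 \pmod{12}$ has no equally clean CM construction, and this is the main obstacle in avoiding Waterhouse's classification entirely; for a uniform proof, the invocation of Honda-Tate/Waterhouse is therefore the cleanest route.
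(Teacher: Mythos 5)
Your proof is correct and takes essentially the same route as the paper: both produce $E/\F_p$ with $a_p = 0$ (equivalently, $\#E(\F_p)=p+1$) by citing Waterhouse's classification \cite[Theorem~4.1]{Waterhouse1969}, observe $\pi_E^2 = -p$, and invoke Proposition~\ref{P:characterization of maximality}\eqref{I:pi_A=p}. The alternative CM construction you sketch is a nice supplement but, as you note, does not cover $p \equiv 1 \pmod{12}$, so it cannot replace the Waterhouse citation on its own.
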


\begin{proof}
There exists an elliptic curve $E$ over $\F_p$ 
with $p+1$ points \cite[Theorem~4.1(5)(i)]{Waterhouse1969}.
The $p$-power Frobenius endomorphism $\pi_E$ of $E$ satisfies $\pi_E^2 = -p$,
so $E_{\F_{p^2}}$ satisfies condition~\eqref{I:pi_A=p} in
Proposition~\ref{P:characterization of maximality}.
\end{proof}

\begin{lemma}
\label{L:separable isogeny}
If $E$ and $E'$ are maximal elliptic curves over $\F_{p^2}$,
there exists a separable isogeny $E \rightarrow E'$.
\end{lemma}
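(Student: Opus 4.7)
The plan is to construct $\phi\in\Hom(E,E')$ of degree coprime to $p$; such a $\phi$ is automatically separable. By Lemma~\ref{L:homomorphisms defined over Fp^2}, any homomorphism defined over $\Fbar_p$ descends to $\F_{p^2}$ and separability is preserved by base change, so it suffices to work over $\Fbar_p$ and then descend.

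The key input is the $p$-adic structure of $\Hom(E,E')$. Both $E[p^\infty]$ and $E'[p^\infty]$ are one-dimensional supersingular $p$-divisible groups of height~$2$ over $\F_{p^2}$ with $p^2$-Frobenius equal to $-p$. A direct Dieudonn\'e-module computation over $W \colonequals W(\F_{p^2})$ shows that any such $(M,F)$ admits a basis $e_1, e_2=F(e_1)$ with $F(e_2) = -p\,e_1$ (use $F^2=-p$ to pick $e_1$ whose image in $M/pM$ avoids $F(M/pM)$, then apply Nakayama), so the $p$-divisible group is unique up to isomorphism over $\F_{p^2}$, and $E[p^\infty]\isom E'[p^\infty]$. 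Tate's theorem on $p$-divisible groups of abelian varieties over finite fields then gives a natural isomorphism
\[
\Hom_{\F_{p^2}}(E,E')\otimes_{\Z}\Z_p \;\isom\; \Hom_{\F_{p^2}}\bigl(E[p^\infty],E'[p^\infty]\bigr),
\]
under which any isomorphism $f\colon E[p^\infty]\stackrel{\sim}\To E'[p^\infty]$ corresponds to an element $\tilde f\in \Hom(E,E')\otimes\Z_p$ whose reduced norm lies in $\Z_p^\times$, i.e., $v_p(\deg\tilde f)=0$.

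To finish, I would use the $p$-adic density of $\Hom(E,E')$ inside its completion $\Hom(E,E')\otimes\Z_p$. The degree function extends continuously to a $\Z_p$-valued quadratic form on the completion, so any $\phi\in\Hom(E,E')$ sufficiently $p$-adically close to $\tilde f$ also satisfies $v_p(\deg\phi)=0$; such a $\phi$ has $\deg\phi$ a positive integer coprime to $p$, and hence is the desired separable isogeny.

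The main obstacle is justifying Tate's theorem for $p$-divisible groups in the required form; this is more delicate than the classical $\ell$-adic Tate conjecture and is specific to abelian varieties over finite fields. A lighter-weight alternative that bypasses the $p$-adic Tate isomorphism and Dieudonn\'e theory altogether is to cite the well-known connectedness of the supersingular $\ell$-isogeny graph over $\Fbar_p$ for any prime $\ell\neq p$: composing a connecting path from $E_{\Fbar_p}$ to $E'_{\Fbar_p}$ yields a separable isogeny of $\ell$-power degree, which then descends by Lemma~\ref{L:homomorphisms defined over Fp^2}.
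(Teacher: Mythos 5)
Your proof is correct, but it proceeds along a genuinely different route from the paper, and it is worth noting the tradeoffs. The paper first observes that prime-to-$p$ isogeny is an equivalence relation and that, for a maximal $E$, an isogeny is separable exactly when its degree is prime to $p$. It then produces an ``anchor'' curve $E_0$ over $\F_p$ whose base change is maximal (Lemma~\ref{L:maximal E over F_p}) and exploits the crucial fact that, because $E_0$ is defined over $\F_p$, its $p$-power Frobenius is an \emph{endomorphism} $E_0\to E_0$. Factoring any isogeny $E_0\to E$ as a power of that Frobenius followed by a separable map then shows $E_0$ is prime-to-$p$ isogenous to $E$, and likewise to $E'$, so $E\sim E'$. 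This is completely elementary, requiring only the basic inseparable/separable factorization and Tate's $\ell$-adic isogeny theorem to produce the initial isogeny. Your main argument instead invokes the classification of supersingular Dieudonn\'e modules over $W(\F_{p^2})$ with $F^2=-p$ and the $p$-adic version of Tate's theorem for $p$-divisible groups of abelian varieties over finite fields (Tate, Waterhouse--Milne), followed by a $p$-adic approximation via continuity of the degree form. Each step is sound: the Dieudonn\'e computation does pin down $E[p^\infty]$ up to isomorphism, the $p$-adic Tate theorem is a true (if substantial) theorem, and density of $\Hom(E,E')$ in $\Hom(E,E')\otimes\Z_p$ plus continuity of $\deg$ correctly yields a prime-to-$p$ isogeny. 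The cost is that you are using crystalline/Dieudonn\'e machinery where the paper uses none. Your ``lighter-weight alternative'' at the end --- connectivity of the supersingular $\ell$-isogeny graph, followed by descent via Lemma~\ref{L:homomorphisms defined over Fp^2} --- is precisely the argument of Serre that the paper records in the Remark immediately following this lemma, so the paper views it as a strengthening rather than a shortcut.
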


\begin{proof}
For elliptic curves $E$ and $E'$,
write $E \sim E'$ if there exists an isogeny $E \rightarrow E'$
of degree prime to $p$.
The relation $\sim$ is an equivalence relation:
reflexive because of the identity, symmetric because of the dual isogeny
(which has the same degree),
and transitive because of composition of isogenies.

Any isogeny $\phi \colon E \rightarrow E'$ factors as $f \circ \lambda$
where $\deg f = p^n$ for some $n \ge 1$,
and $p \nmid \deg \lambda$.
Here $\lambda$ is separable.
On the other hand, $f$ is a factor of $[p^n]$,
which is purely inseparable if $E$ is maximal.
Thus, assuming that $E$ is maximal, $\phi$ is separable if and only if
$p \nmid \deg \phi$.

Let $E_0$ be the maximal elliptic curve over $\F_{p^2}$
in Lemma~\ref{L:maximal E over F_p}.
Since $\#E_0(\F_{p^2})=\#E(\F_{p^2})$, 
there exists an isogeny $E_0 \rightarrow E$,
which factors as 
$E_0 \stackrel{f}\rightarrow E_0 \stackrel{\lambda}\rightarrow E$,
where $f$ is a power of the $p$-power Frobenius morphism
(which goes from $E_0$ to itself since $E_0$ is definable over $\F_p$),
and $\lambda$ is separable.
By the previous paragraph, $p \nmid \deg \lambda$.
Thus $E_0 \sim E$.
Similarly, $E_0 \sim E'$, so $E \sim E'$.
Thus there exists an isogeny $E \rightarrow E'$ of degree prime to $p$.
Any such isogeny is separable.
\end{proof}

\begin{remark}
Even better, if $E$ and $E'$ are maximal elliptic curves over $\F_{p^2}$,
there exists an isogeny of $\ell$-power degree for any prime $\ell \ne p$:
for an argument due to Serre, see \cite[p.~223]{Mestre1986}.
\end{remark}

\begin{lemma}
\label{L:defined over F_{p^2}}
If $A$ is a maximal abelian variety over $\F_{p^2}$,
then every finite \'etale subgroup scheme of $A_{\Fbar_p}$ is defined over $\F_{p^2}$.
\end{lemma}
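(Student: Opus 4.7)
The plan is to translate the statement about finite étale subgroup schemes into a statement about finite Galois-stable subgroups of $A(\Fbar_p)$, and then observe that the Galois action is realized by an endomorphism that preserves every subgroup.

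More precisely, since $\Fbar_p$ is perfect, a finite étale subgroup scheme $H$ of $A_{\Fbar_p}$ is determined by the finite subgroup $H(\Fbar_p) \subseteq A(\Fbar_p)$, and by Galois descent $H$ is defined over $\F_{p^2}$ if and only if $H(\Fbar_p)$ is stable under $\Galois_{\F_{p^2}} = \Gal(\Fbar_p/\F_{p^2})$. This Galois group is topologically generated by the $p^2$-power Frobenius, whose action on $A(\Fbar_p)$ coincides with that of the Frobenius endomorphism $\pi_A \in \End A$.

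By Proposition~\ref{P:characterization of maximality}\eqref{I:pi_A=p}, the maximality of $A$ forces $\pi_A = -p$ as an endomorphism of $A$. Therefore the Galois action on $A(\Fbar_p)$ is simply multiplication by $-p$, which carries every subgroup into itself. In particular $H(\Fbar_p)$ is Galois-stable, so $H$ descends to $\F_{p^2}$.

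There is essentially no obstacle here; the only thing to be careful about is distinguishing ``defined over $\F_{p^2}$ as a subgroup scheme of $A$'' (what we want) from ``defined over $\F_{p^2}$ as an abstract group scheme'' (which is automatic for étale group schemes over $\Fbar_p$). The Galois-descent reduction handles this cleanly, and the key input from the earlier results is precisely the identity $\pi_A = -p$.
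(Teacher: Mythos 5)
Your proof is correct and is essentially the same as the paper's: both identify the $p^2$-power Frobenius field automorphism with the endomorphism $\pi_A = -p$ and note that multiplication by $-p$ preserves every subgroup of $A(\Fbar_p)$. The paper phrases this in terms of prime-to-$p$ torsion (which is automatically all the \'etale torsion since $A$ is supersingular), but the mechanism is identical.
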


\begin{proof}
The $p^2$-power Frobenius field automorphism acts on (prime-to-$p$) 
torsion points of $A_{\Fbar_p}$ as $-p$, 
so it preserves any finite subgroup of order prime to~$p$.
\end{proof}

\begin{lemma}
\label{L:supersingular isogeny factorization}
 Let $A$ be a supersingular abelian variety over a field $k$ of characteristic~$p$.
Every $p$-power order subgroup scheme $G \subseteq A$
is an iterated extension of copies of $\boldalpha_p$.
\end{lemma}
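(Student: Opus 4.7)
The plan is to reduce the problem to the Jordan--H\"older classification of simple objects in the abelian category of finite commutative group schemes over $k$. In outline, I will (i) embed $G$ into $A[p^n]$ for some $n$, (ii) show $A[p^n]$ is \emph{local-local} (both it and its Cartier dual are infinitesimal), (iii) deduce that $G$, and every subquotient of $G$, is local-local, and (iv) identify the only simple local-local finite commutative group scheme of $p$-power order as $\boldalpha_p$.

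Step (i) is immediate since $\#G$ is a power of $p$, so $G$ is annihilated by some $p^n$. For step (ii), the hypothesis that $A$ is supersingular means $A_{\kbar}$ is isogenous to $E^g$ for some supersingular elliptic curve $E/\kbar$ with $g=\dim A$; the $p$-divisible group $E[p^\infty]$ has a single slope $1/2$, and so is in particular local with local Cartier dual, and this property is preserved under isogeny. Hence $A[p^\infty]_{\kbar}$, and therefore $A[p^\infty]$ over $k$, is local-local. In particular $A[p^n]$ is infinitesimal, and its Cartier dual $A[p^n]^D \isom A^\vee[p^n]$ is also local since $A^\vee$ is again supersingular. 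For step (iii), a closed subscheme of an infinitesimal scheme is infinitesimal, so $G$ is local; and $G^D$ is a quotient of the local scheme $A[p^n]^D$, so $G^D$ is local as well. Sub- and quotient-objects of local-local finite group schemes are again local-local, so every subquotient of $G$ has this property.

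For step (iv), Jordan--H\"older applied to $G$ yields a filtration $0 = G_0 \subsetneq \cdots \subsetneq G_r = G$ with simple subquotients $G_i/G_{i-1}$ of $p$-power order, each local-local by (iii). Let $H$ be any such simple piece. The kernel $H[F]$ of the relative Frobenius $F_H \colon H \to H^{(p)}$ is a subgroup scheme of $H$ defined over $k$, and is nonzero for a nonzero local $H$ (otherwise $F_H$ would be an isomorphism of finite $k$-schemes, forcing $H$ to be \'etale). Simplicity of $H$ thus forces $H[F] = H$, i.e.\ $F_H = 0$; the dual argument applied to the (also simple local-local) Cartier dual $H^D$ gives $V_H = 0$ on $H$. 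Under the correspondence between height-$1$ commutative group schemes over $k$ and restricted Lie algebras over $k$, the vanishing $F_H = 0$ exhibits $H$ as such a Lie algebra, and the further vanishing $V_H = 0$ forces both the bracket and the $p$-operation to be trivial. Any such $H$ is isomorphic to $\boldalpha_p^m$ for $m = \dim_k \Lie H$, and simplicity forces $m=1$, so $H \isom \boldalpha_p$.

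The main obstacle lies in step (iv), the classification of simple local-local finite commutative group schemes of $p$-power order. Everything else is essentially bookkeeping; the crux is invoking the Dieudonn\'e--Cartier description of height-$1$ commutative group schemes as restricted Lie algebras, which is valid over any base field and allows us to avoid worrying about whether $k$ is perfect.
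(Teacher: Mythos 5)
Your proposal is correct, but it takes a genuinely different route from the paper. The paper reduces immediately to $k$ algebraically closed (using the invariance of the $a$-number, $\dim_k \Hom(\boldalpha_p, G)$, under field extension, cited to Li--Oort), takes an isogeny $E^r \to A$ with $E$ a supersingular elliptic curve over $\kbar$, notes that $E[p]$ is an extension of $\boldalpha_p$ by $\boldalpha_p$ so that every Jordan--H\"older factor of $E[p^N]^r$ is $\boldalpha_p$, and then observes that for $N \gg n$ the image of $E[p^N]^r$ under the isogeny contains $A[p^n] \supseteq G$, so the Jordan--H\"older factors of $G$ are among those of $E[p^N]^r$. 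You instead argue purely on the level of finite group schemes over $k$: you show $A[p^n]$ is local-local (via the slope-$1/2$ characterization of supersingularity, using that both $A$ and $A^\vee$ are supersingular and that quotients of infinitesimals are infinitesimal), that the local-local property passes to all subquotients, and then you classify simple local-local finite commutative group schemes of $p$-power order directly over $k$ by showing $F=0$ and $V=0$ and invoking the Dieudonn\'e--Cartier correspondence between height-one commutative group schemes and restricted Lie algebras. Your approach is more structural, isolates the reusable fact that $\boldalpha_p$ is the unique simple local-local group scheme over any field, and postpones base change to $\kbar$ to the setup; but it also leans on heavier machinery ($p$-divisible group slopes, the restricted Lie algebra dictionary) where the paper's argument only needs the concrete composition series of $E[p]$ for a supersingular elliptic curve. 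Both proofs are correct; the one small thing you should make explicit is a citation for the fact that under the height-one/restricted-Lie-algebra equivalence, $V_H = 0$ corresponds to the triviality of the $p$-operation (this is standard -- e.g.\ Demazure--Gabriel or SGA3 Exp.~VIIA -- but it carries the weight of your step~(iv)).
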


\begin{proof}
 By induction, it suffices to show that if $G \ne 0$,
then $G$ contains a copy of $\boldalpha_p$.
The \emph{$a$-number} $\dim_k \Hom(\boldalpha_p,G)$ is unchanged by field 
extension \cite[Section~1.5]{Li-Oort1998}, 
so we may assume that $k$ is algebraically closed.
Then $A$ is isogenous to a power $E^r$ of a supersingular elliptic curve.
The group scheme $E[p]$ is an extension of $\boldalpha_p$ by $\boldalpha_p$,
so all Jordan--H\"older factors of $E[p^n]$ are isomorphic to $\boldalpha_p$.
The image of $E[p^N]$ under the isogeny $E^r \rightarrow A$ 
contains $A[p^n]$ if $N$ is sufficiently large relative $n$,
and $A[p^n]$ contains $G$ if $n$ is large enough.
Thus all Jordan--H\"older factors of $G$ are isomorphic to $\boldalpha_p$.
\end{proof}

\begin{lemma}
\label{L:action on alpha_p}
 Let $E$ and $E'$ be maximal elliptic curves over $\F_{p^2}$.
Identify $\boldalpha_p$ with a subgroup scheme of each.
Then each homomorphism $E \rightarrow E'$ restricts to a homomorphism $\boldalpha_p \rightarrow \boldalpha_p$
and the resulting map
\begin{equation}
\label{E:Hom to End alpha_p}
	\Hom(E,E') \rightarrow \End \boldalpha_p \isom \F_{p^2}
\end{equation}
is surjective.
\end{lemma}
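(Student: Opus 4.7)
The plan is to identify the restriction map with the differential on Lie algebras, reduce from $E \to E'$ to the case $E = E'$ via a separable isogeny, and then use the structure of $\End E$ as a maximal order in the local quaternion division algebra $B_p$.

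First, by functoriality of the $p$-power Frobenius, every homomorphism $f \colon E \to E'$ satisfies $f^{(p)} \circ F_E = F_{E'} \circ f$, so $f$ carries $\ker F_E = \boldalpha_p$ into $\ker F_{E'} = \boldalpha_p$, giving the promised restriction. An $\F_{p^2}$-endomorphism of $\boldalpha_p \isom \Spec \F_{p^2}[T]/(T^p)$ must be of the form $T \mapsto aT$ with $a \in \F_{p^2}$, and so is determined by its differential at the origin. Using the canonical identifications $\Lie \boldalpha_p = \Lie E$ and $\Lie \boldalpha_p = \Lie E'$ of one-dimensional $\F_{p^2}$-vector spaces, the restriction map $\Hom(E,E') \to \End \boldalpha_p \isom \F_{p^2}$ coincides with the differential map $f \mapsto df$ landing in $\Hom_{\F_{p^2}}(\Lie E, \Lie E') \isom \F_{p^2}$.

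By Lemma~\ref{L:separable isogeny}, I would then choose a separable isogeny $\mu \colon E \to E'$; since separability means nonzero differential, $d\mu$ is a unit in $\F_{p^2}$. For $g \in \End E$, the differential of $\mu \circ g$ equals $d\mu \cdot dg$, so the image of $\End E \to \Hom(E,E') \to \F_{p^2}$ equals $d\mu$ times the image of the differential map $\End E \to \End \Lie E \isom \F_{p^2}$. Because $d\mu \in \F_{p^2}^\times$, surjectivity of~\eqref{E:Hom to End alpha_p} follows once one proves surjectivity of $\End E \to \End \Lie E$.

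For the last step, let $\calO \colonequals \End E$, a maximal order in the quaternion algebra $B \colonequals \calO \tensor \Q$ ramified at $p$ and $\infty$. The localization $\calO_p \colonequals \calO \tensor_\Z \Z_p$ is therefore the unique maximal order in $B \tensor \Q_p$, hence a non-commutative DVR whose unique maximal two-sided ideal $\mathfrak{P}$ satisfies $\mathfrak{P}^2 = p\calO_p$ and $\calO_p/\mathfrak{P} \isom \F_{p^2}$. The map $\calO \to \End \Lie E \isom \F_{p^2}$ is a ring homomorphism sending $1$ to $1$, with kernel containing $p$ because $[p]$ is inseparable on the supersingular curve $E$. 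So the kernel corresponds to a proper two-sided ideal of $\calO_p$ containing $p\calO_p = \mathfrak{P}^2$, and the only candidates are $\mathfrak{P}$ and $p\calO_p$. The latter is excluded because $\calO/p\calO$ has $\F_p$-dimension $4$ and cannot inject into the $2$-dimensional $\F_{p^2}$. Hence the kernel is $\mathfrak{P} \cap \calO$, and the induced injection $\calO/(\mathfrak{P}\cap\calO) \hookrightarrow \F_{p^2}$ has source already of order $p^2$, forcing surjectivity.

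The main obstacle is this last step — specifically, pinning down the kernel of $\End E \to \End \Lie E$ as the maximal two-sided ideal $\mathfrak{P}$ rather than the larger $p\calO_p$. This relies on the standard but nontrivial structure of the unique maximal order in a $p$-adic quaternion division algebra as a non-commutative DVR with residue field $\F_{p^2}$; the rest of the argument is essentially formal.
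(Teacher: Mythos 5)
Your proof is correct and follows essentially the same route as the paper's: restriction follows from functoriality of $p$-power Frobenius, the general case reduces to $E=E'$ via the separable isogeny from Lemma~\ref{L:separable isogeny}, and the case $E=E'$ comes down to surjectivity of a ring homomorphism $\calO \to \F_{p^2}$. The only difference is that the paper asserts ``every ring homomorphism from $\calO$ to $\F_{p^2}$ is surjective'' without elaboration, while you supply the justification via the structure of $\calO_p$ as a noncommutative DVR with residue field $\F_{p^2}$; your added detour through $\Lie$ is also just a restatement of the identification $\End\boldalpha_p \isom k$ used implicitly in the paper.
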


\begin{proof}
 Since each $\boldalpha_p$ is the kernel of the $p$-power Frobenius morphism,
any homomorphism $E \rightarrow E'$ must map $\boldalpha_p$ to $\boldalpha_p$.
If $E'=E$, then the resulting ring homomorphism 
\[
	\End E \rightarrow \End \boldalpha_p \isom \F_{p^2}
\]
is surjective because every ring homomorphism from $\calO$ to $\F_{p^2}$
is surjective.
In the general case, 
Lemma~\ref{L:separable isogeny}
provides a separable isogeny $\lambda \colon E \rightarrow E'$;
then $\lambda|_{\boldalpha_p} \ne 0$, 
so $\{ \lambda \circ e : e \in \End E \}$ surjects onto $\End \boldalpha_p$.
\end{proof}

\begin{lemma}
\label{L:transitive}
 Let $B$ be a product of $g$ maximal elliptic curves over $\F_{p^2}$.
Then $\Aut B$ acts transitively on the set of subgroup schemes of $B$
isomorphic to $\boldalpha_p$.
Also, if $\ell$ is a prime not equal to $p$,
then $\Aut B$ acts transitively on the set of subgroup schemes of $B$
of order $\ell$.
\end{lemma}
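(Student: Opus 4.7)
The plan is to parametrize each set of subgroup schemes as a projective space over a finite field on which $\End B$ acts through a matrix ring, and then to lift enough ``elementary'' transvections to $\Aut B$ to realize $\mathrm{SL}$, which is known to act transitively on projective space.

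For the $\boldsymbol{\alpha}_p$ part, each factor $E_i$ contains a unique copy of $\boldsymbol{\alpha}_p$, namely $\ker(\pi_{E_i,p})$, so every homomorphism $\boldsymbol{\alpha}_p \to E_i$ factors through this copy. Hence an embedding $\boldsymbol{\alpha}_p \hookrightarrow B$ is specified by a nonzero tuple in $\End(\boldsymbol{\alpha}_p)^g = \F_{p^2}^g$, whose image depends only on its class in $\mathbb{P}^{g-1}(\F_{p^2})$. The action of $\End B$ preserves $\boldsymbol{\alpha}_p^g \subseteq B$ and factors through a ring map $\End B \to \M_g(\F_{p^2})$, surjective by Lemma~\ref{L:action on alpha_p}. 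For $g \ge 2$, given distinct $i,j$ and $a \in \F_{p^2}$, lift $a$ to $\tilde a \in \Hom(E_j, E_i)$ using Lemma~\ref{L:action on alpha_p} and form $I + E_{ij}(\tilde a) \in \End B$; the off-diagonal position forces $E_{ij}(\tilde a)^2 = 0$, so this element is invertible with inverse $I - E_{ij}(\tilde a)$, hence lies in $\Aut B$, and maps to the transvection $T_{ij}(a) \in \GL_g(\F_{p^2})$. Such transvections generate $\mathrm{SL}_g(\F_{p^2})$, which acts transitively on $\mathbb{P}^{g-1}(\F_{p^2})$. (The case $g=1$ is trivial because there is only one such subgroup scheme.)

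For subgroup schemes of order $\ell \ne p$, the equality $\pi_B = -p$ from Proposition~\ref{P:characterization of maximality} shows that $\pi_B$ acts on $B[\ell] \isom \F_\ell^{2g}$ as the scalar $-p \in \F_\ell^\times$, so every $\F_\ell$-line is Galois-stable and order-$\ell$ subgroup schemes of $B$ correspond bijectively to lines in $B[\ell]$, i.e.\ to $\mathbb{P}^{2g-1}(\F_\ell)$. By Tate's isogeny theorem for abelian varieties over finite fields, each map $\Hom(E_j, E_i) \otimes \Z_\ell \to \Hom(T_\ell E_j, T_\ell E_i)$ is an isomorphism; reducing mod $\ell$ yields a surjection $\Hom(E_j, E_i) \twoheadrightarrow \M_2(\F_\ell)$, so $\End B \to \End B[\ell] = \M_{2g}(\F_\ell)$ is surjective. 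For $g \ge 2$ and distinct block indices $I \ne J$ in $\{1, \dots, g\}$ and any $M \in \M_2(\F_\ell)$, lift $M$ to $\tilde M \in \Hom(E_J, E_I)$ and let $\tilde X \in \End B$ be the block matrix with $\tilde M$ in the $(I,J)$-block and zeros elsewhere; since $I \ne J$, $\tilde X^2 = 0$, so $I + \tilde X \in \Aut B$. The images in $\GL_{2g}(\F_\ell)$ are precisely the block-off-diagonal unipotents, and the commutator identity $[I + E_{ij}(a), I + E_{jk}(b)] = I + E_{ik}(ab)$ (valid for distinct $i,j,k$) expresses every within-block transvection as a commutator of two across-block ones whenever $g \ge 2$; hence these images generate $\mathrm{SL}_{2g}(\F_\ell)$, which acts transitively on $\mathbb{P}^{2g-1}(\F_\ell)$.

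The main obstacle is that a naive lift of an element of $\GL_g(\F_{p^2})$ or $\GL_{2g}(\F_\ell)$ to $\End B$ need not be invertible: the unit groups $(\End E_i)^\times$ are typically just $\{\pm 1\}$, so diagonal units are essentially unavailable and one cannot lift a general invertible matrix directly. The key observation that bypasses this is that one only ever needs to lift block-off-diagonal unipotent matrices; such lifts are automatically nilpotent perturbations of the identity (the off-diagonal block structure forces the square to vanish) and hence units in $\End B$, yet standard commutator identities show they already generate $\mathrm{SL}$, which suffices for transitivity on projective space.
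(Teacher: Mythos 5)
Your proof is correct, and it takes the same overall route as the paper (reduce to a transitive action on a projective space via the map $\End B \to \End B[\ell]$ or $\End B \to \End(\ker F_p)$), but it is \emph{more careful on the crucial step}. The paper deduces, from the surjectivity of the ring maps $\End B \to \M_{2g}(\F_\ell)$ and $\End B \to \M_g(\F_{p^2})$, that the unit-group maps $\Aut B \to \GL_{2g}(\F_\ell)$ and $\Aut B \to \GL_g(\F_{p^2})$ are surjective (it writes ``$\Aut B \isom \GL_{2g}(\Z_\ell)$,'' an abuse standing for $(\End B \otimes \Z_\ell)^\times \isom \GL_{2g}(\Z_\ell)$). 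That inference does not hold for an order in general: surjectivity of a ring map onto $\M_n(\F_\ell)$ does not give surjectivity of unit groups, and indeed for $g=1$ the map $\Aut E = \calO^\times \to \GL_2(\F_\ell)$ has finite image $\calO^\times$ and is certainly not surjective for $\ell$ large. A proper justification of the paper's claim would require strong approximation for $\SL_n$ of the quaternion algebra (valid only for $g \ge 2$), which the paper does not invoke. Your transvection-lifting observation cleanly bypasses this: you never need to lift a general unit, only elements of the form $1 + x$ with $x$ squaring to zero in $\End B$ because it sits in an off-diagonal block, and these are automatically invertible; the commutator identity then produces the within-block transvections and hence $\SL$, which is all that transitivity on projective space needs. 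So your approach buys a self-contained, elementary argument where the paper's is incomplete or relies tacitly on an uncited approximation theorem.

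One cautionary remark, applicable to both your proof and the paper's: the $\ell$-part of the Lemma is in fact \emph{false as stated when $g = 1$} (for $\ell$ large, $\Aut E$ is too small to act transitively on the $\ell+1$ points of $\PP^1(\F_\ell)$); your argument silently requires $g \ge 2$ there, and the paper's alleged surjectivity is simply wrong in that case. This does no harm in the application (Corollary~\ref{C:quotient is still product} only needs the Lemma to move a subgroup into one of the factors, which is vacuous when $g=1$), but it is worth flagging that the stated Lemma should be read with an implicit $g \ge 2$ for the order-$\ell$ assertion. Your handling of the $\boldalpha_p$ case when $g = 1$ (one subgroup, trivially transitive) is exactly right.
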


\begin{proof}
 For $\ell \ne p$,
Tate's theorem on homomorphisms~\cite{Tate1966} shows that
\[
	\End B \rightarrow \End(T_\ell B) \isom M_{2g}(\Z_\ell)
\]
is an isomorphism.
In particular, it surjects onto $\End B[\ell] \isom M_{2g}(\F_\ell)$,
and $\Aut B \isom \GL_{2g}(\Z_\ell)$ surjects on $\Aut B[\ell] \isom \GL_{2g}(\F_\ell)$.
For any finite-dimensional vector space $V$, the group $\GL(V)$ acts
transitively on the lines in $V$, so $\Aut B$ acts transitively on the order $\ell$
subgroup schemes of $B$.

 If $F_p \colon B \rightarrow B^{(p)}$ is the $p$-power Frobenius morphism,
then $\ker F_p \isom \boldalpha_p^g$.
Lemma~\ref{L:action on alpha_p} implies that the ring homomorphism
\[
	\End B \rightarrow \End(\ker F_p) = \End(\boldalpha_p^g) = M_g(\F_{p^2})
\]
is surjective, 
so $\Aut B \rightarrow \GL_g(\F_{p^2})$ is surjective.
The latter group acts transitively on the copies of $\boldalpha_p$ in $\boldalpha_p^g$
over $\F_{p^2}$.
\end{proof}

\begin{corollary}
\label{C:quotient is still product}
 Let $B$ be a product $E_1 \times \cdots \times E_g$ of maximal elliptic curves over $\F_{p^2}$.
Let $H$ be a subgroup scheme of $B$ such that $H \isom \boldalpha_p$
or $\#H$ is a prime $\ell \ne p$.
Then $B/H$ is a product of maximal elliptic curves over $\F_{p^2}$.
\end{corollary}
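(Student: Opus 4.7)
The plan is to combine the transitivity from Lemma~\ref{L:transitive} with an explicit choice of $H$ sitting inside a single elliptic-curve factor of $B$. An automorphism $\phi \in \Aut B$ carrying $H$ to $H'$ induces an isomorphism $B/H \isom B/H'$, so it suffices to verify the conclusion for \emph{one} particular subgroup scheme of each prescribed type. I will take that one to lie in the first factor $E_1$, embedded as $H_0 \times 0 \times \cdots \times 0 \subseteq E_1 \times E_2 \times \cdots \times E_g = B$, so that automatically
\[
	B/(H_0 \times 0 \times \cdots \times 0) \;\isom\; (E_1/H_0) \times E_2 \times \cdots \times E_g .
\]
The task then reduces to checking that $E_1/H_0$ is a maximal elliptic curve over $\F_{p^2}$.

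In the case $\#H = \ell$ with $\ell \ne p$, I would pick $H_0 \subseteq E_1[\ell]$ of order~$\ell$. Such an $H_0$ exists and is a finite étale subgroup scheme, so by Lemma~\ref{L:defined over F_{p^2}} it is automatically defined over $\F_{p^2}$. The quotient map $E_1 \To E_1/H_0$ is then an isogeny of elliptic curves over $\F_{p^2}$, so $E_1/H_0$ is isogenous to $E_1$ over $\F_{p^2}$; by Tate's theorem it shares the characteristic polynomial $(x+p)^2$ of $\pi_{E_1}$, and Proposition~\ref{P:characterization of maximality} then gives that $E_1/H_0$ is maximal.

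In the case $H \isom \boldalpha_p$, I would take $H_0$ to be the kernel of the $p$-power Frobenius $E_1 \To E_1^{(p)}$, which is a copy of $\boldalpha_p$ inside $E_1$ and which is defined over $\F_{p^2}$ because $E_1$ is. Since $E_1$ is over $\F_{p^2}$, so is $E_1^{(p)} \isom E_1/H_0$, and Frobenius is an $\F_{p^2}$-isogeny $E_1 \To E_1^{(p)}$; the same characteristic-polynomial argument as above shows that $E_1^{(p)}$ is a maximal elliptic curve over $\F_{p^2}$.

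There is no serious obstacle: Lemma~\ref{L:transitive} does the real work, and the only point to articulate is that the $H_0 \subset E_1 \subset B$ chosen in each case is, as a subgroup scheme of $B$, of the type covered by that lemma (a copy of $\boldalpha_p$, respectively a subgroup scheme of order~$\ell$). Both are clear from the construction.
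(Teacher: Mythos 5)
Your proposal is correct and follows essentially the same route as the paper's own proof: apply Lemma~\ref{L:transitive} to reduce to an $H_0$ living in the first factor $E_1$, observe that $B/H_0 \isom (E_1/H_0)\times E_2\times\cdots\times E_g$, and conclude that $E_1/H_0$ is maximal. You simply spell out the last step (isogeny invariance of the characteristic polynomial via Tate's theorem, together with Proposition~\ref{P:characterization of maximality}) which the paper leaves implicit.
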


\begin{proof}
 By Lemma~\ref{L:transitive}, we may assume that $H$ is contained is 
the copy of $\boldalpha_p$ in $E_1$, or a cyclic subgroup of order $\ell$ contained in $E_1$.
Then $E_1/H$ is another maximal elliptic curve over $\F_{p^2}$,
and $B/H \isom (E_1/H) \times E_2 \times \cdots \times E_g$.
\end{proof}

\begin{proof}[Proof of Theorem~\ref{T:category of maximal abelian varieties}]
\hfill
\begin{enumerate}[\upshape (a)]
\item 

Among all isogenies from a product of maximal elliptic curves to $A$,
let $\phi \colon B \rightarrow A$ be one of minimal degree
(at least one such $\phi$ exists, 
by Proposition~\ref{P:characterization of maximality}\eqref{I:A is maximal}$\Rightarrow$\eqref{I:A and E^g}).
Let $G$ be the connected component of $\ker \phi$.

 Suppose that $\phi$ is inseparable.
Then $G \ne 0$.
By Lemma~\ref{L:supersingular isogeny factorization},
$G$ contains a copy of $\boldalpha_p$.
By Corollary~\ref{C:quotient is still product},
$B/\boldalpha_p$ is again a product of maximal elliptic curves.
Now $\phi$ factors as $B \rightarrow B/\boldalpha_p \rightarrow A$,
and $B/\boldalpha_p \rightarrow A$ contradicts the minimality of $\phi$.

 Similarly, if $\phi$ is separable and $\deg \phi>1$,
then $\ker \phi$ contains a subgroup $H$ of order $\ell$,
defined over $\F_{p^2}$ by Lemma~\ref{L:defined over F_{p^2}};
Corollary~\ref{C:quotient is still product}
shows again that $B/H \rightarrow A$ contradicts the minimality of $\phi$.

 Hence $\phi$ is an isomorphism,
so $A$ is a product of maximal elliptic curves.
\item 
First let us justify the rewriting of the categories.
F.p.\ torsion-free left $\calO$-modules are projective by
Theorem~\ref{T:modules over quaternion order}\eqref{I:torsion-free equals projective over O}.
By Proposition~\ref{P:characterization of maximality}\eqref{I:A is maximal}$\Leftrightarrow$\eqref{I:A and E^g}, 
the abelian varieties isogenous to a power of $E$
are exactly the maximal abelian varieties over $\F_{p^2}$.

 By~\eqref{I:maximal is product}, 
every maximal abelian variety is a product of maximal elliptic curves,
each of which is $\HOM_{\calO}(I,E)$ for some left $\calO$-ideal $I$,
by the bottom of page~541 in \cite{Waterhouse1969}.
The result now follows from~\ref{T:maximal order equivalence}.
\item 
 Combine Theorem~\ref{T:modules over quaternion order}\eqref{I:free over quaternion order}
and part~\eqref{I:equivalence of categories of maximal abelian varieties}.
\end{enumerate}
The same proofs apply in the minimal case.
\end{proof}

\begin{remark}
 Because of Lemma~\ref{L:homomorphisms defined over Fp^2},
Theorem~\ref{T:category of maximal abelian varieties}\eqref{I:isomorphic to E^g} 
could be deduced also from its analogue over $\Fbar_p$,
that for $g \ge 2$, any product of $g$ supersingular elliptic curves over $\Fbar_p$
is isomorphic to any other.
The latter is a well-known theorem of Deligne, proved in a similar way:
see \cite[Theorem~6.2]{Ogus1979} and~\cite[Theorem~3.5]{Shioda1979}.
\end{remark}

\begin{remark}
 A related result can be found in \cite{Oort1975}: 
Theorem~2 there states that if $A$ is an abelian variety over an algebraically closed field
of characteristic~$p$, and the $a$-number of $A$ equals $\dim A$,
then $A$ is isomorphic to a product of supersingular elliptic curves.
\end{remark}

\section{Kernel subgroups}
\label{S:kernel subgroups}

\subsection{General properties of kernel subgroups}
\label{S:general properties of kernel subgroups}

\begin{definition}
 Let $A$ be an abelian variety over a field.
Call a subgroup scheme $G \subseteq A$ a \emph{kernel subgroup}
if $G = A[I]$ for some $I \subseteq \End A$.
$($These are called \emph{ideal subgroups} in {\rm \cite[p.~302]{Kani2011}}.$)$
\end{definition}

 In the definition, we may replace $I$ by the left $(\End A)$-ideal it generates
without changing $A[I]$.
Thus we may always assume that $I$ is a left \hbox{$(\End A)$-ideal}.

\begin{proposition}
\label{P:kernel subgroups}
\hfill
\begin{enumerate}[\upshape (a)]
\item\label{I:intersection of kernel subgroups}
 An intersection of kernel subgroups in $A$ is a kernel subgroup.
\item \label{I:product of kernel subgroups}
 Let $A_1,\ldots,A_n$ be abelian varieties.
Suppose that $G_i \subseteq A_i$ for $i=1,\ldots,n$.
Then $\prod_{i=1}^n G_i$ is a kernel subgroup of $\prod_{i=1}^n A_i$
if and only if each $G_i$ is a kernel subgroup of $A_i$.
\item \label{I:sum of kernel subgroups}
 Let $I_1,\ldots,I_n$ be pairwise coprime $2$-sided ideals of $\End A$.
Let $G_i \subseteq A[I_i]$ for $i=1,\ldots,n$.
Then $\sum_{i=1}^n G_i$ is a kernel subgroup
if and only if each $G_i$ is a kernel subgroup.
\end{enumerate}
\end{proposition}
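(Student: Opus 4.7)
The plan is to prove (a), (b), and (c) in order, using (a) as a tool for the others. Throughout, I may assume without loss of generality that the ideals defining kernel subgroups are left ideals of $\End A$, since passing to the left ideal generated by a subset does not change the associated kernel subgroup scheme.

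For (a), if $G_\alpha = A[I_\alpha]$, then I claim $\bigcap_\alpha G_\alpha = A\bigl[\sum_\alpha I_\alpha\bigr]$. This is a direct verification on points (or on the functor of points): a point $x$ lies in every $A[I_\alpha]$ iff every element of each $I_\alpha$ kills $x$, iff every element of $\sum_\alpha I_\alpha$ kills $x$.

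For (b), I shuttle ideals between $\End(\prod A_k)$ and the $\End A_i$ via the inclusions $\iota_i \colon A_i \hookrightarrow \prod A_k$ and the projections $p_i \colon \prod A_k \twoheadrightarrow A_i$. For $(\Leftarrow)$, if $G_i = A_i[J_i]$, then $p_i^{-1}(G_i) = (\prod A_k)[\iota_i J_i p_i]$, so $\prod G_i = \bigcap_i p_i^{-1}(G_i)$ is a kernel subgroup by part (a). For $(\Rightarrow)$, suppose $\prod G_i = (\prod A_k)[J]$ with $J$ a left ideal. Set $J_i \colonequals \{p_i\beta\iota_i : \beta \in J\} \subseteq \End A_i$; the containment $G_i \subseteq A_i[J_i]$ is immediate. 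The content of the reverse containment is that the off-diagonal components $p_j\beta\iota_i$ with $j\neq i$ also vanish on $A_i[J_i]$; this is forced because $J$ is a left ideal, so replacing $\beta$ by $\iota_i p_j \beta \in J$ and applying $p_i$ recovers $p_j\beta\iota_i$.

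For (c), the idea is to decompose $B \colonequals A[I_1 \cdots I_n] = A[\bigcap_k I_k]$ as an internal direct sum $A[I_1] \oplus \cdots \oplus A[I_n]$ inside $A$. Pairwise coprimality gives a CRT isomorphism $\End A/\bigcap_k I_k \isom \prod_i \End A/I_i$, and I lift the coordinate idempotents to $\tilde e_i \in \End A$ satisfying $\tilde e_i - 1 \in I_i$, $\tilde e_i \in I_j$ for $j \neq i$, and $1 - \sum_i \tilde e_i \in \bigcap_k I_k$. The key consequences are $I_i\tilde e_i \subseteq \bigcap_k I_k$ and $\tilde e_i|_{A[I_i]} = \id$, so $\tilde e_i$ restricts to a projection $B \twoheadrightarrow A[I_i]$. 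In particular the sum $\sum G_i$ is direct, and $G_i = (\sum_j G_j) \cap A[I_i]$. For $(\Rightarrow)$, the latter identity combined with (a) gives the result. For $(\Leftarrow)$, taking $G_i = A[J_i]$, I set $K$ to be the left ideal generated by $\bigcup_i J_i\tilde e_i$ together with $1 - \sum_i\tilde e_i$; a point $x$ satisfies $Kx = 0$ iff $x = \sum_i \tilde e_i x$ with each $\tilde e_i x \in G_i$, so $A[K] = \sum_i G_i$.

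The main obstacle will be the noncommutative bookkeeping in (c): lifting the CRT idempotents and verifying the three identities $\tilde e_i - 1 \in I_i$, $\tilde e_i \in I_j$ for $j \neq i$, and $1 - \sum \tilde e_i \in \bigcap_k I_k$ by hand from pairwise coprimality, and then checking that the resulting restrictions of the $\tilde e_i$ to $B$ really behave as orthogonal projections onto the $A[I_i]$. Everything else is routine functor-of-points manipulation on top of part (a).
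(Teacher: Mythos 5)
Your proposal is correct, and parts (a) and (b) follow essentially the same route as the paper: (a) is the identity $\bigcap A[I_\alpha] = A[\sum I_\alpha]$, and (b) passes ideals back and forth along the inclusions and projections. In (b), your explicit handling of the off-diagonal components $p_j\beta\iota_i$ via the left-ideal trick ($\iota_i p_j\beta \in J$) actually fills in a step that the paper's terse argument leaves implicit; the paper simply asserts that $G_i$ is the intersection of the kernels of the diagonal components $\bar f$ without proving the reverse containment. Part (c) is where you diverge genuinely: the paper reduces by induction to $n=2$, decomposes $A[I_1I_2]=A[I_1]\oplus A[I_2]$, and uses the ad hoc ideal $K = I_2J_1 + I_1J_2$ (with a minor index typo in the published version, but the idea is sound), whereas you handle general $n$ uniformly by lifting the CRT idempotents of $\End A/\bigcap_k I_k \isom \prod_i \End A/I_i$ to $\tilde e_i \in \End A$ and taking $K$ to be the left ideal generated by the $J_i\tilde e_i$ and $1-\sum_i\tilde e_i$. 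Your version avoids the induction and makes the direct-sum decomposition $A[\bigcap I_k]=\bigoplus A[I_i]$ and all the projection-verifications completely explicit, at the cost of having to establish the noncommutative CRT / idempotent lifting for pairwise coprime $2$-sided ideals -- which you correctly flag as the remaining bookkeeping and which is indeed routine (for each $i$, take $a_j+b_j=1$ with $a_j\in I_i$, $b_j\in I_j$, and set $\tilde e_i = \prod_{j\neq i} b_j$; two-sidedness of the $I_j$ ensures the product lies in each $I_j$ with $j\neq i$ and is $\equiv 1 \pmod{I_i}$). Both approaches are valid.
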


\begin{proof}
\hfill
\begin{enumerate}[\upshape (a)]
\item 
 We have $\Intersection A[I_i] = A[\sum I_i]$ for any left ideals $I_i$.
\item 
 Let $A=\prod A_i$ and $G=\prod G_i$.
Suppose that $G=A[I]$.
For each $f \in I$,
the composition $A_i \injects A \stackrel{f}\rightarrow A \surjects A_i$
defines $\bar{f} \in \End A_i$,
and $G_i$ is the intersection of the kernels of all such $\bar{f}$.

 Conversely, suppose that $G_i=A_i[I_i]$ for each $i$.
Let $I\colonequals \prod I_i$ denote the set of ``diagonal'' endomorphisms 
$(f_1,\ldots,f_n) \colon A \rightarrow A$ with $f_i \in I_i$.
Then $G=A[I]$.
\item 
 By induction, we may assume $n=2$.
Since $I_1$ and $I_2$ are coprime $2$-sided ideals,
$A[I_1 I_2] = A[I_1] \directsum A[I_2]$
and every subgroup scheme $H \subseteq A[I_1 I_2]$ decomposes as $H_1 \directsum H_2$
where $H_i \subseteq A[I_i]$;
namely $H_i = H \intersect A[I_i]$.

 If $G_1+G_2$ is a kernel subgroup, 
then so is $G_i = (G_1+G_2) \cap A[I_i]$,
by~\eqref{I:intersection of kernel subgroups}.

 Conversely, suppose that $G_i = A[J_i]$ for some left ideal $J_i$.
Replace $J_i$ by $J_i+I_i$ to assume that $J_i \supseteq I_i$.
Let $K\colonequals I_2 J_1 + I_1 J_2$
We claim that $A[K] = G_1 + G_2$.
First, $I_2 J_1 \subseteq J_1$, which kills $G_1$;
also, $I_1 J_2 \subseteq I_1$, which kills $G_1$.
Thus $K$ kills $G_1$.
Similarly, $K$ kills $G_2$.
Thus $G_1 + G_2 \subseteq A[K]$.
On the other hand, if we write $A[K] = H_1 \directsum H_2$
with $H_i \subseteq A[I_i]$, we will show that $H_i \subseteq G_i$,
so that $A[K] \subseteq G_1+G_2$.
Write $1=e_1+e_2$ with $e_i \in I_i$.
Then the subsets $e_1 J_1 \subseteq I_2 J_1 \subseteq K$ 
and $e_2 J_1 \subseteq I_2$ kill $H_1$,
so $J_1$ kills $H_1$;
i.e., $H_1 \subseteq A[J_1] = G_1$.
Similarly $H_2 \subseteq G_2$.
So $A[K] \subseteq G_1 + G_2$.
Hence $G_1 + G_2 = A[K]$, a kernel subgroup.
\qedhere
\end{enumerate}
\end{proof}

\subsection{Kernel subgroups of a power of an elliptic curve}
\label{S:kernel subgroups of a power}

\begin{proposition}
\label{P:kernel subgroups and quotients}
 Let $E$ be an elliptic curve over a field, and let $r \in \Z_{\ge 0}$.
Let $R \colonequals \End E$.
For a subgroup scheme $G \subseteq E^r$, the following are equivalent\textup{:}
\begin{enumerate}[\upshape (i)]
\item
$G$ is a kernel subgroup.
\item 
$G$ is the kernel of a homomorphism $E^r \rightarrow E^s$ for some $s \in \Z_{\ge 0}$.
\item 
There exists a f.p.\ torsion-free $R$-module $M$
such that $E^r/G \isom \HOM_R(M,E)$.
\item
There exists a submodule $M \subseteq R^r$ such that applying $\HOM_R(-,E)$ to
\[
	0 \rightarrow M \rightarrow R^r \rightarrow R^r/M \rightarrow 0
\]
yields 
\[
	0 \rightarrow G \rightarrow E^r \rightarrow E^r/G \rightarrow 0.
\]
\end{enumerate}
\end{proposition}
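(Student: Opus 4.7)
The plan is to establish the cycle (i) $\Leftrightarrow$ (ii) together with (ii) $\Rightarrow$ (iii) $\Rightarrow$ (iv) $\Rightarrow$ (ii). The equivalence (i) $\Leftrightarrow$ (ii) is essentially formal: because $R$ (being $\Z$, a quadratic order, or a maximal quaternionic order) is Noetherian, so is $\End E^r \cong M_r(R)$, hence any left ideal $I$ with $E^r[I] = G$ admits finitely many generators $f_1,\ldots,f_s$, giving $G = \bigcap_i \ker f_i = \ker\bigl((f_1,\ldots,f_s)\colon E^r \to E^{rs}\bigr)$. Conversely, given $(f_1,\ldots,f_s)\colon E^r \to E^s$ with kernel $G$, post-composing each $f_i \in \Hom(E^r,E)$ with $E \hookrightarrow E^r$ into a single fixed coordinate yields $\tilde f_i \in \End E^r$ with the same kernel, so the left ideal generated by the $\tilde f_i$ realises $G$ as a kernel subgroup.

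The implication (ii) $\Rightarrow$ (iii) is immediate from Theorem~\ref{T:functor for E}\eqref{I:image of E^r --> E^s}: since $R = \End E$, the map $f\colon E^r \to E^s$ of (ii) equals $\HOM_R(g,E)$ for some $g\colon R^s \to R^r$, and its image (which is $E^r/G$) is identified there with $\HOM_R(N,E)$ for some f.p.\ torsion-free $N \subseteq R^r$. For (iv) $\Rightarrow$ (ii), choose any surjection $R^s \twoheadrightarrow M$; left exactness of the contravariant functor $\HOM_R(-,E)$ (Section~\ref{S:functor to an abelian category}) turns it into an injection $E^r/G \cong \HOM_R(M,E) \hookrightarrow E^s$, and precomposing with $E^r \twoheadrightarrow E^r/G$ yields a homomorphism $E^r \to E^s$ with kernel $G$.

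The core step is (iii) $\Rightarrow$ (iv), where one must promote the abstract isomorphism $E^r/G \cong \HOM_R(M,E)$ to an honest embedding $M \hookrightarrow R^r$ under which the natural quotient sequence for $G$ is recovered by $\HOM_R(-,E)$. Because $E^r = \HOM_R(R^r,E)$, the composite surjection $q\colon E^r \twoheadrightarrow E^r/G \cong \HOM_R(M,E)$ is a morphism between objects in the image of $\HOM_R(-,E)$, so by full faithfulness (Theorem~\ref{T:maximal order equivalence}\eqref{I:fully faithful}) there is a unique $\psi\colon M \to R^r$ with $q = \HOM_R(\psi,E)$. To see that $\psi$ is injective, factor it as $M \twoheadrightarrow M/\ker\psi \hookrightarrow R^r$; since all three modules are torsion-free, Theorem~\ref{T:functor for E}\eqref{I:Hom is exact} yields the factorization
\[
E^r \twoheadrightarrow \HOM_R(M/\ker\psi,E) \hookrightarrow \HOM_R(M,E)
\]
of $q$, whereupon surjectivity of $q$ forces the second map to be an isomorphism, and full faithfulness lifts this to the statement $\ker\psi = 0$. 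Once $M \hookrightarrow R^r$, applying the exact functor $\HOM_R(-,E)$ to $0 \to M \to R^r \to R^r/M \to 0$ produces the required short exact sequence $0 \to G \to E^r \to E^r/G \to 0$. The main obstacle is exactly this step, which requires carefully coordinating full faithfulness (invoked twice) with the exactness properties of $\HOM_R(-,E)$.
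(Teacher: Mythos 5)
Your proposal is correct and follows essentially the same route as the paper: the chain (i)$\Leftrightarrow$(ii), then (ii)$\Rightarrow$(iii) via Theorem~\ref{T:functor for E}\eqref{I:image of E^r --> E^s}, then (iii)$\Rightarrow$(iv) via full faithfulness of $\HOM_R(-,E)$, and (iv)$\Rightarrow$(ii) via left exactness applied to a surjection $R^s\twoheadrightarrow M$. You also helpfully make explicit two points the paper leaves implicit: that $\End E^r$ is Noetherian (so a realizing ideal may be taken finitely generated), and that the morphism $\psi\colon M\to R^r$ produced by full faithfulness is in fact injective (which you check by factoring through the coimage and using exactness plus the fact that fully faithful functors reflect isomorphisms). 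Your packaging of (ii)$\Rightarrow$(i) by postcomposing each $f_i\colon E^r\to E$ with a coordinate inclusion $E\hookrightarrow E^r$ is a minor variant of the paper's trick of enlarging $s$ to be a multiple of $r$; both yield the same conclusion.
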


\begin{proof}\hfill

 (i)$\Rightarrow$(ii): 
Suppose that $G$ is a kernel subgroup, say $A[I]$.
Let $f_1,\ldots,f_n$ be generators for $I$.
Then $G$ is the kernel of $\xymatrix{ E^r \ar[rr]^-{(f_1,\ldots,f_n)} &&  (E^r)^n}$.

 (ii)$\Rightarrow$(iii): 
This is a special case of Theorem~\ref{T:functor for E}\eqref{I:image of E^r --> E^s}.

 (iii)$\Rightarrow$(iv): 
If $E^r/G \isom \HOM_R(M,E)$ for some f.p.\ torsion-free $M$,
then by Theorem~\ref{T:maximal order equivalence}\eqref{I:fully faithful},
the natural surjection $E^r \surjects E^r/G$
comes from some injection $M \injects R^r$.
Applying $\HOM_R(-,E)$ to 
\[
	0 \rightarrow M \rightarrow R^r \rightarrow R^r/M \rightarrow 0
\]
yields 
\[
	0 \rightarrow H \rightarrow E^r \surjects E^r/G
\]
for some $H$, which must be isomorphic to $G$.

 (iv)$\Rightarrow$(ii):
Choose a surjection $h \colon R^s \surjects M$.
Applying $\HOM_R(-,E)$ to the composition $R^s \stackrel{h}\surjects M \injects R^r$
produces a homomorphism $E^r \surjects E^r/G \injects E^s$ 
with kernel $G$.

 (ii)$\Rightarrow$(i): 
We may increase $s$ to assume that $r|s$.
Then $G$ is an intersection of $s/r$ endomorphisms of $E^r$,
so it is a kernel subgroup 
by Proposition~\ref{P:kernel subgroups}\eqref{I:intersection of kernel subgroups}.
\end{proof}

\begin{proposition}
\label{P:kernel subgroups and equivalence of categories}
 Let $E$ be an elliptic curve.
Let $R\colonequals \End E$.
Then the following are equivalent\textup{:}
\begin{enumerate}[\upshape (i)]
\item
For each $r \in \Z_{\ge 0}$, every subgroup scheme of $E^r$ is a kernel subgroup.
\item 
For each $r \in \Z_{\ge 0}$, every \emph{finite} subgroup scheme of $E^r$ is a kernel subgroup.
\item 
The functors $\HOM_R(-,E)$ and $\Hom(-,E)$
are inverse equivalences of categories.
\end{enumerate}
\end{proposition}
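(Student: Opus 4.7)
The plan is to establish the cycle (i)$\Rightarrow$(ii)$\Rightarrow$(iii)$\Rightarrow$(i). The first implication is immediate from the definitions, since every finite subgroup scheme is in particular a subgroup scheme. For the other two implications I would exploit Proposition~\ref{P:kernel subgroups and quotients}, which already provides the dictionary between kernel subgroups of $E^r$ and quotients that lie in the image of $\HOM_R(-,E)$.

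For (ii)$\Rightarrow$(iii), Theorem~\ref{T:maximal order equivalence} already tells us that $\HOM_R(-,E)$ is fully faithful and that $\Hom(-,E)$ inverts it on the essential image. It therefore remains only to show that every abelian variety $A$ isogenous to a power of $E$ lies in that essential image. Given such $A$ of dimension $r$, I would produce an isogeny $E^r\to A$ (via the quasi-inverse of an isogeny $A\to E^r$), and let $G$ be its (finite) kernel, so that $A\cong E^r/G$. By hypothesis (ii), $G$ is a kernel subgroup of $E^r$, and then the implication (i)$\Rightarrow$(iii) of Proposition~\ref{P:kernel subgroups and quotients} yields a f.p.\ torsion-free left $R$-module $M$ with $A\cong E^r/G\cong \HOM_R(M,E)$, so $A$ is in the image and (iii) follows.

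For (iii)$\Rightarrow$(i), let $G\subseteq E^r$ be an arbitrary subgroup scheme, not necessarily finite. The quotient $A\colonequals E^r/G$ exists as an fppf quotient, is proper, smooth, and connected, and therefore is an abelian variety; moreover, its dual embeds into $(E^r)^\vee\cong E^r$, so by Poincar\'e complete reducibility $A^\vee$, and hence $A$, is isogenous to a power of $E$. Hypothesis (iii) then provides a f.p.\ torsion-free left $R$-module $M$ with $A\cong\HOM_R(M,E)$, and the implication (iii)$\Rightarrow$(i) of Proposition~\ref{P:kernel subgroups and quotients} shows that $G$ is a kernel subgroup.

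The proof is essentially bookkeeping once Theorem~\ref{T:maximal order equivalence} and Proposition~\ref{P:kernel subgroups and quotients} are in hand; the only mild subtlety is the verification in the last paragraph that an arbitrary (possibly positive-dimensional) $G\subseteq E^r$ still yields a quotient $E^r/G$ that is an abelian variety isogenous to a power of $E$, which I expect to dispatch in one line using Poincar\'e reducibility on the dual side. No new technical ingredients seem necessary.
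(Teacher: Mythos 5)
Your proposal is correct and follows essentially the same route as the paper: (i)$\Rightarrow$(ii) is immediate, (ii)$\Rightarrow$(iii) uses Proposition~\ref{P:kernel subgroups and quotients}(i)$\Rightarrow$(iii) together with Theorem~\ref{T:maximal order equivalence}, and (iii)$\Rightarrow$(i) uses Proposition~\ref{P:kernel subgroups and quotients}(iii)$\Rightarrow$(i). The only cosmetic difference is in the last step, where you route the claim that $E^r/G$ is isogenous to a power of $E$ through the dual (embedding $(E^r/G)^\vee\hookrightarrow E^r$ and invoking Poincar\'e reducibility), while the paper simply asserts directly that $E^r/G$ is isogenous to $E^s$ for some $s\le r$; both are valid and the underlying content is the same.
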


\begin{proof}\hfill

 (i)$\Rightarrow$(ii): Trivial.

 (ii)$\Rightarrow$(iii): 
Suppose that $A$ is an abelian variety isogenous to $E^r$.
Then $A \isom E^r/G$ for some finite subgroup scheme $G$.
By assumption, $G$ is a kernel subgroup.
Proposition~\ref{P:kernel subgroups and quotients}(i)$\Rightarrow$(iii)
implies that $A$ is in the image of $\HOM_R(-,E)$.
The result now follows from Theorem~\ref{T:maximal order equivalence}.

 (iii)$\Rightarrow$(i): 
Let $G$ be a subgroup scheme of $E^r$.
Then $E^r/G$ is isogenous to $E^s$ for some $s \le r$.
By assumption, $\HOM_R(-,E)$ is an equivalence of categories,
so $E^r/G$ is of the form $\HOM_R(M,E)$.
By Proposition~\ref{P:kernel subgroups and quotients}(iii)$\Rightarrow$(i),
$G$ is a kernel subgroup.
\end{proof}

 In the next few sections,
we investigate when it holds that 
all finite subgroup schemes of powers of $E$ are kernel subgroups,
in order to determine when
$\HOM_R(-,E)$ and $\Hom(-,E)$
are inverse equivalences of categories.

\subsection{Prime-to-\texorpdfstring{\protect{\boldmath{$p$}}}{p} subgroups}
\label{S:prime-to-p subgroups}

 We continue to assume that $E$ is an elliptic curve and $R=\End E$.
Let $\ell$ be a prime not equal to $\Char k$.
Let $R_\ell \colonequals R\tensor \Z_\ell$.
The natural map $R_\ell \rightarrow \End_{\Z_\ell} T_\ell E$
is injective since an endomorphism that kills $E[\ell^n]$ for all $n$ is $0$,
and has saturated image since an endomorphism that kills $E[\ell]$ 
is equal to $\ell$ times an endomorphism.
Let $C \colonequals \End_{R_\ell} T_\ell E$, which is the commutant of $R_\ell$ 
in $\End_{\Z_\ell} T_\ell E \isom \M_2(\Z_\ell)$.
For any elliptic curve, we have $\rk R \in \{1,2,4\}$,
so one of the following holds:
\begin{enumerate}[\upshape (i)]
\item  $R_\ell = \Z_\ell$ and $C = \M_2(\Z_\ell)$;
\item\label{I:rank 2} $R_\ell = C = \Z_\ell \directsum \Z_\ell \alpha$,
a $\Z_\ell$-algebra that is a saturated rank~$2$ $\Z_\ell$-submodule of $\M_2(\Z_\ell)$ 
for some $\alpha \in \M_2(\Z_\ell)$; or 
\item  $R_\ell = \M_2(\Z_\ell)$ and $C = \Z_\ell$.
\end{enumerate}
(To see that $C=R_\ell$ in case~\eqref{I:rank 2}, 
one may argue as follows.
By \cite[Corollary~3 in III.\S19]{Mumford1970},
the $\Q$-algebra $R \tensor \Q$ is semisimple,
so $R \tensor \Q_\ell$ is either a degree~$2$ field extension of $\Q_\ell$,
or is conjugate to $\Q_\ell \times \Q_\ell$.
In either case, the commutant $C \tensor_{\Z_\ell} \Q_\ell$ of $R \tensor \Q_\ell$
in $\M_2(\Q_\ell)$ is $2$-dimensional.
On the other hand, an algebra generated by one element is commutative,
so $C$ contains $R_\ell$.
Also, $R_\ell$ is saturated in $\M_2(\Z_\ell)$.
The previous three sentences imply that $C=R_\ell$.)

Let $e \in \Z_{>0}$.

\begin{lemma}
\label{L:C/ell^e C module injects}
 Every finitely generated left $C/\ell^e C$-module injects into a free $C/\ell^e C$-module.
\end{lemma}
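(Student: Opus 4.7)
The plan is to prove that $S \colonequals C/\ell^e C$ is a quasi-Frobenius (i.e., self-injective Artinian) ring in each of the three cases listed above, and then invoke the standard consequence that every finitely generated left module over a QF ring embeds in a finitely generated free module.

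For the consequence: if $S$ is QF and $M$ is a f.g.\ left $S$-module, then $M^* \colonequals \Hom_S(M,S)$ is a f.g.\ right $S$-module, so there is a surjection of right modules $S^n \twoheadrightarrow M^*$; applying $\Hom_S(-,S)$ yields an injection $M^{**} \hookrightarrow (S^n)^* \isom S^n$ of left modules, and the QF hypothesis implies the biduality map $M \to M^{**}$ is an isomorphism for every finitely generated left $S$-module $M$. So it remains to verify QF-ness in each case.

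In case~(i), $S = \M_2(\Z/\ell^e\Z)$ is Morita-equivalent to the Artinian principal ideal ring $\Z/\ell^e\Z$, which is QF, and QF-ness is preserved under Morita equivalence. In case~(iii), $S = \Z/\ell^e\Z$ is directly QF, being a local Artinian chain ring. In case~(ii), I would use the $\Z_\ell$-basis $\{1,\alpha\}$ to present $C = R_\ell$ as $\Z_\ell[x]/(f(x))$, where $f \in \Z_\ell[x]$ is the monic characteristic polynomial of $\alpha$; then $S \isom \Z_\ell[x]/(\ell^e, f(x))$. Since $f$ is monic of degree~$2$, the pair $\{\ell^e, f(x)\}$ forms a regular sequence in the two-dimensional regular ring $\Z_\ell[x]$, so $S$ is a zero-dimensional complete intersection, hence Gorenstein, hence QF.

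The main obstacle is case~(ii), where $R_\ell$ may be a non-maximal $\Z_\ell$-order in a quadratic extension of $\Q_\ell$, so a direct structural classification of $S$-modules would be unpleasant. The complete intersection viewpoint circumvents this by exploiting the regularity of the ambient polynomial ring $\Z_\ell[x]$ instead of working inside $R_\ell$ itself.
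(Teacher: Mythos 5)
Your proof is correct but takes a genuinely different route from the paper's. The paper handles the rank-$2$ case (ii) by hand: it writes down the linear functional $\lambda$ extracting the coefficient of $\alpha$, checks that the trace-type pairing $(x,y)\mapsto\lambda(xy)$ on $C/\ell^e C$ is perfect, concludes that the Pontryagin dual $(C/\ell^e C)^D$ is isomorphic to $C/\ell^e C$ as a module over itself, and then dualizes a free presentation of $M^D$ to get $M\injects (C/\ell^e C)^r$. In other words, the paper verifies by explicit computation that $C/\ell^e C$ is a Frobenius algebra over $\Z/\ell^e\Z$ and runs the duality argument directly. You instead invoke the ambient structure theory: you identify $C/\ell^e C$ as the zero-dimensional complete intersection $\Z_\ell[x]/(\ell^e,f(x))$ cut out by a regular sequence in the regular ring $\Z_\ell[x]$, conclude it is Gorenstein hence quasi-Frobenius, and cite the standard fact that finitely generated modules over a QF ring embed in finitely generated free modules. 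Your argument is more conceptual and avoids the basis bookkeeping, at the cost of invoking more commutative-algebra machinery (complete intersection $\Rightarrow$ Gorenstein, plus the duality theory of QF rings); the paper's is more elementary and self-contained. The small point worth flagging in your write-up is that the isomorphism $C\isom\Z_\ell[x]/(f)$ uses that $\alpha$ generates $C$ as a $\Z_\ell$-algebra and that $1,\alpha$ are $\Z_\ell$-linearly independent, so the minimal polynomial of $\alpha$ genuinely has degree $2$ and the kernel of $\Z_\ell[x]\to C$ is exactly $(f)$; and since $\Z_\ell[x]$ is not local, ``complete intersection $\Rightarrow$ Gorenstein'' should be read as holding locally at each maximal ideal containing $(\ell^e,f)$, which is fine because Gorenstein is a local condition. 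Cases (i) and (iii) match the paper's treatment essentially exactly (Morita equivalence, resp.\ directly).
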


\begin{proof}
 If $C=\Z_\ell$, this is trivial.
For any ring $A$ and positive integer $n$, 
the category of $A$-modules is equivalent
to the category of $\M_n(A)$-modules \cite[Theorem~17.20]{Lam1999},
and the equivalence preserves injections, finite generation, 
and projectivity \cite[Remark~17.23(A)]{Lam1999};
applying this to $A=\Z_\ell/\ell^e \Z_\ell$ and $n=2$
shows that the case $C=\Z_\ell$ implies the case $C=\M_2(\Z_\ell)$.

 Finally, suppose that $C$ is of rank~$2$.
Then $C/\ell^e C$ is free of rank~$2$ over $\Z/\ell^e\Z$;
say with basis $1$, $\alpha$.
For $c \in C/\ell^e C$,
let $\lambda(c)$ be the coefficient of $\alpha$ in $c$.
Multiplying any nonzero element of $\ker \lambda$ by $\alpha$ gives an element
outside $\ker \lambda$.
Therefore the pairing 
\begin{align*}
	C/\ell^e C \times C/\ell^e C &\To \Z/\ell^e\Z \\
	x,y &\longmapsto \lambda(xy)
\end{align*}
is a perfect pairing.
In other words, the Pontryagin dual $(C/\ell^e C)^D$ is isomorphic to $C/\ell^e C$
as a $C/\ell^e C$-module.
If $M$ is a finitely generated $C/\ell^e C$-module,
there exists a surjection $(C/\ell^e C)^r \surjects M^D$ for some $r \in \Z_{\ge 0}$;
taking Pontryagin duals yields an injection $M \injects ((C/\ell^e C)^r)^D \isom (C/\ell^e C)^r$.
\end{proof}

\begin{lemma}
\label{L:Tate module is free}
 The group $(T_\ell E)^2$ is free as an $R_\ell$-module and as a $C$-module.
The group $E[\ell^e](k_s)^2$ is free as an $R/\ell^e R$-module and as a $C/\ell^e C$-module.
\end{lemma}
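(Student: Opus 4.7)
The approach is to split into the three cases for $(R_\ell, C)$ listed above, with case~(ii) the essential one. The second assertion will follow from the first by reducing modulo $\ell^e$: since $T_\ell E/\ell^e T_\ell E \isom E[\ell^e](k_s)$ and $R_\ell/\ell^e R_\ell = R/\ell^e R$, any isomorphism $(T_\ell E)^2 \isom R_\ell^{\,r}$ of $R_\ell$-modules descends to $E[\ell^e](k_s)^2 \isom (R/\ell^e R)^r$; the analogous remark applies to $C$.

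In cases~(i) and~(iii), we exploit the column decomposition
\[
	\M_2(\Z_\ell) \isom \Z_\ell^2 \directsum \Z_\ell^2
\]
of $\M_2(\Z_\ell)$ as a left module over itself; since $T_\ell E \isom \Z_\ell^2$ carries the standard $\M_2(\Z_\ell)$-action, this gives $(T_\ell E)^2 \isom \M_2(\Z_\ell)$ as a left $\M_2(\Z_\ell)$-module, while it is visibly free of rank~$4$ over $\Z_\ell$. In case~(i) the former identifies $(T_\ell E)^2$ with $C$ and the latter yields freeness over $R_\ell$; case~(iii) is symmetric.

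Case~(ii) is where the real work lies. Here $R_\ell = C$ is a saturated commutative $\Z_\ell$-subalgebra of $\M_2(\Z_\ell)$ of rank~$2$. The plan is to show that $T_\ell E$ itself is free of rank~$1$ over $R_\ell$, whence $(T_\ell E)^2 \isom R_\ell^2 = C^2$. Tensoring with $\Q_\ell$, the $2$-dimensional $\Q_\ell$-vector space $T_\ell E \tensor \Q_\ell$ is faithful over the $2$-dimensional semisimple $\Q_\ell$-algebra $R_\ell \tensor \Q_\ell$, hence free of rank~$1$. Fixing an $R_\ell \tensor \Q_\ell$-linear identification $T_\ell E \tensor \Q_\ell = R_\ell \tensor \Q_\ell$ realizes $T_\ell E$ as a full $R_\ell$-lattice in the total quotient ring; its ring of multipliers $R' \colonequals \{x \in R_\ell \tensor \Q_\ell : x T_\ell E \subseteq T_\ell E\}$ contains $R_\ell$, and each element of $R'$ acts on $T_\ell E$ as an $R_\ell$-module endomorphism (using commutativity), so $R' \subseteq \End_{R_\ell}(T_\ell E) = C = R_\ell$. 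Thus $T_\ell E$ is a proper $R_\ell$-module in the sense of orders, and by the classical theory of orders in commutative semisimple $\Q_\ell$-algebras it is therefore invertible; since $R_\ell$ is semi-local, invertible $R_\ell$-modules are free of rank~$1$, so $T_\ell E \isom R_\ell$.

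The principal obstacle is case~(ii)---specifically, identifying the ring of multipliers of $T_\ell E$ with $R_\ell$ (which relies crucially on the hypothesis $C = R_\ell$) and invoking that proper $R_\ell$-lattices in a commutative semisimple $\Q_\ell$-algebra are invertible. The remainder of the argument is formal.
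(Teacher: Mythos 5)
Your proof is correct, but it routes through the integral ($\Z_\ell$) theory where the paper stays at the residue field. The paper reduces both assertions, via $E[\ell^e](k_s) = T_\ell E/\ell^e T_\ell E$ together with Nakayama's lemma and a rank count, to the single statement that $E[\ell](k_s)^2$ is free over $R/\ell R$ and over $C/\ell C$; the three cases then become $\F_\ell$-linear algebra. In particular the delicate case~(ii) is dispatched by the observation that a faithful module of $\F_\ell$-dimension $2$ over a $2$-dimensional commutative $\F_\ell$-algebra is cyclic, hence free of rank~$1$. Your version instead proves the stronger integral statement that $T_\ell E$ is an invertible $R_\ell$-module, which is appealing structurally but imports the theory of orders and multiplier rings where a dimension count would suffice.

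One step in your case~(ii) deserves a caveat: the implication ``multiplier ring equals the order $\Rightarrow$ invertible'' does \emph{not} hold for arbitrary orders in commutative semisimple $\Q_\ell$-algebras; by Bass's theorem it is equivalent to the order being Gorenstein. It is automatic here, since $R_\ell = \Z_\ell[\alpha] \isom \Z_\ell[x]/(f)$ with $f$ the (degree-$2$) minimal polynomial of $\alpha$, a complete intersection and hence Gorenstein, but that should be said explicitly since you have phrased it as a general fact. If you prefer to avoid the orders machinery entirely, note that saturation of $R_\ell$ in $\M_2(\Z_\ell)$ forces $\bar\alpha \in \M_2(\F_\ell)$ to be a nonscalar, so some $v \in E[\ell](k_s)$ has $v, \bar\alpha v$ linearly independent; then $v$ is a free generator of $E[\ell](k_s)$ over $R/\ell R$, and Nakayama lifts this to $T_\ell E$, which is exactly the paper's route.
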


\begin{proof}
 Since $E[\ell^e](k_s) = T_\ell E/\ell^e T_\ell E$,
by Nakayama's lemma 
it is enough to check that $E[\ell](k_s)^2$ is free as an $A$-module,
for $A=R/\ell R$ and for $A=C/\ell C$.
Identify $E[\ell](k_s)^2$ with $\F_\ell^2$,
so that $A \subseteq \M_2(\F_\ell)$.
The case $A=\F_\ell$ is trivial.
If $A$ is $\F_\ell \directsum \F_\ell \alpha$ for some $\alpha \in \M_2(\F_\ell)$,
then every faithful $A$-module of dimension~$2$ over $\F_\ell$ is free.
If $A = \M_2(\F_\ell)$, then the free $A$-module $A$ is 
a direct sum of two copies of $\F_\ell^2$ (the two column spaces).
\end{proof}

\begin{lemma}
\label{L:commutants}
 The natural maps 
\begin{align*}
 	C/\ell^e C &\rightarrow \End_{R/\ell^e R} E[\ell^e](k_s) \\
 	R/\ell^e R &\rightarrow \End_{C/\ell^e C} E[\ell^e](k_s)
\end{align*}
are isomorphisms.
\end{lemma}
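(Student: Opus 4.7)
The plan is to set up the two natural inclusions and then promote them to equalities by a cardinality count. Write $T := E[\ell^e](k_s)$. First, the inclusion $R_\ell \hookrightarrow \End_{\Z_\ell} T_\ell E \cong \M_2(\Z_\ell)$ is saturated, as noted in the paragraph defining $R_\ell$. The inclusion $C \hookrightarrow \M_2(\Z_\ell)$ is also saturated, because $C$ is cut out as the intersection of the $\Z_\ell$-linear kernels of the commutator maps $[\cdot,x] \colon \M_2(\Z_\ell) \to \M_2(\Z_\ell)$ for $x \in R_\ell$, and kernels of homomorphisms between free $\Z_\ell$-modules are automatically saturated. Reducing modulo $\ell^e$ therefore preserves injectivity, yielding $R/\ell^e R \hookrightarrow \End_{\Z/\ell^e\Z}(T)$ and $C/\ell^e C \hookrightarrow \End_{\Z/\ell^e\Z}(T)$ with commuting images, hence natural inclusions $C/\ell^e C \subseteq \End_{R/\ell^e R}(T)$ and $R/\ell^e R \subseteq \End_{C/\ell^e C}(T)$ that I must promote to equalities.

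For the count, I will apply Lemma~\ref{L:Tate module is free}: $T^2$ is free as a left $(R/\ell^e R)$-module. Comparing $\Z/\ell^e\Z$-ranks (which are $4$ for $T^2$ and $\rk R$ for $R/\ell^e R$) forces the rank to be $r := 4/\rk R$, so that $|\End_{R/\ell^e R}(T^2)| = |R/\ell^e R|^{r^2} = \ell^{16 e/\rk R}$. On the other hand, the isomorphism $\End_{R/\ell^e R}(T^2) \cong \M_2(\End_{R/\ell^e R}(T))$ coming from $T^2 = T \oplus T$ gives $|\End_{R/\ell^e R}(T)|^4 = \ell^{16 e/\rk R}$, whence $|\End_{R/\ell^e R}(T)| = \ell^{4e/\rk R}$. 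The three cases yield $(\rk R, \rk C) \in \{(1,4),(2,2),(4,1)\}$, so $\rk R \cdot \rk C = 4$ always, and therefore $|\End_{R/\ell^e R}(T)| = \ell^{e \cdot \rk C} = |C/\ell^e C|$. Combined with the inclusion from the first step, this forces equality, giving the first isomorphism. The second isomorphism then follows by the symmetric argument with the roles of $R$ and $C$ interchanged, using that $T^2$ is also free as a $(C/\ell^e C)$-module.

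I do not expect any real obstacle: this is double-commutant bookkeeping for free modules, and the delicate freeness input in the ramified case has already been handled in Lemma~\ref{L:Tate module is free}. The one detail to get right is that in the non-commutative extremes (case (i), where $C = \M_2(\Z_\ell)$, and case (iii), where $R_\ell = \M_2(\Z_\ell)$) one has $\End_A(A^r) \cong \M_r(A^{\opp})$ rather than $\M_r(A)$; this has no effect on the computation since $|\M_r(A^{\opp})| = |A|^{r^2}$, so the cardinality argument goes through uniformly across all three cases.
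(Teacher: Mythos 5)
Your argument is correct, and it takes a genuinely different route from the paper. For the first isomorphism the paper invokes saturation of $C$ and $R_\ell$ in $\M_2(\Z_\ell)$ together with $C=\End_{R_\ell}T_\ell E$ and leaves the passage to the commutant mod $\ell^e$ implicit; for the second it goes through Morita theory, citing Lemma~\ref{L:Tate module is free} to conclude $E[\ell^e](k_s)$ is a generator of the category of finitely generated $R/\ell^e R$-modules (Lam, Theorem~18.8) and then applying a double-centralizer theorem for generators (Lam, Proposition~18.17). You instead set up both natural inclusions by reduction of the saturated embeddings and close both by a cardinality count powered by the same freeness lemma: $T^2$ free over $R/\ell^e R$ forces it to have rank $4/\rk R = \rk C$, whence $\#\End_{R/\ell^e R}(T) = \ell^{e\,\rk C} = \#(C/\ell^e C)$, and symmetrically for the other side. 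What you get is a self-contained, uniform argument for all three shapes of $(R_\ell, C)$ with no appeal to external module-theoretic machinery; what the paper's route buys is that the generator/double-centralizer statement continues to apply in settings where a crude cardinality count would not be available (e.g., over non-finite coefficient rings), though for this lemma that extra generality is not needed. One small point worth making explicit in your write-up is that beyond injectivity of $C/\ell^e C \to \M_2(\Z/\ell^e\Z)$ you are also using that its image lands inside $\End_{R/\ell^e R}(T)$, which holds because $C$ and $R_\ell$ commute in $\M_2(\Z_\ell)$ before reduction; you indicate this, but it is the step that makes the map ``natural'' rather than merely injective.
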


\begin{proof}
 The first map is an isomorphism since $C = \End_{R_\ell} T_\ell E$
and $C$ and $R_\ell$ are saturated in $\End T_\ell E \isom \M_2(\Z_\ell)$.
Lemma~\ref{L:Tate module is free} 
and \cite[Theorem~18.8(3)$\Rightarrow$(1)]{Lam1999} imply that
$E[\ell^e](k_s)$ is a generator of 
the category of finitely generated $R/\ell^e R$-modules, 
so \cite[Proposition~18.17(2)(d)]{Lam1999} yields the second isomorphism.
\end{proof}

 Recall that $\Galois_k = \Gal(k_s/k)$.
There is a group homomorphism $\Galois_k \rightarrow C^\times$
since each $\sigma \in \Galois_k$ respects the $R$-action on 
the groups $E[\ell^e](k_s)$ and $T_\ell E$.

\begin{proposition}
\label{P:C/ell^e C-submodule is kernel subgroup}
 Let $E$ be an elliptic curve over a field $k$.
Let $\ell$, $e$, $C$ be as above.
Let $G$ be a subgroup scheme of $E[\ell^e]^r$ for some $r$.
Then $G$ is a kernel subgroup
if and only if $G(k_s)$ is a $C/\ell^e C$-submodule of $E[\ell^e]^r(k_s)$.
\end{proposition}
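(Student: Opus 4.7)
I will handle the two directions separately, writing $V \colonequals E[\ell^e](k_s)$ so that $V^r = E[\ell^e]^r(k_s)$. The forward direction ($\Rightarrow$) is immediate from the commutant setup: if $G = (E^r)[I]$ for some $I \subseteq \End(E^r) = M_r(R)$, then each $\alpha \in I$ acts on $V^r$ as a matrix whose entries lie in $R/\ell^e R \subseteq \End_{C/\ell^e C} V$ (the inclusion being part of Lemma~\ref{L:commutants}), and so commutes with the diagonal $C/\ell^e C$-action on $V^r$. Hence $G(k_s) = \bigcap_{\alpha \in I} \ker(\alpha|_{V^r})$ is a $C/\ell^e C$-submodule.

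For the reverse direction ($\Leftarrow$), the strategy is to produce a morphism $f \colon E^r \rightarrow E^N$ whose kernel cuts out $G$ inside $E[\ell^e]^r$, and then combine this with the observation that $E[\ell^e]^r = (E^r)[\ell^e \cdot \id]$ is itself a kernel subgroup. Assume $G(k_s)$ is a $C/\ell^e C$-submodule of $V^r$. The finite $C/\ell^e C$-module $V^r/G(k_s)$ embeds into some $(C/\ell^e C)^t$ by Lemma~\ref{L:C/ell^e C module injects}. By Lemma~\ref{L:Tate module is free}, $V^2$ is a free $C/\ell^e C$-module of rank $m \geq 1$, so $C/\ell^e C$ is a direct summand of $V^2$ and hence $(C/\ell^e C)^t$ embeds into $V^{2t}$. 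Composing yields a $C/\ell^e C$-linear map $\phi \colon V^r \to V^{2t}$ with kernel exactly $G(k_s)$.

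Here is where the double-commutant input is decisive: by the second assertion of Lemma~\ref{L:commutants}, $\End_{C/\ell^e C} V \cong R/\ell^e R$, so $\phi$ corresponds to a matrix $\bar{f} \in M_{2t, r}(R/\ell^e R)$. Lifting $\bar{f}$ entrywise along the surjection $R \twoheadrightarrow R/\ell^e R$ produces a homomorphism $f \colon E^r \rightarrow E^{2t}$ that induces $\phi$ on $\ell^e$-torsion. Since $\ell \ne \Char k$, the group scheme $E[\ell^e]^r$ is étale, so $(\ker f) \cap E[\ell^e]^r$ is determined by its $k_s$-points, which equal $\ker \phi = G(k_s)$; thus $G = (\ker f) \cap E[\ell^e]^r$. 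Finally, $\ker f$ is a kernel subgroup by Proposition~\ref{P:kernel subgroups and quotients}(ii)$\Rightarrow$(i), and Proposition~\ref{P:kernel subgroups}\eqref{I:intersection of kernel subgroups} then shows the intersection $G$ is a kernel subgroup. The main subtlety is recognizing that any $C/\ell^e C$-linear map between powers of $V$ is represented by a matrix from $R$, and this is exactly what Lemma~\ref{L:commutants} provides.
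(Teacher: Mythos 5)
Your proof is correct and follows essentially the same route as the paper: the easy forward direction uses that elements of $R$ act $C/\ell^e C$-linearly, and the reverse direction embeds $V^r/G(k_s)$ into a free $C/\ell^e C$-module (Lemma~\ref{L:C/ell^e C module injects}) and thence into a power of $V$ via the freeness of $V^2$ (Lemma~\ref{L:Tate module is free}), then converts the resulting $C/\ell^e C$-linear map to a matrix over $R/\ell^e R$ via the double-commutant statement of Lemma~\ref{L:commutants}, lifts to $R$, and intersects the resulting kernel with $E[\ell^e]^r$. The only cosmetic difference is order of operations: the paper first invokes Galois-equivariance to descend the abstract module map to a morphism of \'etale group schemes and then identifies the matrix over $R/\ell^e R$, whereas you invoke the commutant identification first and get the \'etale group scheme map for free from the $R$-lift; both are fine.
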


\begin{proof}
 Suppose that $G(k_s)$ is a $C/\ell^e C$-submodule of $E[\ell^e]^r(k_s)$.
Let $H \colonequals E[\ell^e]^r/G$.
Then $H(k_s)$ is a finitely generated $C/\ell^e C$-module.
By Lemma~\ref{L:C/ell^e C module injects},
$H(k_s)$ injects into a free $C/\ell^e C$-module,
which in turn injects into $E[\ell^e]^s(k_s)$ for some $s$.
Because of the homomorphisms $\Galois_k \rightarrow C^\times \rightarrow (C/\ell^e C)^\times$,
the $C/\ell^e C$-module homomorphism $H(k_s) \rightarrow E[\ell^e]^s(k_s)$
is a $\Galois_k$-module homomorphism,
so it comes from a homomorphism $H \rightarrow E[\ell^e]^s$ of \'etale group schemes.
The composition $E[\ell^e]^r \surjects H \injects E[\ell^e]^s$ 
is given by an $s \times r$ matrix $N_e$ with entries in
$\End_{C/\ell^e C} E[\ell^e](k_s) = R/\ell^e R$ (the equality is Lemma~\ref{L:commutants}).
Lift $N_e$ to $N \in \M_{s \times r}(R)$.
Then $G$ is the intersection of the kernel subgroups $E[\ell^e]^r$
and $\ker(N \colon E^r \rightarrow E^s)$.
By Propositions \ref{P:kernel subgroups}\eqref{I:intersection of kernel subgroups} 
and \ref{P:kernel subgroups and quotients}
$G$ is a kernel subgroup.

 Conversely, if $G$ is a kernel subgroup, say the kernel of $E^r \rightarrow E^s$,
then it is also the kernel of $E[\ell^e]^r \rightarrow E[\ell^e]^s$,
which is a homomorphism of $C/\ell^e C$-modules,
so $G$ is a $C/\ell^e C$-module.
\end{proof}

 The group homomorphism $\Galois_k \rightarrow C^\times$
induces algebra homomorphisms $\Z_\ell[\Galois_k] \rightarrow C$
and $\F_\ell[\Galois_k] \rightarrow C/\ell C$.

\begin{proposition}
\label{P:l-power subgroups}
 Let $E$, $k$, $\ell$, $R$, $R_\ell$, $C$ be as above.
The following are equivalent\textup{:}
\begin{enumerate}[\upshape (i)]
\item
 The homomorphism $\F_\ell[\Galois_k] \rightarrow C/\ell C$ is surjective.
\item
 The homomorphism $\Z_\ell[\Galois_k] \rightarrow C$ is surjective.
\item 
 Every $\ell$-power order subgroup scheme of $E^r$ for every $r$ is a kernel subgroup.
\end{enumerate}
\end{proposition}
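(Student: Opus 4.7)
The plan is to establish (i)$\Leftrightarrow$(ii) by Nakayama's lemma, and to run (ii)$\Rightarrow$(iii)$\Rightarrow$(i) via Proposition~\ref{P:C/ell^e C-submodule is kernel subgroup} combined with the free-module structure in Lemma~\ref{L:Tate module is free}.

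First, for (i)$\Leftrightarrow$(ii), let $D \subseteq C$ denote the image of $\Z_\ell[\Galois_k] \rightarrow C$. Since $C$ has $\Z_\ell$-rank at most $4$, the cokernel $C/D$ is a finitely generated module over the local ring $\Z_\ell$. Condition~(i) says exactly that $\ell(C/D) = C/D$, so Nakayama gives $D=C$, which is~(ii); the reverse implication is trivial by reduction modulo $\ell$.

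For (ii)$\Rightarrow$(iii), any $\ell$-power-order subgroup scheme $G \subseteq E^r$ lies inside $E[\ell^e]^r$ for some $e$; since $\ell \ne \Char k$, it is \'etale and determined by the $\Galois_k$-stable subgroup $G(k_s)$. The Galois action factors as $\Galois_k \rightarrow C^\times \rightarrow (C/\ell^e C)^\times$, so $G(k_s)$ is a module over the image of $\Z_\ell[\Galois_k]$ in $C/\ell^e C$. By~(ii), that image is all of $C/\ell^e C$, so Proposition~\ref{P:C/ell^e C-submodule is kernel subgroup} makes $G$ a kernel subgroup.

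For (iii)$\Rightarrow$(i), I would exploit the rank-one case $r=2$: by Lemma~\ref{L:Tate module is free}, $E[\ell](k_s)^2$ is free of rank~$1$ over $C/\ell C$, so a choice of generator identifies it with $C/\ell C$. The image $D$ of $\F_\ell[\Galois_k]$ in $C/\ell C$ is then a $\Galois_k$-stable $\F_\ell$-subspace of $E[\ell]^2(k_s)$, and hence corresponds to a subgroup scheme $G \subseteq E[\ell]^2$. Condition~(iii) forces $G$ to be a kernel subgroup, and Proposition~\ref{P:C/ell^e C-submodule is kernel subgroup} then promotes $D$ to a $C/\ell C$-submodule of $C/\ell C$; since $1 \in D$, we conclude $D = C/\ell C$, which is~(i). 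The one genuinely clever point is using Lemma~\ref{L:Tate module is free} to realize $C/\ell C$ itself as a Galois submodule of some $E[\ell]^r(k_s)$; everything else is a formal dictionary between \'etale group schemes, Galois modules, and Nakayama's lemma.
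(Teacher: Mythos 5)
Your (i)$\Leftrightarrow$(ii) (Nakayama) and (ii)$\Rightarrow$(iii) arguments track the paper exactly. Where you genuinely diverge is in (iii)$\Rightarrow$(i): the paper argues by contraposition with an explicit five-case analysis of the $\F_\ell$-algebra $C/\ell C$ ($\F_\ell$, $\F_\ell[\epsilon]/(\epsilon^2)$, $\F_\ell\times\F_\ell$, $\F_{\ell^2}$, $\M_2(\F_\ell)$), exhibiting in each deficient case a line or $\F_{\ell^2}$-line in $E[\ell]$ or $E^2[\ell]$ that is Galois-stable but not a $C/\ell C$-submodule. You instead argue directly and case-free: realize $C/\ell C$ as a $\Galois_k$-submodule of $E[\ell]^2(k_s)$ via the freeness of Lemma~\ref{L:Tate module is free}, observe that the image $D$ of $\F_\ell[\Galois_k]$ is then a Galois-stable subspace, so by (iii) and Proposition~\ref{P:C/ell^e C-submodule is kernel subgroup} it must be a left $C/\ell C$-submodule, i.e.\ a left ideal of $C/\ell C$ containing~$1$, hence all of $C/\ell C$. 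This is cleaner and avoids the algebra classification; the paper's approach has the advantage of producing explicit counterexamples.

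One technical slip to repair: $E[\ell](k_s)^2$ is free of rank~$4/\dim_{\F_\ell}(C/\ell C)$ over $C/\ell C$, not rank~$1$ in general --- rank~$1$ only happens when $C/\ell C\isom\M_2(\F_\ell)$. But your argument only needs a $C/\ell C$-module embedding of $C/\ell C$ into $E[\ell]^2(k_s)$, which you get from freeness of any positive rank (split off a rank-one free summand and use the image of~$1$ as your ``generator''); alternatively, when $\rk_\Z C=2$ use $E[\ell](k_s)$ itself, and when $C=\Z_\ell$ there is nothing to prove since $1\in D$. With that fix the argument is complete.
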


\begin{proof}\hfill

 (i)$\Rightarrow$(ii): Nakayama's lemma.

 (ii)$\Rightarrow$(iii): 
Let $G$ be an $\ell$-power subgroup scheme of $E^r$,
say $G \subseteq E[\ell^e]^r$.
Then $G(k_s)$ is a $\Z_\ell[\Galois_k]$-module.
Since $\Z_\ell[\Galois_k] \rightarrow C$ is surjective,
$G(k_s)$ is also a $C$-module,
and hence a $C/\ell^e C$-module.
By Proposition~\ref{P:C/ell^e C-submodule is kernel subgroup},
$G$ is a kernel subgroup.

 (iii)$\Rightarrow$(i): Suppose that $\F_\ell[\Galois_k] \rightarrow C/\ell C$ is not surjective;
let $D$ be the image.
The algebra $C/\ell C$ is one of 
$\F_\ell$, 
$\left\{ \left(\begin{smallmatrix} a & b \\ 0 & a \end{smallmatrix} \right)\right\} \isom \F_\ell[\epsilon]/(\epsilon^2)$,
$\left\{\left( \begin{smallmatrix} a & 0 \\ 0 & b \end{smallmatrix} \right)\right\} \isom \F_\ell \times \F_\ell$,
$\F_{\ell^2}$,
or $\M_2(\F_\ell)$.
The first is excluded since it has no nontrivial subalgebras.
% The prefix non uses a hyphen only when it is used in an atypical word combination or if the hyphen is needed for pronunciation.
In the second, third, and fourth cases, $D$ can only be $\F_\ell$,
and it is easy to find a subspace of $\F_\ell^2 \isom E[\ell](k_s)$
that is not a $C/\ell C$-module.
In the fifth case, $D$ is contained in a copy
of either $\left\{\left( \begin{smallmatrix} a & b \\ 0 & c \end{smallmatrix}\right) \right\}$
or $\F_{\ell^2}$.
Now $\left\{\left( \begin{smallmatrix} a & b \\ 0 & c \end{smallmatrix}\right) \right\}$ 
fixes a line in $\F_\ell^2$ not fixed by $\M_2(\F_\ell)$.
And $\F_{\ell^2}$ fixes an $\F_{\ell^2}$-line in $\F_{\ell^2}^2 \isom E^2[\ell](k_s)$
that is not fixed by $\M_2(\F_\ell)$.
Thus in each case, there is a subgroup scheme of $E[\ell]$ or $E^2[\ell]$
that is not a $C/\ell C$-module, 
and hence by Proposition~\ref{P:C/ell^e C-submodule is kernel subgroup}
not a kernel subgroup.
\end{proof}

\subsection{\texorpdfstring{\protect{\boldmath{$p$}}}{p}-power subgroups}
\label{S:p-power subgroups}

% \begin{lemma}
% Let $E$ be an elliptic curve over a field $k$ of characteristic~$p$.
% If $\End E \ne \Z$, then $E^{(p^e)} \isom E$ for some $e \ge 1$.
% \end{lemma}

\begin{proposition}
\label{P:p-power in ordinary}
 Let $E$ be an ordinary elliptic curve over a field $k$ of characteristic~$p$.
Assume that $\End E \ne \Z$ \textup{(}automatic if $k$ is finite\textup{)}.
Then every $p$-power order subgroup scheme $G \subseteq E^r$ is a kernel subgroup.
\end{proposition}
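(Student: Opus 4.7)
The plan is to exploit the splitting of $p$ in the endomorphism ring of an ordinary elliptic curve with extra endomorphisms, reducing the claim to the connected and étale parts of the $p$-divisible group separately. Since $E$ is ordinary with $R = \End E \supsetneq \Z$, $R$ is an order in an imaginary quadratic field $K$. Ordinariness forces $p$ to split in $K$ and the conductor of $R$ to be coprime to $p$ (over $\F_q$ this follows from Waterhouse's results, using that $\Z[\pi]$ has $p$-prime conductor because of the unit trace of Frobenius; in general one reduces to the algebraic closure where $\End E_{\bar k}$ is the maximal order). Writing $pR = \pp\bar\pp$ with $\pp \ne \bar\pp$, $\pp + \bar\pp = R$, and $R/\pp \cong R/\bar\pp \cong \F_p$, one has $\pp^e + \bar\pp^e = R$, $\pp^e\bar\pp^e = p^e R$, and $E[p^e] = E[\pp^e] \oplus E[\bar\pp^e]$; after relabeling, $E[\pp^e]$ is the connected part of $E[p^e]$ (a form of $\mu_{p^e}$ over $\bar k$) and $E[\bar\pp^e]$ is the étale part.

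Now let $G \subseteq E^r$ be of $p$-power order, so $G \subseteq E^r[p^e]$ for some $e$. The two-sided ideals $I_1 \colonequals \pp^e M_r(R)$ and $I_2 \colonequals \bar\pp^e M_r(R)$ of $\End E^r = M_r(R)$ are coprime, with $E^r[I_1] = E[\pp^e]^r$ and $E^r[I_2] = E[\bar\pp^e]^r$. After reducing to perfect $k$ (possible by base change, since $\End E$ is stable under purely inseparable extensions), any subgroup scheme of $E[\pp^e]^r \oplus E[\bar\pp^e]^r$ decomposes as $G = G_\circ \oplus G_{\textup{\'et}}$ with $G_\circ \colonequals G \cap E[\pp^e]^r$ and $G_{\textup{\'et}} \colonequals G \cap E[\bar\pp^e]^r$: the connected--étale sequence of $G$ splits over perfect $k$, and the étale summand lands in $E[\bar\pp^e]^r$ because $\Hom(\textup{\'etale},\textup{connected}) = 0$. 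By Proposition~\ref{P:kernel subgroups}(\ref{I:sum of kernel subgroups}), it suffices to show each of $G_\circ$ and $G_{\textup{\'et}}$ is a kernel subgroup.

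For each component, the strategy is to construct a morphism $\phi \colon E[\mathfrak{a}^e]^r \to E[\mathfrak{a}^e]^s$ (with $\mathfrak{a} \in \{\pp,\bar\pp\}$) whose kernel is the corresponding component, and then lift $\phi$ to $\tilde\phi \colon E^r \to E^s$ via the surjection
\[
	M_{s,r}(R) \twoheadrightarrow M_{s,r}(R/\mathfrak{a}^e) \cong M_{s,r}(\Z/p^e\Z) = \Hom(E[\mathfrak{a}^e]^r, E[\mathfrak{a}^e]^s).
\]
The component then equals $\ker\tilde\phi \cap E^r[\mathfrak{a}^e]$, an intersection of kernel subgroups, hence itself a kernel subgroup by Proposition~\ref{P:kernel subgroups}(\ref{I:intersection of kernel subgroups}). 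For the étale piece, $E[\bar\pp^e]^r(k_s) \cong (\Z/p^e\Z)^r$ with $\Galois_k$ acting through a scalar character (the $\bar\pp$-adic Galois representation of $E$), so the quotient $(\Z/p^e\Z)^r/G_{\textup{\'et}}(k_s)$ embeds $\Galois_k$-equivariantly into $(\Z/p^e\Z)^s$ for some $s$ (any $\Z_p$-linear embedding is automatically equivariant when $\Galois_k$ acts by scalars). For the connected piece, Cartier duality---identifying $E[\pp^e]^\vee \cong E[\bar\pp^e]$ via the Weil pairing on $E[p^e]$---translates the construction of $\phi$ into finding a map $E[\bar\pp^e]^s \to E[\bar\pp^e]^r$ with image equal to the étale subgroup $(E[\pp^e]^r/G_\circ)^\vee \subseteq E[\bar\pp^e]^r$, which exists by the étale argument.

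The main obstacle is the connected case: Cartier duality must convert the question about infinitesimal subgroups into one about étale subgroups of the dual, and one must verify that the dualized morphism genuinely yields the required $\phi$. A secondary technicality is the conductor-prime-to-$p$ property for non-finite ground fields, along with the descent from the perfect closure.
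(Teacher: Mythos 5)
Your proposal is correct and follows essentially the same route as the paper: split $(p) = \pp\bar\pp$ in the (prime-to-$p$-conductor) order $R$, decompose $G$ into a piece in $E[\pp^e]^r$ and a piece in $E[\bar\pp^e]^r$, handle the \'etale piece via the scalar Galois action and integer matrices, and handle the connected piece by Cartier duality back to the \'etale case. One simplification you missed: the base change to the perfect closure (and the ensuing descent) is unnecessary, because the decomposition $G = G_\circ \oplus G_{\textup{\'et}}$ with $G_\circ = G \cap E[\pp^e]^r$ and $G_{\textup{\'et}} = G \cap E[\bar\pp^e]^r$ is exactly what the proof of Proposition~\ref{P:kernel subgroups}(\ref{I:sum of kernel subgroups}) gives over an arbitrary field, coming from the idempotents of $R/p^eR \cong R/\pp^e \times R/\bar\pp^e$; also, $\End E_{\bar k}$ need not be the \emph{maximal} order (only its conductor is prime to $p$), though that does not affect your argument.
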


\begin{proof}
 The ring $R\colonequals \End E \isom \End E_{\kbar}$ is a quadratic order.
Although $R$ is not necessarily a Dedekind domain, its conductor is prime to $p$,
so it makes sense to speak of the splitting behavior of $(p)$ in $R$.
In fact, since $E$ is ordinary, $(p)$ splits, say as $\pp \qq$.
So $E[p]$ is the direct sum of group schemes $E[\pp]$ and $E[\qq]$, 
each of order~$p$ by Theorem~\ref{T:functor for E}\eqref{I:order of Hom(N,E)}.
Since $E$ is ordinary, one of them, say $E[\pp]$, is \'etale,
and the other is connected.
For any $e \in \Z_{\ge 0}$, we have $(p^e) = \pp^n \qq^n$
so $E[p^e] \isom E[\pp^e] \directsum E[\qq^e]$.
The Jordan--H\"older factors of $E[\pp^e]$ are isomorphic to $E[\pp]$,
so $E[\pp^e]$ is \'etale; similarly $E[\qq^e]$ is connected.
We have $G \subseteq E[p^e]^r$ for some $e$.
By Proposition~\ref{P:kernel subgroups}\eqref{I:sum of kernel subgroups},
we may assume that $G \subseteq E[\pp^e]^r$
or $G \subseteq E[\qq^e]^r$.

 In the first case, $E[\pp^e](k^s) \isom \Z/p^e\Z$,
so $G$ is the kernel of a homomorphism $E[\pp^e]^r \rightarrow E[\pp^e]^s$
given by a matrix in $\M_{s \times r}(\Z)$.
Since $E[\pp^e]$ is a kernel subgroup, so is $E[\pp^e]^r$, and so is $G$,
by Propositions \ref{P:kernel subgroups}\eqref{I:intersection of kernel subgroups} 
and \ref{P:kernel subgroups and quotients}.

 In the second case, we take Cartier duals:
$E[\pp^e]^r \surjects G^\vee$.
Then $G^\vee$ is the cokernel of some homomorphism $E[\pp^e]^s \rightarrow E[\pp^e]^r$
given by a matrix $N \in \M_{r \times s}(\Z)$.
So $G$ is the kernel of the homomorphism $E[\qq^e]^r \rightarrow E[\qq^e]^s$
given by the transpose $N^T \in \M_{s \times r}(\Z)$.
Since $E[\qq^e]$ is a kernel subgroup, so is $E[\qq^e]^r$,
and so is $G$, 
by Propositions \ref{P:kernel subgroups}\eqref{I:intersection of kernel subgroups} 
and \ref{P:kernel subgroups and quotients}.
\end{proof}

\begin{proposition}
\label{P:p-power in supersingular}
 Let $E$ be a supersingular elliptic curve over a field $k$ of characteristic~$p$.
\begin{enumerate}[\upshape (a)]
\item \label{I:supersingular over F_p}
If $k = \F_p$, then every $p$-power order subgroup scheme $G \subseteq E^r$ is a kernel subgroup,
and in fact is a kernel of an endomorphism of $E^r$.
\item 
If $k = \F_{p^2}$ and $\rk R = 4$ (i.e., $\#E(\F_{p^2}) = (p \pm 1)^2$),
then \emph{every} subgroup scheme $G \subseteq E^r$ is a kernel subgroup.
\item \label{I:alpha_p over F_{p^2}}
If $k = \F_{p^2}$ and $\rk R \ne 4$, 
then there exists a copy of $\boldalpha_p$ in $E \times E$
that is not a kernel subgroup.
\item  \label{I:alpha_p over F_{p^3}}
If $k$ is $\F_{p^a}$ for some $a \ge 3$,
or if $k$ is infinite,
then there exists a copy of $\boldalpha_p$ in $E \times E$
that is not a kernel subgroup.
\end{enumerate}
\end{proposition}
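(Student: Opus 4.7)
The plan is to handle the four parts in order of decreasing formality. Part~(b) follows immediately from Section~\ref{S:new proof}: since $\rk R = 4$ over $\F_{p^2}$, Proposition~\ref{P:characterization of maximality} forces $E$ to be maximal or minimal, whereupon Theorem~\ref{T:category of maximal abelian varieties}\eqref{I:equivalence of categories of maximal abelian varieties} (together with its minimal analogue) gives that $\HOM_R(-,E)$ and $\Hom(-,E)$ are inverse equivalences of categories, and Proposition~\ref{P:kernel subgroups and equivalence of categories}(iii)$\Rightarrow$(i) yields the conclusion.

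For part~(a) the approach parallels Proposition~\ref{P:l-power subgroups}, with the $p$-divisible group $E[p^\infty]$ (equivalently, the Dieudonn\'e module) in place of the $\ell$-adic Tate module. Since $E$ is supersingular over $\F_p$, its Frobenius $\pi$ satisfies $\pi^2 = -p$, so $R \supseteq \Z[\pi]$ and $R_p \colonequals R \otimes \Z_p$ is a discrete valuation ring with uniformizer~$\pi$. By Tate's theorem for $p$-divisible groups over finite fields, $\End(E[p^\infty]) = R_p$, and a dimension count shows that $E[p^\infty]$ is free of rank one over $R_p$, so $E^r[p^\infty]$ is free of rank~$r$. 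A $p$-power order subgroup $G \subseteq E^r$ corresponds contravariantly to a finite-length quotient; because $R_p$ is a DVR and the quotient is torsion, the kernel is a full-rank free $R_p$-submodule realized by a matrix $\widetilde{\phi} \in \M_r(R_p)$. Approximating $\widetilde{\phi}$ modulo $p^e$ (for $p^e$ annihilating $G$) by $\phi \in \M_r(R)$ yields an endomorphism of $E^r$ with $\ker\phi = G$.

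For parts~(c) and~(d), Proposition~\ref{P:kernel subgroups and quotients}(i)$\Leftrightarrow$(iii) reduces the problem to exhibiting $G \cong \boldalpha_p$ in $E \times E$ such that $E^2/G$ lies outside the essential image of $\HOM_R(-,E)$ on rank-two torsion-free modules. Theorem~\ref{T:modules over quadratic order} (with Theorem~\ref{T:modules over quaternion order} if $\rk R = 4$) classifies these modules in terms of a finite chain of over-orders of~$R$ and Picard data, so the image is a finite set of abelian surfaces of the form $(E/E[I_1]) \times (E/E[I_2])$. Meanwhile, supersingularity of $E$ gives $\ker F_E \cong \boldalpha_p$ and, using $\End_k \boldalpha_p = k$, $\Hom_k(\boldalpha_p, E) \cong k$, so copies of $\boldalpha_p \subseteq E \times E$ over $k$ correspond to $\bbP^1(k)$, and their iso classes of quotients are orbits of $\Aut(E^2) = \GL_2(R)$ on $\bbP^1(k)$, factoring through $\GL_2(\bar R)$ for $\bar R \colonequals \rho(R) \subseteq k$ the image of the $R$-action on $\ker F_E$.

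In case~(d), with $k$ infinite, $|\bbP^1(k)| = \infty$ exceeds the finite image; for $k = \F_{p^a}$ with $a \geq 3$, the subfield $\bar R$ has size at most $p^2$ (bounded by the residue field of $R$ at a prime above $p$, for any order $R$ of $\Z$-rank at most~$4$), so $\bar R \subsetneq k$ and the orbit count on $\bbP^1(k)$ exceeds the image. Case~(c) is the delicate one: over $\F_{p^2}$ with $\rk R = 2$, the ring $R$ is necessarily the maximal quadratic order in $\Q(\pi) \in \{\Q(i),\Q(\sqrt{-3})\}$ (it coincides with the centralizer of $\pi$ in the maximal quaternion order~$\calO$), and the classification of supersingular Frobenius polynomials shows this occurs only when $p \equiv 3 \pmod{4}$ (for $\Q(i)$) or $p \equiv 2 \pmod{3}$ (for $\Q(\sqrt{-3})$)---precisely the congruences making $p$ inert in~$R$. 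Then $\Pic R = 0$ reduces the image of $\HOM_R$ on rank-two modules to $\{E^2\}$, while $E^2/G \not\cong E^2$ for any $G \cong \boldalpha_p$: such an isomorphism would furnish an endomorphism of $E^2$ of degree~$p$, i.e., an element of $\M_2(R)$ whose reduced norm is~$p$, which is impossible since $p$ is not a norm from~$R$ in the inert case. The main obstacle lies here in~(c), in verifying that these are the only rank-two cases and that $\Pic R = 0$ and ``$p$ is not a norm'' hold in precisely them.
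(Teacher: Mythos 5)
Your part~(b) follows the paper exactly, and your part~(a) is a genuinely different and interesting route (Dieudonn\'e module of $E[p^\infty]$ over the discrete valuation ring $R_p$, then approximation), but it has a real gap: a matrix $\phi \in \M_r(R)$ with $\phi \equiv \widetilde\phi \pmod{p^e}$ will have its \emph{$p$-primary} kernel equal to $G$, but nothing in the construction forces $\det\phi$ to be a unit at primes $\ell \ne p$, so $\ker\phi$ may pick up spurious prime-to-$p$ torsion and you would only get $G$ as a kernel subgroup (intersecting with $E^r[p^e]$), not literally as the kernel of one endomorphism. The paper avoids this entirely by factoring $E^r \to E^r/G$ into a chain of $\boldalpha_p$-isogenies (Lemma~\ref{L:supersingular isogeny factorization}), using transitivity of $\GL_r(\Z)$ on $\Hom(\boldalpha_p,E^r)\setminus\{0\}=\F_p^r\setminus\{0\}$ together with $E/\boldalpha_p = E/E[\pi_E] \isom E$ to show each step returns an abelian variety isomorphic to $E^r$; the composite is then a single endomorphism with kernel $G$.

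For parts~(c) and~(d) your strategy of comparing the (finite) image of $\HOM_R(-,E)$ on rank-two modules against the set of quotients $E^2/G$ is workable in spirit but has two problems. First, the orbit argument is not airtight: two copies of $\boldalpha_p$ in different $\GL_2(\bar R)$-orbits can a priori have abstractly isomorphic quotients, so ``iso classes of quotients are orbits'' needs justification (only the ``same orbit $\Rightarrow$ isomorphic quotient'' direction is clear). Second, and more seriously, your case~(c) explicitly relies on ``$p$ inert in $R$, hence $p$ not a norm.'' This is false as stated: for $k=\F_{p^2}$, supersingular $E$ with $\rk R = 2$, the Frobenius is $p\zeta$ for a root of unity of order $3$, $4$, or $6$, and the ramified cases occur. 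For $p=2$, one has $\pi_E = 2i$, $R \isom \Z[i]$, and $2$ is \emph{ramified} and \emph{is} a norm ($\mathrm{Nm}(1+i)=2$); similarly $p=3$ with $R\isom\Z[\zeta_3]$. In those cases $E^2$ genuinely has endomorphisms of degree $p$, so the ``no element of norm $p$'' argument collapses. The paper's proof of~(c) and~(d) sidesteps all of this with a uniform and much shorter counting argument: assuming $\boldalpha_p\subseteq E$ is a kernel subgroup $E[I]$ (else done immediately), one shows the kernel-subgroup copies of $\boldalpha_p$ inside $E\times E$ are in bijection with $\PP^1(R/I)$, while \emph{all} copies are in bijection with $\PP^1(k)$ via $\Hom(\boldalpha_p, E\times E) \isom k^2$; the order computation $\#(R/I) = p^{\rk R/2}$ (from Theorem~\ref{T:functor for E}\eqref{I:order of Hom(N,E)}) then forces $\#k \le p^2$, with equality iff $\rk R = 4$. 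This handles inert and ramified cases, and all $k$, simultaneously.
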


\begin{proof}
 The kernel of $\pi_{E,p} \colon E \rightarrow E^{(p)}$ is $\boldalpha_p$.
Suppose that $\boldalpha_p \subseteq E$ is a kernel subgroup.
By Proposition~\ref{P:kernel subgroups and quotients}(i)$\Rightarrow$(iv), 
$\boldalpha_p \isom \HOM_R(R/I,E)$ for some left \hbox{$R$-ideal} $I$.
By Theorem~\ref{T:functor for E}\eqref{I:order of Hom(N,E)},
$p = \#(R/I)^{2/\rk R}$.
We have three cases:
\begin{itemize}
\item If $R=\Z$, this is a contradiction.
\item If $\rk R = 2$, then $\#(R/I)=p$, so $R/I \isom \F_p$.
Since $E$ is supersingular, $p$ is ramified or inert in $R$,
and the above implies that $p$ is ramified.
\item If $\rk R = 4$, then $\#(R/I)=p^2$, so $I$ is the unique ideal of index $p^2$ in $R$,
and $R/I \isom \F_{p^2}$.
\end{itemize}
If $J$ is an $R$-module with $I^2 \subsetneq J \subsetneq R^2$ (here $I^2$ means $I \times I$),
then $R^2/J \isom R/I$ (since $R/I$ is a field),
and the surjection $R^2 \surjects R^2/J$ gives rise to 
an injection $\boldalpha_p \injects E \times E$.
Conversely, any \emph{kernel} subgroup $\boldalpha_p \subseteq E \times E$
arises from such a $J$.
So such kernel subgroups are in bijection with $\PP^1(R/I)$.
On the other hand, $\End \boldalpha_p \isom k$,
so $\Hom(\boldalpha_p,E \times E) = k^2$,
and the copies of $\boldalpha_p$ in $E \times E$
are in bijection with $\PP^1(k)$.
Thus if every $\boldalpha_p$ in $E \times E$ is a kernel subgroup,
then $\PP^1(R/I)$ is in bijection with $\PP^1(k)$,
so $\#(R/I)=\#k$;
i.e., $k \isom R/I$, which is $\F_p$ or $\F_{p^2}$ as above.
This proves \eqref{I:alpha_p over F_{p^2}} and~\eqref{I:alpha_p over F_{p^3}}.

\begin{enumerate}[\upshape (a)]
\item 
 By Lemma~\ref{L:supersingular isogeny factorization},
$E^r \rightarrow E^r/G$ factors as a chain of $p$-isogenies, each with kernel $\boldalpha_p$.
If we show that any quotient $E^r/\boldalpha_p$ is isomorphic to $E^r$,
then each abelian variety in the chain must be isomorphic to $E^r$,
so $G$ is a kernel of an endomorphism of $E^r$, as desired.

 The group $\GL_r(\Z) \subseteq \GL_r(\End E)$ acts on $E^r$,
and acts transitively on the nonzero elements of $\Hom(\boldalpha_p,E^r)=\F_p^r$.
Therefore it suffices to consider the quotient $E^r/\boldalpha_p$
in which the $\boldalpha_p$ is contained in $E \times 0 \times \cdots \times 0$.
Now $E/\boldalpha_p = E/E[\pi_E] \isom E$,
so $E^r/\boldalpha_p \isom E^r$.
\item 
 The abelian variety $E^r/G$ is isogenous to a power of $E$,
so by Theorem~\ref{T:category of maximal abelian varieties}\eqref{I:equivalence of categories of maximal abelian varieties},
it is of the form $\HOM_R(M,E)$.
By Proposition~\ref{P:kernel subgroups and quotients}(iii)$\Rightarrow$(i),
$G$ is a kernel subgroup.
\qedhere
\end{enumerate}
\end{proof}

\section{Abelian varieties isogenous to a power of an elliptic curve}
\label{S:abelian varieties}

 Let $E$ be an elliptic curve over $k$.
We break into cases, first according to whether $E$ is ordinary or supersingular,
and next according to $\rk \End E$ and $\#k$.
By convention, elliptic curves over a field of characteristic~$0$ are included
among the ordinary curves.

\subsection{\texorpdfstring{\protect{\boldmath{$E$}}}{E} is ordinary and
  \texorpdfstring{\protect{\boldmath{$\rk \End E = 1$}}}{rk End E = 1}}
\label{S:End E = Z}

\begin{theorem}
\label{T:End E = Z}
 Fix an elliptic curve $E$ over a field $k$ such that $\End E \isom \Z$.
\begin{enumerate}[\upshape (a)]
\item 
The image of $\HOM_R(-,E)$
consists of abelian varieties \emph{isomorphic} to a power of $E$.
\item \label{I:Galois not surjective}
The functors $\HOM_R(-,E)$ and $\Hom(-,E)$
are inverse equivalences of categories
\textup{(}i.e., every abelian variety isogenous to a power of $E$ is isomorphic
to a power of $E$\textup{)}
if and only if 
$\Char k=0$ and for every prime $\ell$ 
the homomorphism $\F_\ell[\Galois_k] \rightarrow \End E[\ell](k_s) \isom \M_2(\F_\ell)$
is surjective.
\end{enumerate}
\end{theorem}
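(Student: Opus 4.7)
\textbf{Part (a)} is immediate: since $R = \End E \isom \Z$, every f.p.\ torsion-free left $R$-module is free, and applying $\HOM_\Z(-,E)$ to $\Z^n$ (via the trivial presentation $0 \to \Z^n \to \Z^n \to \Z^n \to 0$) yields $E^n$. So the image of $\HOM_\Z(-,E)$ is exactly $\{E^n : n \ge 0\}$, as claimed.

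For \textbf{part (b)}, I start with Proposition~\ref{P:kernel subgroups and equivalence of categories}: the functors are inverse equivalences if and only if every finite subgroup scheme of every power $E^r$ is a kernel subgroup. Any finite subgroup $G \subseteq E^r$ decomposes canonically into primary parts $G = \bigoplus_\ell G_\ell$. Because the two-sided ideals of $\End E^r = \M_r(\Z)$ all have the form $n\M_r(\Z)$, the ideals $\ell^a \M_r(\Z)$ for distinct primes $\ell$ are pairwise coprime and cut out the primary parts of $G$. By Proposition~\ref{P:kernel subgroups}\eqref{I:sum of kernel subgroups}, $G$ is a kernel subgroup if and only if each $G_\ell$ is. So the question reduces, prime by prime, to whether every $\ell$-power order subgroup scheme of each $E^r$ is a kernel subgroup.

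Next I rule out positive characteristic. If $\Char k = p > 0$, then $E[p]$ has order $p^2$ and contains a subgroup scheme $H$ of order $p$ (take a copy of $\mu_p$ or $\Z/p$ if $E$ is ordinary, or a copy of $\boldalpha_p$ if $E$ is supersingular). Since $R = \Z$, the ideals of $R$ are just $(n)$, so the kernel subgroups of $E$ are the $E[n]$, which have order $n^2$; since $p$ is not a square, $H$ is not a kernel subgroup. Hence $\Char k = 0$ is necessary, and we may assume it henceforth. Then every prime $\ell$ satisfies $\ell \ne \Char k$, so Proposition~\ref{P:l-power subgroups} applies: every $\ell$-power subgroup scheme of each $E^r$ is a kernel subgroup if and only if $\F_\ell[\Galois_k] \rightarrow C/\ell C$ is surjective, where $C$ is the commutant of $R_\ell = \Z_\ell$ in $\End_{\Z_\ell} T_\ell E \isom \M_2(\Z_\ell)$. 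Since $\Z_\ell$ lies in the center, $C = \M_2(\Z_\ell)$, and Lemma~\ref{L:commutants} identifies $C/\ell C \isom \M_2(\F_\ell) \isom \End E[\ell](k_s)$. Quantifying over all primes $\ell$ gives the stated criterion.

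The proof is essentially an assembly of the results of Sections~\ref{S:classifying torsion-free modules}--\ref{S:kernel subgroups}, so I do not expect a serious obstacle; the two points requiring care are the primary decomposition step, which depends on the simple structure of two-sided ideals in $\M_r(\Z)$, and the identification of $C$, which simplifies dramatically only because of the special hypothesis $R = \Z$.
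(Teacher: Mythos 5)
Your proof is correct and follows essentially the same route as the paper's: part (a) via the freeness of f.p.\ torsion-free $\Z$-modules, and part (b) via Proposition~\ref{P:kernel subgroups and equivalence of categories}, primary decomposition using Proposition~\ref{P:kernel subgroups}\eqref{I:sum of kernel subgroups}, the observation that for $\Char k = p > 0$ the order-$p$ subgroup $\ker\pi_{E,p} \subseteq E$ cannot be a kernel subgroup because $\#E[I]$ is always a square when $\End E = \Z$, and then Proposition~\ref{P:l-power subgroups} for characteristic $0$. The extra detail you supply on the two-sided ideals of $\M_r(\Z)$ and on identifying $C = \M_2(\Z_\ell)$ is correct but implicit in the paper.
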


\begin{proof}[Proof of Theorem~\ref{T:End E = Z}]
\hfill
\begin{enumerate}[\upshape (a)]
\item  Every f.p.\ torsion-free $\Z$-module is free.
\item 
 By Proposition~\ref{P:kernel subgroups and equivalence of categories},
$\HOM_R(-,E)$ and $\Hom(-,E)$
are equivalences if and only if every finite subgroup scheme $G$ is a kernel subgroup.
By Proposition~\ref{P:kernel subgroups}\eqref{I:sum of kernel subgroups},
we need only consider $G$ of prime power order.

 If $\Char k = p >0$, then $\#\ker \pi_{E,p} = p$, 
but $\#E[I]$ is a square for every nonzero ideal $I \subseteq \Z$,
so $\ker \pi_{E,p}$ is not a kernel subgroup.
If $\Char k = 0$, then apply Proposition~\ref{P:l-power subgroups}(i)$\Leftrightarrow$(iii)
for every $\ell$.
\qedhere
\end{enumerate}
\end{proof}

\begin{remark}
\label{R:Borel and Cartan}
Surjectivity of 
$\F_\ell[\Galois_k] \rightarrow \M_2(\F_\ell)$
\emph{fails} if and only if
the image $G$ of $\Galois_k \rightarrow \GL_2(\F_\ell)$ 
is contained in a Borel subgroup or a nonsplit Cartan subgroup,
as we now explain.
Let $A$ be the image of $\F_\ell[\Galois_k] \rightarrow \M_2(\F_\ell)$.
View $V \colonequals \F_\ell^2$ as an $A$-module.
If $V$ is reducible, then surjectivity fails and $G$
is contained in a Borel subgroup.
So suppose that $V$ is irreducible.
By Schur's lemma \cite[XVII.1.1]{LangAlgebra}, 
$\End_A V$ is a division algebra $D$.
But $D \subseteq \M_2(\F_\ell)$, so $D$ is $\F_\ell$ or $\F_{\ell^2}$.
By Wedderburn's theorem \cite[XVII.3.5]{LangAlgebra}, $A \isom \End_D V$.
If $D=\F_\ell$, then $A=\M_2(\F_\ell)$, and $G$
is not contained in a Borel subgroup or a nonsplit Cartan subgroup.
If $D \isom \F_{\ell^2}$, then $\dim_D V =1$,
so $A \isom \End_D V \isom \F_{\ell^2}$, 
and $G$ is contained in 
the nonsplit Cartan subgroup $A \intersect \GL_2(\F_\ell)$.
\end{remark}

\begin{example}
Let $E$ be the elliptic curve $X_0(11)$ over $\Q$, 
with equation $y^2+y=x^3-x^2-10x-20$.  As in \cite[5.5.2]{Serre1972}, 
the image of $\Galois_\Q \rightarrow \Aut E[5] \isom \GL_2(\F_5)$ 
is contained in a Borel subgroup, 
so by Theorem~\ref{T:End E = Z}\eqref{I:Galois not surjective}
and Remark~\ref{R:Borel and Cartan},
the functors $\HOM_R(-,E)$ and $\Hom(-,E)$
are \emph{not} inverse equivalences of categories.
\end{example}

\begin{example}
Let $E$ be the elliptic curve over $\Q$ of conductor $37$
with equation $y^2+y=x^3-x$.  
By \cite[5.5.6]{Serre1972},  
the homomorphism $\Galois_\Q \rightarrow \Aut (E[\ell]) \isom \GL_2(\F_{\ell})$ 
is surjective for every prime $\ell$,
so by Theorem~\ref{T:End E = Z}\eqref{I:Galois not surjective}, 
the functors $\HOM_R(-,E)$ and $\Hom(-,E)$
\emph{are} inverse equivalences of categories.
\end{example}

\subsection{\texorpdfstring{\protect{\boldmath{$E$}}}{E} is ordinary and
  \texorpdfstring{\protect{\boldmath{$\rk \End E = 2$}}}{rk End E = 2}} 
\label{S:rank 2}

 Fix an ordinary elliptic curve $E$ over a field $k$
such that $\rk \End E=2$.
(These are called \emph{CM elliptic curves} in \cite[Section~3]{Kani2011}.)
Then $\End E \isom \End E_{\kbar}$, 
because if an endomorphism becomes divisible by a positive integer $n$ over
an extension field, it kills $E[n]$, so it is divisible by $n$ already over $k$.
Let $R\colonequals \End E$ and $K\colonequals \Frac R$.

 If $E'$ is an elliptic curve isogenous to $E$,
then $\End E'$ is another order $R'$ in $K$.
Let $f_{E'}$ be the conductor of $R'$,
i.e., the index of $R'$ in its integral closure.
More generally, if $A$ is an abelian variety isogenous to $E^r$,
then $\End A$ is an order in $M_r(K)$,
and its center $Z(\End A)$ is an order in $Z(M_r(K))=K$,
and we let $f_A$ be the conductor of $Z(\End A)$.

\begin{theorem}
\label{T:equivalence of categories for CM curves}
 Fix an ordinary elliptic curve $E$ over a field $k$ such that $\rk \End E=2$.
Let $R\colonequals \End E$.
The image of $\HOM_R(-,E)$
consists of the abelian varieties $A$ isogenous to a power of $E$
such that $f_A|f_E$, i.e., such that $R \subseteq Z(\End A)$.
These are exactly the products of elliptic curves $E'$
each isogenous to $E$ and satisfying $f_{E'}|f_E$.
\end{theorem}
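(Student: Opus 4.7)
The plan is to prove the description of the image in two steps, from which the characterization as products follows formally.

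\emph{Step 1 (easy direction: image is contained in the claimed class).} If $A = \HOM_R(M,E)$, then by Theorem~\ref{T:maximal order equivalence}\eqref{I:fully faithful} we have $\End A \isom \End_R(M)^{\opp}$; since $R$ is commutative it maps centrally into $\End_R(M)$, so $R \subseteq Z(\End A)$, i.e., $f_A \mid f_E$. Moreover, Theorem~\ref{T:maximal order equivalence}\eqref{I:essential image consists of products} combined with Theorem~\ref{T:modules over quadratic order}\eqref{I:chain of quadratic orders} shows that every f.p.\ torsion-free $R$-module decomposes as $\bigoplus_j I_j$ with $I_j$ an invertible ideal of an order $R_j$ satisfying $R \subseteq R_j \subseteq K$, so $\HOM_R(M,E) \isom \prod_j \HOM_R(I_j,E)$ exhibits every image object as a product of elliptic curves; each factor has endomorphism ring $\End_R(I_j) = (I_j:I_j) = R_j$ by invertibility of $I_j$ over $R_j$, whose conductor divides $f_E$.

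\emph{Step 2 (reverse direction: $R \subseteq Z(\End A)$ implies $A$ is in the image).} I would write $A \isom E^r/G$ for some finite subgroup scheme $G$ and then prove $G$ is a kernel subgroup so that Proposition~\ref{P:kernel subgroups and quotients}(i)$\Rightarrow$(iii) places $A$ in the image. To ensure $G$ is stable under the diagonal $R$-action on $E^r$, I would first arrange the isogeny $\phi \colon E^r \twoheadrightarrow A$ to be $R$-equivariant; since both $E^r \otimes \Q$ and $A \otimes \Q$ are free rank-$r$ modules over $K = R \otimes \Q$ in the isogeny category, such an $R$-equivariant isomorphism exists there, and clearing denominators yields the desired $\phi$. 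Once $G$ is $R$-stable, I would decompose into prime-power parts via Proposition~\ref{P:kernel subgroups}\eqref{I:sum of kernel subgroups} and verify each is a kernel subgroup as follows: for $\ell \ne p$, since $R$ has rank $2$ we are in case~(ii) of Section~\ref{S:prime-to-p subgroups}, where $C = R_\ell$, so every $R_\ell$-submodule of $E[\ell^e]^r(k_s)$ is automatically a $C/\ell^e C$-submodule and Proposition~\ref{P:C/ell^e C-submodule is kernel subgroup} applies; for $\ell = p$, Proposition~\ref{P:p-power in ordinary} applies directly since $E$ is ordinary with $\End E \ne \Z$. The ``exactly the products'' assertion then follows: given a product $A = \prod_i E_i$ with $f_{E_i} \mid f_E$, the diagonal $R$-action is central in $\End A$ (because $\Hom(E_i, E_j) \otimes \Q \isom K$ is $R$-bilinear for CM elliptic curves), so $R \subseteq Z(\End A)$ and Step~2 places $A$ in the image.

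\emph{The hardest part} will be the production of the $R$-equivariant isogeny $\phi$. The subtle point is that $R$ embeds into $K$ in two potentially distinct ways---via $R = \End E \subseteq K$ and via $R \hookrightarrow Z(\End A) \hookrightarrow Z(\End A) \otimes \Q = K$---that could differ by the nontrivial Galois involution of $K/\Q$. If they disagree, I would pass to $A^\vee$ using Theorem~\ref{T:dual abelian variety}: the Rosati involution acts as complex conjugation on $K$, so this swaps the two embeddings and restores agreement, after which the isogeny-category argument produces $\phi$ integrally.
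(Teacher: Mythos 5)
Your proposal is correct and follows the paper's strategy closely: choose an $R$-equivariant isogeny $\phi\colon E^r \to A$, decompose $G \colonequals \ker\phi$ into prime-power pieces, and apply Propositions~\ref{P:C/ell^e C-submodule is kernel subgroup}, \ref{P:p-power in ordinary}, and~\ref{P:kernel subgroups}\eqref{I:sum of kernel subgroups}; the converse inclusion uses the decomposition of torsion-free $R$-modules into invertible ideals over overorders, exactly as in your Step~1. The one place you diverge is the ``hardest part'' you flag at the end: the Galois ambiguity is a phantom and the duality detour is unnecessary. Starting from \emph{any} isogeny $\phi \colon E^r \to A$, one defines the $R$-action on $A$ by $\rho(r) \colonequals \phi \circ r \circ \phi^{-1}$ (conjugation in the isogeny category, with $r$ acting diagonally on $E^r$). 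Since $r$ is central in $\M_r(K) = \End(E^r)\otimes\Q$, the map $\rho$ is an injective ring homomorphism from $K$ into $Z(\End A)\otimes\Q$, hence an isomorphism onto that field. Any such isomorphism carries the order of conductor $f_E$ to the order of conductor $f_E$, because in an imaginary quadratic field the order of a given conductor is unique (and in particular Galois-stable). Thus $\rho(R)$ is precisely the conductor-$f_E$ order inside $Z(\End A)\otimes\Q$, which lies in $Z(\End A) \subseteq \End A$ by the hypothesis $f_A \mid f_E$. So integrality of the $R$-action on $A$ is automatic, the isogeny $\phi$ itself dictates the relevant embedding, and there is no conflict between embeddings to resolve; once this is observed, your Step~2 coincides exactly with the paper's argument.
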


\begin{proof}
 Suppose that $\phi \colon E^r \rightarrow A$ is an isogeny and $f_A|f_E$.
Since $f_A|f_E$, there is an $R$-action on $A$ 
such that $\phi$ respects the $R$-actions.
Let $G\colonequals \ker \phi$, so $G(k_s)$ is an $R$-module.
Write $G = \Directsum_\ell G_\ell$, where $G_\ell$ is a group scheme of
$\ell$-power order.
For $\ell \ne \Char k$, 
we are in the case $R_\ell=C$ of Section~\ref{S:prime-to-p subgroups},
so $G_\ell(k_s)$ is also a $C/\ell^e C$-module for some $e$, 
and Proposition~\ref{P:C/ell^e C-submodule is kernel subgroup}
shows that $G_\ell$ is a kernel subgroup.
If $\Char k = p>0$, then $G_p$ is a kernel subgroup by Proposition~\ref{P:p-power in ordinary}.
By Proposition~\ref{P:kernel subgroups}\eqref{I:sum of kernel subgroups},
$G$ is a kernel subgroup.
By Proposition~\ref{P:kernel subgroups and quotients}(i)$\Rightarrow$(iii),
the abelian variety $A \isom E^r/G$ is in the image of $\HOM_R(-,E)$.

 Conversely, if $A$ is in the image of $\HOM_R(-,E)$
then by Theorem~\ref{T:maximal order equivalence}\eqref{I:essential image consists of products},
$A$ is a product of elliptic curves of the form $\HOM_R(I,E)$.
Because the functor $\HOM_R(-,E)$ is fully faithful,
if $E' = \HOM_R(I,E)$ then $E'$ is isogenous to $E$
and $\End E' \isom \End_R I$, which contains $R$ since $R$ is commutative.
In particular, $f_{E'} | f_E$.
Finally, $f_A$ is the least common multiple of the $f_{E'}$,
so $f_A | f_E$ too.
\end{proof}

\begin{theorem}
\label{T:ordinary elliptic curve}
 Fix an ordinary elliptic curve $E$ over a finite field $\F_q$.
Let $R\colonequals \End E$.
Then $\HOM_R(-,E)$ and $\Hom(-,E)$
are equivalences of categories
if and only if $\Z[\pi_E]=R$.
\end{theorem}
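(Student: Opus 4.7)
The plan is to combine Theorem~\ref{T:equivalence of categories for CM curves} with the observation that the $q$-power Frobenius of any abelian variety in the isogeny class of $E^r$ lies in the center of its endomorphism ring and corresponds to $\pi_E$, together with Waterhouse's existence theorem for elliptic curves with prescribed endomorphism ring.

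Since $E$ is ordinary over $\F_q$, the ring $R=\End E$ is an imaginary quadratic order containing $\Z[\pi_E]$, so the hypotheses of Section~\ref{S:rank 2} apply. By Theorem~\ref{T:maximal order equivalence} the functor $\HOM_R(-,E)$ is fully faithful and $\Hom(-,E)$ is its inverse on the image, so the two functors form an equivalence of categories if and only if every abelian variety $A$ isogenous to a power of $E$ lies in the image of $\HOM_R(-,E)$. By Theorem~\ref{T:equivalence of categories for CM curves}, that image consists of precisely those $A$ for which $R \subseteq Z(\End A)$. Thus the proof reduces to deciding when $R$ is contained in every such center $Z(\End A)$.

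For the ``if'' direction, suppose $R = \Z[\pi_E]$. Let $A$ be isogenous to $E^r$, and let $K \colonequals R \tensor \Q$. Any isogeny $\phi\colon A \to E^r$ defined over $\F_q$ satisfies $\phi \circ \pi_A = \pi_{E^r} \circ \phi$, so under the induced isomorphism $\End A \tensor \Q \isom \M_r(K)$ the element $\pi_A$ corresponds to the central element $\pi_E \cdot I_r$. Hence $\pi_A \in Z(\End A)$ and the subring $\Z[\pi_E] \subseteq K$ sits inside $Z(\End A)$. Under the assumption $R = \Z[\pi_E]$ this gives $R \subseteq Z(\End A)$, so $A$ lies in the image and the functors form an equivalence.

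For the ``only if'' direction, assume $R \supsetneq \Z[\pi_E]$. By Waterhouse's classification of endomorphism rings in an ordinary isogeny class over $\F_q$ \textup{(}\cite[Theorem~4.2]{Waterhouse1969}\textup{)}, every order of $K$ containing $\Z[\pi_E]$ occurs as $\End A$ for some elliptic curve $A/\F_q$ isogenous to $E$; in particular there exists an elliptic curve $A$ isogenous to $E$ with $\End A = \Z[\pi_E]$. For this $A$ we have $Z(\End A) = \Z[\pi_E] \subsetneq R$, so by Theorem~\ref{T:equivalence of categories for CM curves} $A$ is not in the image of $\HOM_R(-,E)$, and the functors fail to be equivalences. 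The only nontrivial input here is Waterhouse's existence result; the identification of $\pi_A$ with $\pi_E$ in $K$ via isogeny is routine from the functoriality of Frobenius, and everything else has already been assembled in the preceding sections.
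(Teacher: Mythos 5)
Your proposal is correct and takes essentially the same route as the paper: reduce via Theorem~\ref{T:equivalence of categories for CM curves} to the question of whether $R \subseteq Z(\End A)$ holds for every $A$ in the isogeny class, use that $\pi_A$ is central and identified with $\pi_E$ under any $\F_q$-isogeny for the ``if'' direction, and invoke Waterhouse's Theorem~4.2(2) to produce a curve $A$ with $\End A = \Z[\pi_A] \isom \Z[\pi_E] \subsetneq R$ for the ``only if'' direction.
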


\begin{proof}
 Suppose that $\Z[\pi_E]=R$.
If $A$ is isogenous to a power of $E$,
then $\pi_A$ has the same minimal polynomial as $\pi_E$,
so $Z(\End A)$ contains $\Z[\pi_A] \isom \Z[\pi_E]$;
i.e., $f_A|f_E$ is automatic.

 On the other hand, if $\Z[\pi_E] \ne R$,
then $E$ is isogenous to an elliptic curve $E'$
satisfying $\End E' = \Z[\pi_{E'}]$ \cite[Theorem~4.2(2)]{Waterhouse1969}.
Theorem~\ref{T:equivalence of categories for CM curves}
shows that $E'$ is not in the image of $\HOM_R(-,E)$,
so $\HOM_R(-,E)$ is not an equivalence of categories.
\end{proof}

We can also give a more general criterion 
that applies even if $k$ is not finite.

\begin{theorem}
\label{T:ordinary over infinite field}
 Fix an ordinary elliptic curve $E$ over a field $k$ such that $\rk \End E=2$.
Then $\HOM_R(-,E)$ and $\Hom(-,E)$
are equivalences of categories
if and only if for every prime $\ell \ne \Char k$,
there exists $\sigma \in \Galois_k$ whose action on $E[\ell](k_s)$
is not multiplication by a scalar.
\end{theorem}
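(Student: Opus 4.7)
The plan is to reduce to the $\ell$-primary case for each prime $\ell$, handle $\ell=\Char k$ by appealing to the work already done for the ordinary case, and then for $\ell\neq\Char k$ show that the surjectivity criterion of Proposition~\ref{P:l-power subgroups} collapses to a statement about whether $\Galois_k$ acts by scalars on $E[\ell](k_s)$.

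First, by Proposition~\ref{P:kernel subgroups and equivalence of categories}, $\HOM_R(-,E)$ and $\Hom(-,E)$ are inverse equivalences if and only if for every $r\geq 0$, every finite subgroup scheme $G\subseteq E^r$ is a kernel subgroup. Given such a $G$, decompose $G=\bigoplus_\ell G_\ell$ into its $\ell$-primary parts. The two-sided ideals $(\ell^{n_\ell})$ of $\End E^r=\M_r(R)$ are pairwise coprime, and $G_\ell\subseteq E^r[\ell^{n_\ell}]=(E^r)[(\ell^{n_\ell})]$; hence by Proposition~\ref{P:kernel subgroups}\eqref{I:sum of kernel subgroups}, $G$ is a kernel subgroup if and only if each $G_\ell$ is. So the equivalence-of-categories question reduces to: for every prime $\ell$, is every $\ell$-power subgroup scheme of every $E^r$ a kernel subgroup?

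Next I dispose of the $p$-primary case, when $p\colonequals\Char k>0$. Since $E$ is ordinary with $\rk R=2$, the hypothesis $\End E\neq\Z$ of Proposition~\ref{P:p-power in ordinary} is satisfied, so every $p$-power order subgroup scheme of $E^r$ is automatically a kernel subgroup, imposing no condition on $k$. Thus the equivalence of categories is governed purely by the primes $\ell\neq \Char k$, and by Proposition~\ref{P:l-power subgroups} it holds if and only if for every such $\ell$, the algebra homomorphism $\F_\ell[\Galois_k]\to C/\ell C$ is surjective, where $C=\End_{R_\ell}T_\ell E$ is the commutant computed in Section~\ref{S:prime-to-p subgroups}.

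The crux is to translate this surjectivity statement. Because $\rk R=2$, we are in case~\eqref{I:rank 2} of the trichotomy in Section~\ref{S:prime-to-p subgroups}, so $R_\ell=C$ is a saturated rank-$2$ $\Z_\ell$-subalgebra of $\M_2(\Z_\ell)$, and $C/\ell C$ is a $2$-dimensional $\F_\ell$-algebra containing $\F_\ell\cdot 1$. By Lemma~\ref{L:Tate module is free}, $E[\ell](k_s)$ is a free rank-$1$ module over $C/\ell C$; and by Lemma~\ref{L:commutants}, the action of $C/\ell C$ on $E[\ell](k_s)$ identifies $C/\ell C$ with $\End_{R/\ell R}E[\ell](k_s)$. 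Under this identification, the image of $\sigma\in\Galois_k$ in $C/\ell C$ acts on $E[\ell](k_s)$ exactly as $\sigma$ does. Therefore $\sigma$ acts as multiplication by a scalar on $E[\ell](k_s)$ if and only if its image in $C/\ell C$ lies in the subring $\F_\ell\cdot 1$.

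Let $D\subseteq C/\ell C$ be the $\F_\ell$-subalgebra generated by the image of $\Galois_k$; the map $\F_\ell[\Galois_k]\to C/\ell C$ is surjective iff $D=C/\ell C$. Since $D\supseteq\F_\ell\cdot 1$ and $\dim_{\F_\ell}C/\ell C=2$, either $D=\F_\ell\cdot 1$ (which happens precisely when every $\sigma$ acts as a scalar) or $D=C/\ell C$ (which happens as soon as some $\sigma$ has non-scalar image). Combining this with the reductions above proves the theorem. The only step that is not a direct citation is the dimension-counting alternative in this last paragraph, but given the structural facts collected in Section~\ref{S:prime-to-p subgroups} it is essentially automatic; the main conceptual work has already been done in proving Propositions~\ref{P:l-power subgroups} and~\ref{P:p-power in ordinary}.
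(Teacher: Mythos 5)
Your proof is correct and follows essentially the same route as the paper: reduce to the kernel-subgroup criterion via Proposition~\ref{P:kernel subgroups and equivalence of categories}, split a finite subgroup scheme into primary parts via Proposition~\ref{P:kernel subgroups}\eqref{I:sum of kernel subgroups}, dispose of the $p$-primary part with Proposition~\ref{P:p-power in ordinary}, invoke Proposition~\ref{P:l-power subgroups} to get the surjectivity condition for $\F_\ell[\Galois_k]\to C/\ell C$, and then conclude by noting that $C/\ell C$ is $2$-dimensional over $\F_\ell$ so the only proper $\F_\ell$-subalgebra is $\F_\ell\cdot 1$. The paper's proof is just a terser version of exactly this. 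Your intermediate use of Lemmas~\ref{L:Tate module is free} and~\ref{L:commutants} to identify ``acts by a scalar'' with ``lies in $\F_\ell\cdot 1$'' is more than necessary; the injectivity of $C/\ell C\hookrightarrow\End E[\ell](k_s)$ (which holds because $C=R_\ell$ is saturated in $\M_2(\Z_\ell)$) already identifies the two notions directly.
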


\begin{proof}
 By Propositions \ref{P:kernel subgroups and equivalence of categories}(i)$\Leftrightarrow$(iii), 
\ref{P:kernel subgroups}\eqref{I:sum of kernel subgroups}, 
and~\ref{P:p-power in ordinary},
the functors $\HOM_R(-,E)$ and $\Hom(-,E)$
are equivalences if and only if for each $\ell \ne \Char k$,
the homomorphism $\F_\ell[\Galois_k] \rightarrow C/\ell C$ is surjective.
Since $\dim_{\F_\ell} C/\ell C = 2$, surjectivity is equivalent
to the image of $\F_\ell[\Galois_k] \rightarrow C/\ell C \subseteq \End E[\ell](k_s) \isom \M_2(\F_\ell)$ 
not being $\F_\ell$.
\end{proof}

\begin{example}
Let $E$ be the elliptic curve $y^2=x^3-x$ over
$k \colonequals \Q(\sqrt{-1})$; then $j(E)=1728$ and $\End E =\Z[\sqrt{-1}]$.
The group $\Galois_k$ acts trivially on $E[2](k_s)$,
so by Theorem~\ref{T:ordinary over infinite field}, 
the functors $\HOM_R(-,E)$ and $\Hom(-,E)$
are \emph{not} inverse equivalences of categories.
\end{example}

\begin{example}
Let $E$ be the elliptic curve $y^2=x^3+x^2-3x+1$ 
over $k \colonequals \Q(\sqrt{-2})$;
then $j(E)=8000$ and $\End E =\Z[\sqrt{-2}]$.
The field $k(E[2])$ equals $k(\sqrt{-1})$, so the image of $\Galois_k$
in $\GL_2(\F_2)$ has order $2$ and hence does not consist of scalars.
Now consider a prime $\ell>2$.
Choose a prime $p \ne \ell$ such that $p$ splits in $k/\Q$ and
$\left(\frac{p}{\ell}\right)=-1$.
Let $\sigma$ be a Frobenius element of $\Galois_k$ at a prime above $p$.
The image of $\sigma$ in $\Aut(E[\ell])\simeq \GL_2(\F_\ell)$
has nonsquare determinant $(p \bmod \ell)$,
so it is not a scalar.
Thus, by Theorem~\ref{T:ordinary over infinite field},
the functors $\HOM_R(-,E)$ and $\Hom(-,E)$
\emph{are} inverse equivalences of categories.
\end{example}

\begin{remark}
If $E$ and $E'$ are ordinary elliptic curves over an algebraically closed field $k$
and their endomorphism rings are orders in the same quadratic field,
then $E$ and $E'$ are isogenous.
But over non-algebraically closed fields, this can fail.
For example, if $E$ is an ordinary elliptic curve over a finite field,
then its quadratic twist $E'$ has the same endomorphism ring,
but opposite trace of Frobenius, so $E$ and $E'$ are not isogenous.
\end{remark}

\subsection{\texorpdfstring{\protect{\boldmath{$E$}}}{E} is supersingular and
  \texorpdfstring{\protect{\boldmath{$k=\F_p$}}}{k = Fp}}
\label{S:p+1 points}

 Fix a supersingular elliptic curve $E$ over $\F_p$.
Let $R\colonequals \End E$.
Let $P(x)$ be the characteristic polynomial of $\pi\colonequals \pi_E$.
Define $f_A$ as in Section~\ref{S:rank 2}.
In particular, $f_E$ is the conductor of $R$.
We have the following cases:
\begin{center}
\begin{tabular}{cccc|cc}
prime & $P(x)$ & $\Z[\pi]$ & $R=\End E$ & $f_E$ & equivalence? \\ \hline
$p \not\equiv 3 \pmod{4}$ & $x^2+p$ & $\Z\left[\sqrt{-p}\right]$ & 
$\Z\left[\sqrt{-p}\right]$ & $1$ & YES \\
$p \equiv 3 \pmod{4}$ & $x^2+p$ & $\Z\left[\sqrt{-p}\right]$ & 
$\Z\left[\sqrt{-p}\right]$ & $2$ &YES \\
$p \equiv 3 \pmod{4}$ & $x^2+p$ & $\Z\left[\sqrt{-p}\right]$ & 
$\Z\left[\frac{1+\sqrt{-p}}2\right]$ & $1$ & NO \\
$p=2$ & $x^2 \pm 2x + 2$ & $\Z[i]$ & $\Z[i]$ & $1$ & YES \\
$p=3$ & $x^2 \pm 3x + 3$ & $\Z\left[\frac{-1+\sqrt{-3}}2\right]$ &
$\Z\left[\frac{-1+\sqrt{-3}}2\right]$  & $1$ & YES \\
\end{tabular}
\end{center}
\bigskip

\noindent The last column, which indicates when 
$\HOM_R(-,E)$ and $\Hom(-,E)$
are equivalences of categories,
is explained by the following analogues of 
Theorems \ref{T:equivalence of categories for CM curves} and~\ref{T:ordinary elliptic curve},
proved in the same way except that we use
Proposition~\ref{P:p-power in supersingular}\eqref{I:supersingular over F_p}
in place of Proposition~\ref{P:p-power in ordinary}.

\begin{theorem}
\label{T:equivalence of categories for supersingular over F_p}
 Fix a supersingular elliptic curve $E$ over $\F_p$.
Let $R\colonequals \End E$.
The image of $\HOM_R(-,E)$
consists of the abelian varieties $A$ isogenous to a power of $E$
such that $f_A|f_E$, i.e., such that $R \subseteq Z(\End A)$.
These are exactly the products of elliptic curves $E'$
each isogenous to $E$ and satisfying $f_{E'}|f_E$.
\end{theorem}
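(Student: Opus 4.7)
The plan is to transport the proof of Theorem~\ref{T:equivalence of categories for CM curves} to the supersingular-over-$\F_p$ setting, making exactly one substitution: at the step treating the $p$-primary part of the kernel of an isogeny, I will invoke Proposition~\ref{P:p-power in supersingular}\eqref{I:supersingular over F_p} in place of Proposition~\ref{P:p-power in ordinary}. The rest of the argument transfers unchanged because $E$ being supersingular over $\F_p$ forces $R = \End E$ to be an order of rank~$2$ in an imaginary quadratic field (as recorded in the table preceding the statement), so for every prime $\ell \neq p$ we remain in the rank-$2$ case of Section~\ref{S:prime-to-p subgroups}, and in particular $R_\ell = C$.

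For the forward containment, I would take an abelian variety $A$ isogenous to $E^r$ with $f_A \mid f_E$ (equivalently $R \subseteq Z(\End A)$) and fix an isogeny $\phi \colon E^r \rightarrow A$ respecting the $R$-actions on source and target, exactly as in the CM-case proof. Setting $G \colonequals \ker\phi$, the group $G(k_s)$ then inherits an $R$-module structure, and decomposes by $\ell$-primary components as $G = G_p \directsum \bigoplus_{\ell \neq p} G_\ell$. For each $\ell \neq p$, the $R$-module structure on $G_\ell(k_s)$ makes it a $C/\ell^e C$-submodule of $E[\ell^e]^r(k_s)$ (since $R_\ell = C$), so Proposition~\ref{P:C/ell^e C-submodule is kernel subgroup} identifies $G_\ell$ as a kernel subgroup; for $\ell = p$, Proposition~\ref{P:p-power in supersingular}\eqref{I:supersingular over F_p} gives directly that $G_p$ is a kernel subgroup, with no equivariance needed. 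The ideals $(\ell^e) \subseteq R$ associated to distinct primes are pairwise coprime (and remain so as two-sided ideals of $\End E^r$), so Proposition~\ref{P:kernel subgroups}\eqref{I:sum of kernel subgroups} assembles the $G_\ell$ into a kernel subgroup $G$, and Proposition~\ref{P:kernel subgroups and quotients}(i)$\Rightarrow$(iii) places $A \isom E^r/G$ in the image of $\HOM_R(-,E)$.

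For the reverse containment together with the product description, I would invoke Theorem~\ref{T:maximal order equivalence}\eqref{I:essential image consists of products}: every object in the image of $\HOM_R(-,E)$ is a finite product of elliptic curves of the form $E' = \HOM_R(I,E)$ for a nonzero left $R$-ideal $I$. Full faithfulness of $\HOM_R(-,E)$ identifies $\End E' \isom \End_R I$, which contains $R$ because $R$ is commutative, so each factor $E'$ is isogenous to $E$ with $f_{E'} \mid f_E$. Taking the product then forces $R \subseteq Z(\End A)$, i.e., $f_A \mid f_E$.

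The single delicate point is the construction of an $R$-equivariant $\phi$ used in the forward direction; this is the same technicality present in Theorem~\ref{T:equivalence of categories for CM curves} and relies on $R$ being commutative with $R \subseteq Z(\End A)$, so that the appropriate element of $\Hom(E^r, A) \otimes \Q$ can be found and then cleared of denominators. Apart from that, the argument is a faithful transcription of the CM-case proof, with the $p$-part being if anything simpler than before, since Proposition~\ref{P:p-power in supersingular}\eqref{I:supersingular over F_p} delivers the conclusion for $G_p$ without any $R$-module hypothesis.
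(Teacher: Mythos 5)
Your proposal is correct and takes essentially the same approach as the paper: the paper itself states that Theorem~\ref{T:equivalence of categories for supersingular over F_p} is ``proved in the same way'' as Theorem~\ref{T:equivalence of categories for CM curves}, substituting Proposition~\ref{P:p-power in supersingular}\eqref{I:supersingular over F_p} for Proposition~\ref{P:p-power in ordinary} at the $p$-primary step, and you have reproduced that argument faithfully. One tiny inaccuracy in your commentary: Proposition~\ref{P:p-power in ordinary} also applies to \emph{every} $p$-power order subgroup scheme without an $R$-module hypothesis, so the $p$-part here is not actually ``simpler than before''---it is the same kind of unconditional statement; this does not affect the validity of the proof.
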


\begin{theorem}
\label{T:supersingular E over F_p}
 Fix a supersingular elliptic curve $E$ over $\F_p$.
Let $R\colonequals \End E$.
Then $\HOM_R(-,E)$ and $\Hom(-,E)$
are equivalences of categories
if and only if $\Z[\pi_E]=R$.
\end{theorem}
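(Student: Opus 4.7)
The plan is to mimic the proof of Theorem~\ref{T:ordinary elliptic curve} verbatim, replacing invocations of Theorem~\ref{T:equivalence of categories for CM curves} with the already-established Theorem~\ref{T:equivalence of categories for supersingular over F_p}. So the content of the argument is entirely about translating ``$\Z[\pi_E] = R$'' into ``$f_A \mid f_E$ for every $A$ isogenous to a power of $E$'' and recognizing the failure of this in the one remaining case.

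For the ``if'' direction, assume $\Z[\pi_E]=R$ and let $A$ be isogenous to some $E^r$. By \cite[Theorem~1(c)]{Tate1966} the characteristic polynomial of $\pi_A$ equals that of $\pi_{E^r}$, so $\Z[\pi_A]$ is isomorphic as a ring to $\Z[\pi_E]=R$. Since $\pi_A$ is central in $\End A$, we get $R \isom \Z[\pi_A] \subseteq Z(\End A)$, which says exactly that $f_A \mid f_E$. Theorem~\ref{T:equivalence of categories for supersingular over F_p} then places $A$ in the image of $\HOM_R(-,E)$, and Theorem~\ref{T:maximal order equivalence}\eqref{I:fully faithful},\eqref{I:inverse on essential image} finishes the proof that the pair of functors is an equivalence.

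For the ``only if'' direction, suppose $\Z[\pi_E] \subsetneq R$. Inspecting the table preceding the theorem, this forces $p \equiv 3 \pmod 4$, $R$ equal to the maximal order $\Z\!\left[\frac{1+\sqrt{-p}}{2}\right]$, and $\pi_E = \pm \sqrt{-p}$, so $\Z[\pi_E] = \Z[\sqrt{-p}]$ has index $2$ in $R$. I then invoke Waterhouse's classification \cite[Theorem~4.2]{Waterhouse1969} to produce a second supersingular elliptic curve $E'/\F_p$, isogenous to $E$, with $\End E' = \Z[\sqrt{-p}] = \Z[\pi_{E'}]$. For this $E'$ one has $f_{E'} = 2 \nmid 1 = f_E$, so Theorem~\ref{T:equivalence of categories for supersingular over F_p} says $E'$ does \emph{not} lie in the image of $\HOM_R(-,E)$; in particular, the functor is not essentially surjective, so not an equivalence.

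The main point requiring care is the second paragraph above: one needs (a) the inspection of the table confirming that $\Z[\pi_E] \subsetneq R$ can occur only in the one listed case, and (b) the existence of an isogenous $E'$ whose endomorphism ring is the \emph{smaller} order $\Z[\pi_{E'}]$. Both are provided by Waterhouse's theorem, but (b) is really the crux: without a concrete isogenous curve with strictly smaller endomorphism ring, one cannot exhibit an object outside the image of $\HOM_R(-,E)$. Everything else is bookkeeping in the spirit of Theorem~\ref{T:ordinary elliptic curve}.
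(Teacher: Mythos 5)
Your proof is correct and follows exactly the paper's route: the paper states that Theorem~\ref{T:supersingular E over F_p} is proved ``in the same way'' as Theorem~\ref{T:ordinary elliptic curve} (with Proposition~\ref{P:p-power in supersingular}\eqref{I:supersingular over F_p} substituted for Proposition~\ref{P:p-power in ordinary} inside the proof of Theorem~\ref{T:equivalence of categories for supersingular over F_p}), and that proof is precisely the two-step argument you gave --- deducing $f_A\mid f_E$ from $\Z[\pi_E]=R$ in one direction, and invoking Waterhouse's Theorem~4.2 to produce an isogenous $E'$ with $\End E'=\Z[\pi_{E'}]\subsetneq R$ in the other. Your added inspection of the table is harmless extra detail, not a change of method.
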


\subsection{\texorpdfstring{\protect{\boldmath{$E$}}}{E} is supersingular,
  \texorpdfstring{\protect{\boldmath{$k=\F_{p^2}$, and $\rk \End E = 4$}}}{k = Fp2, and rk End E = 4}} 
\label{S:maximal minimal}

 In this case, $E$ is a maximal or minimal elliptic curve over $\F_{p^2}$.
These cases were already handled: see Theorem~\ref{T:category of maximal abelian varieties}.

\subsection{\texorpdfstring{\protect{\boldmath{$E$}}}{E} is supersingular, \texorpdfstring{\protect{\boldmath{$k=\F_{p^2}$, and $\rk \End E = 2$}}}{k = Fp2, and rk End E = 2}}
\label{S:weird supersingular}

 By Proposition~\ref{P:p-power in supersingular}\eqref{I:alpha_p over F_{p^2}},
not every subgroup scheme is a kernel subgroup.
By Proposition~\ref{P:kernel subgroups and equivalence of categories}(iii)$\Leftrightarrow$(ii),
the functors $\HOM_R(-,E)$ and $\Hom(-,E)$
are not equivalences of categories.

\begin{remark}
 These are the cases in which the characteristic polynomial of $\pi_E$ 
is one of $x^2 + px + p^2$, $x^2 + p^2$, or $x^2 - px + p^2$.
Hence $\pi_E=p\zeta$ for a root of unity $\zeta$ of order $3$, $4$, or $6$, respectively.
But $p$ does not divide the conductor of $R$, so $\zeta \in R$.
Now $\zeta \in \Aut E$, so $E$ has $j$-invariant $0$ or $1728$.
\end{remark}

\subsection{\texorpdfstring{\protect{\boldmath{$E$}}}{E} is supersingular and \texorpdfstring{\protect{\boldmath{$\#k > p^2$}}}{\#k > p2}}
\label{S:supersingular over large field}

By Proposition~\ref{P:p-power in supersingular}\eqref{I:alpha_p over F_{p^3}},
not every subgroup scheme is a kernel subgroup.
By Proposition~\ref{P:kernel subgroups and equivalence of categories}(iii)$\Leftrightarrow$(ii),
the functors $\HOM_R(-,E)$ and $\Hom(-,E)$
are not equivalences of categories.

\section{A partial generalization to higher-dimensional abelian varieties over $\F_p$}
\label{S:CS}

Let $B$ be an abelian variety over a prime field $\F_p$.
Let $R \subseteq \End B$ be the (central) subring $\Z[F,V]$
generated by the Frobenius and Verschiebung endomorphisms.
Given a f.p.\ reflexive $R$-module $M$,
let $M^* \colonequals \Hom_R(M,R)$; then $M^*$ is reflexive too.

As in the case of elliptic curves, we can define functors 
\begin{align*}
	\HOM_R(-,B) \colon & \{ \textup{f.p.\ $R$-modules} \}^{\opp} \\
	& \rightarrow
	\{\textup{commutative proper group schemes over $\F_p$}\}
\end{align*}
and 
\begin{align*}
	\Hom(-,B) \colon 
	& \{\textup{commutative proper group schemes over $\F_p$}\} \\
	& \rightarrow
	\{ \textup{f.p.\ $R$-modules} \}^{\opp}.
\end{align*}
The work of Centeleghe and Stix \cite{Centeleghe-Stix2015}, 
combined with some further arguments,
allows us to analyze this higher-dimensional case.
The main extra ingredient we supply is that, under appropriate hypotheses, 
the functor $M \mapsto M^* \tensor_R B$ implicit in \cite{Centeleghe-Stix2015}
is isomorphic to $\HOM_R(-,B)$.

\begin{theorem}
\label{T:powers of B}
Let $B$ be an abelian variety over $\F_p$.
Let $R = \Z[F,V] \subseteq \End B$.
Then the functors $\HOM_R(-,B)$ and $\Hom(-,B)$ 
restrict to inverse equivalences of categories
\[
\xymatrix{
	\{ \textup{f.p.\ reflexive $R$-modules} \}^{\opp} \ar@<0.5ex>[r]
	&
	\{\textup{abelian variety quotients of powers of $B$}\} \ar@<0.5ex>[l]
}
\]
if and only if $R = \End B$.
Moreover, in this case, the functor $\HOM_R(-,B)$ so restricted is exact,
and it is isomorphic to the functor $M \mapsto M^* \tensor B$.
\end{theorem}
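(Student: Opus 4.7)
The plan is to proceed in four steps.

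Step 1 (the ``only if'' direction). Suppose $\HOM_R(-,B)$ and $\Hom(-,B)$ restrict to inverse equivalences. Applying the pair to the free $R$-module $R$ yields $\HOM_R(R,B)\isom B$ and then forces $\Hom(B,B)\isom R$ as $R$-modules. Since $\End B$ acts on $\Hom(B,B)$ by post-composition and gives $R$-module endomorphisms, and since every $R$-module endomorphism of $R$ is right-multiplication by some $r\in R$, each element of $\End B$ must coincide with some element of $R$. Hence $R=\End B$, exactly as in the paragraph preceding Theorem~\ref{T:maximal order equivalence}.

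Step 2 (the ``if'' direction via Centeleghe--Stix). Assume $R=\End B$. Centeleghe and Stix~\cite{Centeleghe-Stix2015} construct an anti-equivalence between a category of f.p.\ $\Z[F,V]$-modules and a category of commutative $\F_p$-group schemes containing $B$, its powers, and their quotients. The hypothesis $R=\End B$ ensures that this anti-equivalence restricts, on one side, to the category of f.p.\ reflexive $R$-modules, and, on the other, to the category of abelian variety quotients of powers of $B$. Granting the identification in Step~3, this delivers the equivalence asserted in the theorem.

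Step 3 (identification of functors). The crux is a natural isomorphism $\HOM_R(M,B)\isom M^*\tensor_R B$ for f.p.\ reflexive $M$. Both functors are contravariant additive and both send $R\mapsto B$, so they agree on finite free $R$-modules and on morphisms between them (both realize matrix transposition as a morphism between powers of $B$). For a general f.p.\ reflexive $M$, pick a presentation
\[
R^a \xrightarrow{\;P\;} R^b \To M^* \To 0.
\]
Dualizing and invoking $M\isom M^{**}$ gives an exact sequence $0\To M\To R^b \xrightarrow{\,P^T\,} R^a$. By Remark~\ref{R:tensor functor}, $M^*\tensor_R B=\coker(B^a\xrightarrow{P}B^b)$. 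I would show that $\HOM_R(M,B)$ is canonically isomorphic to this cokernel by splicing the co-presentation of $M$ with a presentation of its cokernel in $R^a$, then tracking matrices through both constructions. This is a direct generalization of the duality argument in the proof of Theorem~\ref{T:dual abelian variety}, with reflexivity playing the role that the Rosati involution played in the elliptic-curve case.

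Step 4 (exactness). Once $\HOM_R(-,B)$ is identified with the Centeleghe--Stix functor, it inherits the exactness built into their equivalence: short exact sequences of f.p.\ reflexive $R$-modules correspond under the anti-equivalence to short exact sequences of abelian varieties, so the functor is exact on the reflexive subcategory.

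The main obstacle will be Step~3: $\HOM_R(-,B)$ is defined from a presentation of $M$, whereas $M^*\tensor_R B$ is defined from a presentation of $M^*$, and for a reflexive module the two data are dual only up to choices of resolutions. Producing a canonical natural isomorphism requires a careful comparison of the matrices $P$ and $P^T$ acting on powers of $B$, as well as checking compatibility with the Centeleghe--Stix conventions; this is strictly stronger than Theorem~\ref{T:dual abelian variety}, since here $B$ is a higher-dimensional abelian variety over $\F_p$ and no polarization is assumed.
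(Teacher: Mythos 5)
There is a genuine gap in Steps~3 and~4, and the logical dependence is reversed. The paper's key technical contribution is to prove \emph{first} that $\HOM_R(M,B)$ is an abelian variety for every f.p.\ reflexive $M$, and only then deduce exactness (via Lemma~\ref{L:projective}) and the identification $\HOM_R(M,B)\isom M^*\tensor_R B$. Proving abelian variety-hood is done prime by prime: for $\ell\ne p$ via Tate modules and the vanishing $\Ext^1_R(M,R)=0$ from \cite{Centeleghe-Stix2015}, which shows $G[\ell^n]\isom M^*/\ell^n$; and for $\ell=p$ via contravariant Dieudonn\'e modules, using that $T_pB$ is free of rank~$1$ over $R\tensor\Z_p$ to get $G[p^n]^D\isom M/p^n$. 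Freeness of $M$ and $M^*$ over $\Z$ (which follows from reflexivity) then gives $\ell$-divisibility of $G$. Your proposal never addresses this; you assert in Step~4 that exactness is ``inherited'' from the Centeleghe--Stix equivalence once the functors are identified, but the identification you attempt in Step~3 already presupposes that $\HOM_R(-,B)$ is exact.

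More concretely, the ``matrix-tracking'' argument of your Step~3 cannot succeed as stated. The functor $\HOM_R(-,B)$ is constructed from a presentation $R^a\to R^b\to M\to 0$ and is \emph{a priori} only left exact, so applying it to an exact sequence $0\to M\to R^n\to R^m$ does not automatically produce a right-exact sequence $B^m\to B^n\to \HOM_R(M,B)\to 0$; that right-exactness is precisely what needs to be established via the abelian variety argument. Your analogy with Theorem~\ref{T:dual abelian variety} does not carry over: there the identification relied on taking dual abelian varieties (which is where exactness came from), whereas here no duality is available, and indeed no Rosati-type involution plays a role ($R=\Z[F,V]$ is commutative, and the comparison is via the plain $\Hom_R(-,R)$-dual). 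Without the abelian variety step, you also cannot conclude that the image of $\HOM_R(-,B)$ lands in the category $\AV_w$ of abelian variety quotients of powers of $B$ rather than merely in commutative proper group schemes. Finally, Step~2 glosses over why $\AV_w$ is exactly the category of abelian variety quotients of powers of $B$; this requires the observation that $R=\End B$ forces $(\End B)\tensor\Q$ to be commutative, so $B$ has no repeated simple isogeny factors and no factor with Weil number $\sqrt{p}$.

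Your Step~1 (the ``only if'' direction) matches the paper.
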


\begin{proof}
If the functors give inverse equivalences as stated,
then the argument in the paragraph before 
Theorem~\ref{T:maximal order equivalence}
proves that $R = \End B$.

Now let us prove the converse.
Suppose that $R=\End B$.
Then $(\End B) \tensor \Q$ is commutative.
This implies that in the decomposition of $B$
into simple factors up to isogeny,
no factor is repeated,
and also no factor is associated to the Weil number $\sqrt{p}$,
since such a factor would give a direct factor of $(\End B) \tensor \Q$ 
isomorphic to a quaternion algebra over $\Q(\sqrt{p})$: 
see \cite[p.~528, Case~2]{Waterhouse1969}.
Let $w$ be the set of Weil number conjugacy classes associated to $B$.
Then the category $\AV_w$ of \cite[5.1]{Centeleghe-Stix2015}
is the category of abelian variety quotients of powers of $B$.
The ring $R_w$ in \cite[Definition~2]{Centeleghe-Stix2015} is $R=\Z[F,V]$.
It is Gorenstein by \cite[Theorem~11(2)]{Centeleghe-Stix2015}.
Reflexive finitely generated $R$-modules are the same 
as f.p.\ torsion-free $R$-modules,
or equivalently f.p.\ $R$-modules that are free over $\Z$ 
\cite[Lemma~13]{Centeleghe-Stix2015}.
By \cite[Proposition~24]{Centeleghe-Stix2015}, for every prime $\ell$ 
the $(R \tensor \Z_\ell)$-module $T_\ell B$
(Tate module or contravariant Dieudonn\'e module)
is free of rank~$1$,
so the abelian variety $A_w$ in \cite[Proposition~21]{Centeleghe-Stix2015}
may be taken to be $B$ by \cite[Proposition~24]{Centeleghe-Stix2015}.

We now check that if $M$ is a f.p.\ torsion-free $R$-module,
then the commutative proper group scheme $G \colonequals \HOM_R(M,B)$ 
is an abelian variety.
It suffices to prove that for every prime $\ell$ and $n \ge 0$,
the homomorphism $G[\ell^{n+1}] \stackrel{\ell}\ra G[\ell^n]$ is surjective.
Choose a presentation $R^a \stackrel{N}\ra R^b \ra M \ra 0$,
so $G \colonequals \ker (B^b \ra B^a)$.
Both $M$ and $M^*$ are reflexive $R$-modules,
so they are free over $\Z$.

Suppose that $\ell \ne p$.
Then
\begin{align*}
G[\ell^n] &= (\ker (B^b \ra B^a))[\ell^n] \\
	&= \ker(B^b[\ell^n] \ra B^a[\ell^n]) \\
	&\isom \ker(T_\ell(B^b)/\ell^n \ra T_\ell(B^a)/\ell^n) \\
	&= \ker((R/\ell^n)^b \stackrel{N^T}\ra (R/\ell^n)^a) \\
	&= \ker(\Hom_R(R^b,R/\ell^n) \stackrel{N^T}\ra \Hom_R(R^a,R/\ell^n)) \\
	&\isom \Hom_R(M,R/\ell^n) \\
	& \isom M^*/\ell^n \quad \textup{(since $\Ext^1_R(M,R)=0$ by \cite[Lemma~17]{Centeleghe-Stix2015})}\\
	&= M^* \underset{\Z}\tensor \frac{\ell^{-n}\Z}{\Z}.
\end{align*}
Since $M^*$ is free over $\Z$,
the homomorphism
\[
	M^* \underset{\Z}\tensor \frac{\ell^{-(n+1)}\Z}{\Z} 
	\stackrel{\ell}\longrightarrow
	M^* \underset{\Z}\tensor \frac{\ell^{-n}\Z}{\Z}
\]
is surjective, so $G[\ell^{n+1}] \stackrel{\ell}\ra G[\ell^n]$ is surjective.

Now suppose that $\ell=p$.
For each commutative group scheme $H$ over $\F_p$,
let $H^D$ denote its contravariant Dieudonn\'e module.
Since the $R \tensor \Z_p$-module $T_p B$ is free of rank~$1$,
we have $B[p^n]^D \isom R/p^n$ as an $R$-module.
Next, 
\begin{align*}
G[p^n]^D &= \coker\left( B^a[p^n]^D \ra B^b[p^n]^D \right) \\
	&\isom \coker\left( (R/p^n)^a \stackrel{N}\ra (R/p^n)^b \right) \\
	&= M/p^n.
\end{align*}
Since $M$ is free over $\Z$,
the homomorphism $M/p^n \stackrel{p}\ra M/p^{n+1}$ is injective,
so $G[p^n]^D \stackrel{p}\ra G[p^{n+1}]^D$ is injective,
so $G[p^{n+1}] \stackrel{p}\ra G[p^n]$ is surjective.

Thus $G$ is an abelian variety.
The proof of Theorem~\ref{T:functor for E}\eqref{I:Hom is exact}
now shows that $\HOM_R(-,B)$ is exact.
In particular, if 
$0 \ra M \ra R^n \ra R^m$ is an exact sequence of $R$-modules,
then 
$B^m \ra B^n \ra \HOM_R(M,B) \ra 0$ is exact.
But $M^* \tensor B$ too is defined as $\coker(B^m \ra B^n)$,
so $\HOM_R(M,B) \isom M^* \tensor_R B$,
and this holds functorially in $M$.

Finally, by \cite[Theorem~25 and p.~247]{Centeleghe-Stix2015},
\[
	\Hom(-,B) \colon 
	\AV_w \To \{ \textup{f.p.\ reflexive $R$-modules} \}^{\opp}
\]
is an equivalence of categories with inverse functor
$M \mapsto M^* \tensor_R B$.
We may replace the latter with the isomorphic functor $\HOM_R(-,B)$.
\end{proof}

\begin{remark}
Over $\F_{p^n}$ with $n>1$,
the functors $\HOM_R(-,B)$ and $\Hom(-,B)$ 
are sometimes inverse equivalences of categories,
and sometimes not,
as we saw already in the case of elliptic curves: 
see Theorem~\ref{T:elliptic curve over finite field}.
\end{remark}

\section*{Acknowledgments} 
It is a pleasure to thank Everett Howe, Tony Scholl, and 
Christopher Skinner for helpful discussions. 
We thank also the referees for valuable suggestions on the exposition.

\end{document}